\title[Calderon-Zygmund type estimates for nonlocal PDE]{Calderon-Zygmund type estimates for nonlocal PDE with H\"older continuous kernel}
\author{Tadele Mengesha}
\address[Tadele Mengesha]{Department of Mathematics, The University of Tennessee, Knoxville, 204 Ayres Hall, 1403 Circle Drive
Knoxville, TN, 37996.
}
\email{mengesha@utk.edu}
\author{Armin Schikorra}
\address[Armin Schikorra]{Department of Mathematics,
University of Pittsburgh,
301 Thackeray Hall,
Pittsburgh, PA 15260, USA}
\email{armin@pitt.edu}
\author{Sasikarn Yeepo}
\address[Sasikarn Yeepo]{Department of Mathematics and Computer Science, Faculty of Science, Chulalongkorn University, Bangkok 10330, Thailand}
\email{6072857023@student.chula.ac.th}
\def\eps{\varepsilon}
\def\N{{\mathbb N}}
\renewcommand{\div}{\operatorname{div}}
\newcommand{\subsubset}{\subset\subset}
\newtheorem{theorem}{Theorem}
\newtheorem{lemma}[theorem]{Lemma}
\newtheorem{corollary}[theorem]{Corollary}
\newtheorem{proposition}[theorem]{Proposition}
\theoremstyle{definition}
\newtheorem{definition}[theorem]{Definition}
\def\diam{{\rm diam\,}}
\def\dist{{\rm dist\,}}
\def\lip{{\rm Lip\,}}
\def\supp{{\rm supp\,}}
\newcommand{\R}{\mathbb{R}}
\newcommand{\brac}[1]{\left (#1 \right )}
\newcommand{\abs}[1]{\left |#1 \right |}
\newcommand{\barint}{
\rule[.036in]{.12in}{.009in}\kern-.16in \displaystyle\int }
\newcommand{\barcal}{\mbox{$ \rule[.036in]{.11in}{.007in}\kern-.128in\int $}}
\def\mvint_#1{\mathchoice
          {\mathop{\vrule width 6pt height 3 pt depth -2.5pt
                  \kern -8pt \intop}\nolimits_{\kern -3pt #1}}%
          {\mathop{\vrule width 5pt height 3 pt depth -2.6pt
                  \kern -6pt \intop}\nolimits_{#1}}%
          {\mathop{\vrule width 5pt height 3 pt depth -2.6pt
                  \kern -6pt \intop}\nolimits_{#1}}%
          {\mathop{\vrule width 5pt height 3 pt depth -2.6pt
                  \kern -6pt \intop}\nolimits_{#1}}}
\numberwithin{theorem}{section} \numberwithin{equation}{section}
\newcommand{\lap}{\Delta }
\newcommand{\aleq}{\lesssim}
\newcommand{\ageq}{\gtrsim}
\newcommand{\aeq}{\approx}
\newcommand{\Rz}{\mathcal{R}}
\newcommand{\laps}[1]{(-\lap) ^{\frac{#1}{2}}}
\newcommand{\lapms}[1]{I^{#1}}
\begin{document}

\begin{abstract}
We study interior $L^p$-regularity theory, also known as Calderon-Zygmund theory, of the equation
\[
 \int_{\R^n} \int_{\R^n} \frac{K(x,y)\ (u(x)-u(y))\, (\varphi(x)-\varphi(y))}{|x-y|^{n+2s}}\, dx\, dy = \langle f, \varphi \rangle \quad \varphi \in C_c^\infty(\R^n).
\]
For $s \in (0,1)$, $t \in  [s,2s]$, $p \in [2,\infty)$, $K$ an elliptic, symmetric, \emph{H\"older continuous} kernel, if $f \in \brac{H^{t,p'}_{00}(\Omega)}^\ast$, then the solution $u$ belongs to $H^{2s-t,p}_{loc}(\Omega)$ as long as $2s-t < 1$.

The increase in differentiability and integrability is independent of the H\"older coefficient of $K$. For example, our result shows that if $f\in L^{p}_{loc}$ then $u\in H^{2s-\delta,p}_{loc}$ for any $\delta\in (0, s]$ as long as $2s-\delta < 1$. This is different than the classical analogue of divergence-form equations $\div(\bar{K} \nabla u) = f$ (i.e. $s=1$) where a $C^\gamma$-H\"older continuous coefficient $\bar{K}$ only allows for estimates of order $H^{1+\gamma}$. In fact, it is another appearance of the differential stability effect observed in many forms by many authors for this kind of nonlocal equations -- only that in our case we do not get a ``small'' differentiability improvement, but all the way up to $\min\{2s-t,1\}$.

The proof argues by comparison with the (much simpler) equation
\[
 \int_{\R^n} K(z,z) \laps{t} u(z) \, \laps{2s-t} \varphi(z)\, dz = \langle g,\varphi\rangle \quad \varphi \in C_c^\infty(\R^n).
\]
and showing that as long as $K$ is H\"older continuous and $s,t, 2s-t \in (0,1)$ then the ``commutator''
\[
 \int_{\R^n} K(z,z) \laps{t} u(z) \, \laps{2s-t} \varphi(z)\, dz  - c\int_{\R^n} \int_{\R^n} \frac{K(x,y)\ (u(x)-u(y))\, (\varphi(x)-\varphi(y))}{|x-y|^{n+2s}}\, dx\, dy
\]
behaves like a lower order operator.

\end{abstract}

\maketitle
{\tiny 
\tableofcontents
}

\section{Introduction and statement of main results}
In this article, we develop the Calderon-Zygmund theory for a popular nonlocal equation 
\begin{equation}\label{eq:nonloc}
\mathcal{L}^{s}_{\Omega}u = f, 
\end{equation}
where $\Omega\subseteq \R^{n}$ is an open set, $s \in (0,1)$, and the operator $\mathcal{L}^{s}_\Omega$ is formally given by
\[
\mathcal{L}^{s}_{\Omega}u(x) :=  P.V. \int_{\Omega} K(x,y)\frac{u(x)-u(y)}{|x-y|^{n+2s}}dy.
\]
The ``coefficient of $\mathcal{L}^{s}_{\Omega}$'' is $K:\R^{n}\times\R^{n}\to \R$, and it is assumed to be measurable, symmetric, and bounded. Moreover we assume $K$ to be bounded from below on the diagonal by a positive number, $\inf_x K(x,x) > 0$, which corresponds to ellipticity.

In the event that $K=1$ and $\Omega= \R^{n}$, the operator $\mathcal{L}^{s}_{\Omega}$ corresponds to the well-known fractional Laplacian operator $(-\Delta)^{s}$. 

The main objective of this paper is to address the question of regularity of such a solution $u$ relative to the data $f$.

Before we state our main theorem, \Cref{th:main}, we need some definitions. We say that $K$ satisfies a uniform H\"older continuity assumption if there exists $\alpha\in (0, 1)$, $\Lambda>0$ such that
\begin{equation}\label{H-Continuity}  
\sup_{z\in \R^n} |K(z,y)-K(z,x)| \leq \Lambda\, |x-y|^\alpha, \quad \text{for $x, y\in \R^{n}$}. 
\end{equation}
For given positive numbers  $\lambda, \Lambda$ and $\alpha\in(0, 1)$, define the class of elliptic coefficients  
\[
\mathcal{K}(\alpha,\lambda, \Lambda) = \left\{K: K(x,y) = K(y,x), \inf_{x \in \R^n} K(x,x) > \lambda, \|K\|_{L^{\infty}} <\frac{1}{\lambda}\,\text{and satisfies \eqref{H-Continuity}} \right\}.
\]
We also need to introduce relevant  
differential operators as  well  as function spaces. 
Let $\mathcal{F}$ denote the Fourier transform. For $s > 0$ the fractional Laplacian $\laps{s}$  is defined as the operator that for $f$ in the Schwartz class acts as multiplier with symbol $c|\xi|^s$
\begin{equation}\label{eq:fraclapFT}
 \mathcal{F}(\laps{s} f)(\xi) = c\, |\xi|^s \mathcal{F} f(\xi). 
\end{equation}
The constant $c$ depends on $n$ and $s$ and plays no deeper role in the theory that we consider.  Next we will introduce two types of fractional Sobolev spaces that we need to state the main result: Bessel potential spaces $H^{s,p}$ and Besov spaces $W^{s,p}$. 
For $1<p<\infty,$ the Bessel potential spaces $H^{s,p}(\R^{n})$ are defined as follows: $f \in H^{s,p}(\R^n)$ if $f \in L^p(\R^n)$ and $\laps{s} f \in L^p(\R^n)$. The associated norm is
\[
 \|f\|_{H^{s,p}(\R^n)} := \|f\|_{L^p(\R^n)} + \|\laps{s} f\|_{L^p(\R^n)}.
\]
The Besov spaces $W^{s,p}(\Omega)$, for $s \in (0,1)$, are induced by the semi-norm (called Sobolev-Slobodeckij or Gagliardo norm)
\[
 [f]_{W^{s,p}(\Omega)} = \brac{\int_{\Omega} \int_{\Omega} \frac{|f(x)-f(y)|^{p}}{|x-y|^{n+sp}}\, dx\, dy}^{\frac{1}{p}},
\]
and $\|\cdot\|_{W^{s,p}(\Omega)} = \| \cdot\|_{L^{p}(\Omega)} +  [\cdot]_{W^{s,p}(\Omega)}$ serves as a norm.  For $p=2$, $W^{s,2}(\R^{n}) = H^{s,2}(\R^{n})$, for $p < 2$ we have $W^{s,p}(\R^n) \subsetneq H^{s,p}(\R^{n})$ and for $p>2$ we have $H^{s,p}(\R^n) \subsetneq W^{s,p}(\R^{n})$. These spaces are particular examples of the more general Triebel-Lizorkin spaces and $F^{s}_{pp}(\R^{n}) = W^{s,p}(\R^{n})$ and $F^s_{p,2}(\R^{n}) = H^{s,p}(\R^{n})$, see \cite{RS96}. 


For $u\in W^{s, 2}(\Omega),$  we define the map $\mathcal{L}^{s}_{\Omega}$ by 
\begin{equation}\label{def-op-Omega}
\langle\mathcal{L}^{s}_{\Omega}u, \varphi\rangle  :=  \int_{\Omega} \int_{\Omega} K(x,y)\frac{(u(x)-u(y))\, (\varphi(x)-\varphi(y))}{|x-y|^{n+2s}}\, dx\, dy. 
\end{equation}
for any $\varphi\in W^{s,2}(\Omega)$. 
It  is not difficult to show that if $K\in L^{\infty}(\Omega\times \Omega)$, then for any $u\in W^{s,2}(\Omega)$,
$\mathcal{L}^{s}_{\Omega}u\in (W^{s,2}(\Omega))^{*}. $ 

We now define precisely what we mean by a solution to our equation of interest, \eqref{eq:nonloc}. 
\begin{definition}\label{Def-loc-sol}
Let  $s\in(0, 1)$ and $t\in [s, 2s)$. Suppose that $f_1, f_2\in L^{2}(\R^n)$.   We  say $u\in W^{s, 2}(\Omega)$ is a distributional solution of 
\begin{equation}\label{Def-loc-sol-eqn}
 \mathcal{L}^{s}_{\Omega}u = \laps{2s-t} f_1 + f_2  \quad \text{in $\Omega_1$}
\end{equation}
for some $\Omega_1 \subseteq \Omega$ if for any $\varphi\in C_c^{\infty}(\Omega_1)$, it holds that 
\[
\langle\mathcal{L}^{s}_{\Omega}u, \varphi\rangle=\int_{\R^{n}}  f_1\laps{2s-t} \varphi\, dx  + \int_{\R^{n}} f_2 \varphi\, dx. 
\]
\end{definition}
If $\Omega$ is bounded or $\Omega=\R^{n}$, the notion of solution introduced in Definition \ref{Def-loc-sol} coincides with the classical notion of weak solution. Moreover, for $\Omega=\R^{n}$ and for any  bounded open subset $\Omega_1$, given $f_1, f_2\in L^{2}(\R^n)$, a solution to \eqref{Def-loc-sol-eqn} exists with additional assumption on $u$. For example, a minimizer of the energy 
\[
\mathcal{E}(u) := \frac{1}{2} \langle\mathcal{L}^{s}_{\R^{n}}u,u\rangle + \int_{\R^{n}}  f_1\, \laps{2s-t} u dx  + \int_{\R^{n}} f_2\, u dx
\]
 over $\{u\in H^{s,2}(\R^{n}): u=0 \quad \text{on $\R^{n}\setminus \Omega_1$}\}$ exists and is a solution to \eqref{Def-loc-sol-eqn} in the sense of \Cref{Def-loc-sol-eqn}. 
 
We also notice that \eqref{Def-loc-sol-eqn} is often thought as the nonlocal (fractional) analogue of the weak formulation of the elliptic differential equation 
\begin{equation}\label{classical-eqn}
\div(A(\cdot) \nabla u) = \div{{h} } + g.
\end{equation}
The question of regularity of weak solutions $u$ to \eqref{classical-eqn} in relation to the regularity of data (the coefficient $A$, the right hand sides ${h}$ and $g$) is decades old.  One line of regularity theory is the Calderon-Zygmund regularity theory where higher integrability of the gradient $\nabla u$ of the solution $u$ is sought in relation to higher integrability of ${h}$ and $g$. The now well-known $W^{1,p}$-theory proves that  for a possibly rough coefficient $A(x)$ but with small mean oscillation, for any $1<p<\infty$, if ${h}\in L^{p}_{loc}$ and $g$ is, say, smooth, then $\nabla u\in L^{p}_{loc}(\R^{n})$ \cite{IS98}. Another line of regularity focuses on the differentiability of $\nabla u$ and this is intimately related to the smoothness of the coefficient $A(x)$ in \eqref{classical-eqn}. In fact, the $W^{2,p}$-theory states that if $A$ is Lipschitz continuous, and $g\in L^p_{loc}(\R^{n})$, say ${h}$ is smooth, then the weak solution  $u$ of \eqref{classical-eqn} is twice differentiable and $D^{2}u\in L^{p}_{loc}$, \cite[Theorem 9.11]{GT01}\footnote{observe that the statement of \cite[Theorem 9.11]{GT01} is in non-divergence form with bounded order coefficients. To transform a divergence form equation to an nondivergence form equation with bounded coefficients, the original coefficients should be Lipschitz}.

The main objective of this paper is to prove regularity results of the above type for distributional solutions ${u}$ of nonlocal equations such as \eqref{Def-loc-sol-eqn}. Although the conditions we put are different,  the spirit of the results is similar in the sense that we are looking for higher differentiability in the fractional Sobolev scale and higher integrability of the solution $u$ as a function of data $f_1$ and $f_2$ in \eqref{Def-loc-sol-eqn}.  The following theorem states the main result of the paper.

\begin{theorem}\label{th:main}
Let  $s\in(0, 1)$ and $s \leq t<\min\{ 2s, 1\}$. 
If for $2\leq q<\infty$, $f_1,f_2 \in L^q(\Omega)\cap L^2(\R^n)$, and  $u\in  W^{s, 2}(\Omega)$  is  a distributional solution of
\[
\langle\mathcal{L}^{s}_{\Omega}u, \varphi\rangle=\int_{\R^{n}}  f_1\laps{2s-t} \varphi\, dx  + \int_{\R^{n}} f_2 \varphi\, dx \quad \forall \varphi \in C_c^\infty(\Omega_1), 
\]
for some $\Omega_1 \subseteq \Omega$ in  the sense of Definition \ref{Def-loc-sol} with $\mathcal{L}^{s}_{\Omega}$ corresponding to $K\in \mathcal{K}(\alpha, \lambda, \Lambda)$ for some given $\alpha\in (0, 1)$ and $\lambda, \Lambda>0$, then for any $W^{s,2}$-extension $\tilde{u}$ of $u$ to $\R^n$ we have $\laps{t} \tilde{u} \in L^q_{loc}(\Omega_1)$ and for any $\Omega' \subsubset \Omega_1$  we have 
\[
 \|\laps{t} u\|_{L^q(\Omega')} \leq C \brac{\|u\|_{W^{s,2}(\Omega)} + \sum_{i=1}^2 \|f_i\|_{L^q(\Omega_1)}+\|f_i\|_{L^2(\R^n)} }.
\]
The constant  $C$ depends only $s$,$t$,$q$,$\alpha$, $\lambda$, $\Lambda$,$\Omega$, and $\Omega'$.  
\end{theorem}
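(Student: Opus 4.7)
As previewed in the abstract, the plan is to compare $\mathcal{L}^s_\Omega$ with the ``frozen-diagonal'' bilinear form $u \mapsto \int_{\R^n} K(z,z)\,\laps{t} u\, \laps{2s-t} \varphi\, dz$ and then invoke Calderon-Zygmund theory (Riesz potential estimates, Bessel potential mapping properties) for the frozen operator, which up to the bounded, uniformly positive scalar factor $K(z,z)$ is just a fractional power of the Laplacian. The argument breaks into three pieces: (i) localization of the equation to $\Omega'$, (ii) a commutator estimate identifying the difference between $\mathcal{L}^s_\Omega$ and the frozen form as strictly lower order, and (iii) a bootstrap in the integrability exponent $q$.

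First I would localize. Given $\Omega'\subsubset\Omega_1$, pick an intermediate $\Omega''$ with $\Omega'\subsubset\Omega''\subsubset\Omega_1$ and a cutoff $\eta\in C_c^\infty(\Omega'')$ with $\eta\equiv 1$ on $\Omega'$. Setting $g:= f_1 + \lapms{2s-t}f_2$, I would test the equation against $\laps{2s-t}\varphi$ for $\varphi\in C_c^\infty(\Omega')$ and then add/subtract the frozen form to reach an identity schematically of the form
\[
 \int K(z,z)\,\laps{t}\tilde u(z)\, \laps{2s-t}\varphi(z)\, dz = \int g\,\varphi\, dz + \mathcal{C}(\tilde u,\varphi) + \mathcal{T}(\tilde u,\varphi),
\]
where $\mathcal{C}$ is the commutator appearing in the abstract and $\mathcal{T}$ collects tail contributions coming from the cutoff and from $\tilde u$ away from $\Omega'$. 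The tails are controlled using standard decay estimates of $\laps{2s-t}$ on functions supported away from $\Omega'$ together with $\|\tilde u\|_{W^{s,2}}$, and are routine.

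The central and, I expect, hardest step is the commutator estimate. Schematically the target is
\[
 |\mathcal{C}(u,\varphi)| \aleq \|u\|_{H^{s+\sigma,p}(\R^n)}\, \|\varphi\|_{H^{2s-t-\sigma,p'}(\R^n)}
\]
for some $\sigma\in(0,\alpha)$, which exhibits $\mathcal{C}$ as a bilinear form of strictly lower order than the principal frozen form. The heuristic is that in the double-integral representation of $\mathcal{C}$ one has $|K(x,y)-K(z,z)|\aleq |x-y|^\alpha$, killing $\alpha$ orders of the $|x-y|^{-n-2s}$ singularity. A rigorous proof should split near- vs.\ far-diagonal regions, exploit the Bessel / Littlewood-Paley characterization $F^s_{p,2}=H^{s,p}$ recalled in the introduction, and use fractional Leibniz (Kato-Ponce)-type bounds to re-expand $K(x,y)-K(z,z)$ in a form amenable to Plancherel. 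The delicate point emphasized in the abstract is that one should not lose regularity to the small Hölder exponent $\alpha$ of $K$: the gain on $u$ must be all the way up to $\min\{2s-t,1\}$, independently of $\alpha$, which is why a naive absorption of $\alpha$ derivatives into the kernel is insufficient and a sharper commutator analysis is needed.

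Once the commutator estimate is in place, the displayed identity becomes a variable-coefficient fractional Laplace equation $K(z,z)\,\laps{2s}\tilde u = g + (\text{lower order})$ on $\Omega'$. Standard $L^q$ mapping properties of the Riesz potential $\lapms{2s-t}$, combined with the two-sided bounds on $K(z,z)$, yield $\laps{t}\tilde u\in L^q(\Omega')$ modulo terms of strictly lower differential order in $\tilde u$. A finite bootstrap in $q$ — starting from the $L^2$ information supplied by $u\in W^{s,2}(\Omega)$, improving integrability by a definite amount at each step thanks to the strict order gain $\sigma>0$ in the commutator estimate, and shrinking the domain $\Omega'\subsubset\cdots\subsubset\Omega_1$ at each iteration — reaches any prescribed $q\in[2,\infty)$ in finitely many steps, and keeping track of the cutoffs along the way produces the stated dependence of the constant $C$ on $s,t,q,\alpha,\lambda,\Lambda,\Omega,\Omega'$.
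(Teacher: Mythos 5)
Your overall architecture (freeze the kernel on the diagonal, show the difference is a commutator of lower order, then bootstrap) is the same as the paper's, but the central step as you formulate it does not hold, and your reasoning about it inverts the actual mechanism. You posit a commutator bound $|\mathcal{C}(u,\varphi)| \aleq \|u\|_{H^{s+\sigma,p}(\R^n)}\|\varphi\|_{H^{2s-t-\sigma,p'}(\R^n)}$ for some $\sigma\in(0,\alpha)$, and you assert that "naive absorption of $\alpha$ derivatives into the kernel is insufficient" because the gain must reach $\min\{2s-t,1\}$ independently of $\alpha$. Two problems. First, a commutator gain exceeding $\alpha$ is simply not available: the only smallness in $D_{s,t}$ comes from $|K(x,y)-K(z,z)|$, which a $C^\alpha$ kernel controls by H\"older quantities of order $\alpha$ at best — exactly as in the Coifman--Rochberg--Weiss analogy the paper invokes. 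The estimate the paper actually proves (\Cref{th:reformulationcommie}) gains only $\sigma-\eps$ with $\sigma<\sigma_0\leq\alpha$, of the form $|D_{s,t}(u,\varphi)|\aleq \int_{\R^n}\lapms{\sigma-\eps}|\laps{t-\eps}u|\,|\laps{2s-t}\varphi|$. Second, as written your bound has total differential order $3s-t$: when $t=s$ there is no gain at all (it even places more derivatives on $u$ than the frozen form, so the bootstrap cannot start), and when $t>s$ the implied gain $t-s$ is independent of $\alpha$, which is precisely what a merely $C^\alpha$ kernel cannot deliver. The $\alpha$-independence of the final theorem is not the output of a single sharp commutator estimate; in the paper it comes from iterating the small, $\alpha$-dependent gain finitely many times — it is the number of iterations, not the endpoint, that depends on $\alpha$.

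A related gap: your bootstrap runs only in the integrability exponent $q$, but the theorem also requires climbing the differentiability from the a priori level $s$ (where only $\laps{s}u\in L^2$ is known) up to $t$, possibly close to $\min\{2s,1\}$. With a per-step gain of order at most $\alpha$ this forces an iteration in differentiability as well (the paper's \Cref{th:slighincrease} improves both exponents by increments $\eps$ depending on $\alpha$, using \Cref{pr:regularityKlapls:2} for the differentiability step), and after each local improvement one must convert the local information back into a global equation with controlled right-hand sides before the commutator theorem (stated for $u\in H^{t,p}(\R^n)$ and the global operator $\mathcal{L}^s_{\R^n}$) can be applied again — this is the cutoff/extension machinery of \Cref{th:reduction}, which your sketch compresses into "routine" tail estimates. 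Note finally that your claimed commutator bound requires $\|u\|_{H^{s+\sigma,p}}<\infty$, which is not part of the a priori data, so even the first step of your scheme is circular without this incremental structure.
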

We used the notation $A \subsubset B$ when $A$ and $B$ are open and the closure of $A$  is compact and is a subset of $B$.

Let us highlight some corollaries of \Cref{th:main} that might appear in applications. For the proofs we refer to \Cref{s:corollaries}.
\begin{corollary}\label{co:main2}
Let  $s\in(0, 1)$ and $s\leq t <\min\{1, 2s\})$.  If for  $q\geq 2$,  $u\in  W^{s, 2}(\Omega)$  is  a distributional solution of 
 \[
 \langle \mathcal{L}^{s}_{\Omega} u, \varphi\rangle = 
 \int_{\Omega}f\, \varphi\,dx,  \quad \forall \varphi \in C_c^\infty(\Omega),
 \]
with $\mathcal{L}^{s}_{\Omega}$ corresponding to $K\in \mathcal{K}(\alpha, \lambda, \Lambda)$ for some given $\alpha\in (0, 1)$ and $\lambda, \Lambda>0$. Then for any $W^{s,2}$-extension $\tilde{u}$ of $u$ to $\R^n$,  $\laps{t} \tilde{u} \in L^q_{loc}(\Omega)$, and for any $\Omega' \subsubset \Omega$ we have 
\[
 \|\laps{t} \tilde{u}\|_{L^q(\Omega')} \leq C \brac{\|f\|_{L^q(\Omega)} + \|\tilde{u}\|_{W^{s,2}(\R^n)}}.
\]
\end{corollary}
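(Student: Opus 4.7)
The plan is to derive \Cref{co:main2} directly from \Cref{th:main} by localizing the right-hand side. The only mismatch between the two statements is that \Cref{th:main} insists on an $L^2(\R^n)$-datum, whereas the corollary only provides $f \in L^q(\Omega)$ with no global $L^2$-control; this is resolved by a standard cutoff, applied with the choices $f_1 \equiv 0$ and $f_2 = \eta f$.

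Concretely, fix $\Omega' \subsubset \Omega$ and pick an intermediate open set $\Omega_1$ with $\Omega' \subsubset \Omega_1 \subsubset \Omega$. I would take $\eta \in C_c^\infty(\Omega)$ with $\eta \equiv 1$ on a neighborhood of $\overline{\Omega_1}$ and set $f_2 := \eta f$, $f_1 := 0$. Since $\operatorname{supp}\eta$ is a compact subset of $\Omega$ and $q \geq 2$, Hölder's inequality gives
\[
 \|f_2\|_{L^2(\R^n)} \leq |\operatorname{supp}\eta|^{\frac{1}{2}-\frac{1}{q}} \|f\|_{L^q(\Omega)},
\]
while $|\eta|\leq 1$ yields $\|f_2\|_{L^q(\Omega_1)} \leq \|f\|_{L^q(\Omega)}$. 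For any $\varphi \in C_c^\infty(\Omega_1)$ the cutoff satisfies $\eta \equiv 1$ on $\operatorname{supp}\varphi$, hence $\int_\Omega f\varphi\,dx = \int_{\R^n} f_2 \varphi\,dx$, and the corollary's hypothesis shows that $u$ is a distributional solution on $\Omega_1$ in the sense of \Cref{Def-loc-sol} with this $f_1, f_2$.

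\Cref{th:main} then applies and yields, for any $W^{s,2}$-extension $\tilde u$,
\[
 \|\laps{t}\tilde u\|_{L^q(\Omega')} \leq C\brac{\|u\|_{W^{s,2}(\Omega)} + \|f_2\|_{L^q(\Omega_1)} + \|f_2\|_{L^2(\R^n)}} \leq C'\brac{\|\tilde u\|_{W^{s,2}(\R^n)} + \|f\|_{L^q(\Omega)}},
\]
where in the last step I combine the two bounds on $f_2$ with $\|u\|_{W^{s,2}(\Omega)} \leq \|\tilde u\|_{W^{s,2}(\R^n)}$ (which is immediate since $\tilde u = u$ on $\Omega$). As $\Omega'\subsubset\Omega$ is arbitrary, $\laps{t}\tilde u \in L^q_{loc}(\Omega)$ follows. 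No genuine obstacle arises beyond this bookkeeping: once the cutoff is in place to address the integrability gap, the corollary is a direct reading of \Cref{th:main}.
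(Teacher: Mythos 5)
Your proof is correct and takes essentially the same route as the paper, which simply records \Cref{co:main2} as an immediate consequence of \Cref{th:main}; your choice $f_1\equiv 0$, $f_2=\eta f$ is precisely the bookkeeping needed to meet the $L^q(\Omega)\cap L^2(\R^n)$ hypothesis of that theorem. The H\"older bound $\|\eta f\|_{L^2(\R^n)}\leq |\operatorname{supp}\eta|^{\frac12-\frac1q}\|f\|_{L^q(\Omega)}$ and the check that the equation holds for all $\varphi\in C_c^\infty(\Omega_1)$ are both fine, so nothing is missing.
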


\begin{corollary}\label{co:maindual}
Let  $s\in(0, 1)$ and $s \leq t<\min\{ 1, 2s\}$. For any open set $\Omega \subset \R^n$, $2 \leq q < \infty$ the following holds.

If $f \in (H^{2s-t,q'}(\Omega))^\ast$ and $u\in  W^{s, 2}(\Omega)$  is  a distributional solution of
\[
\langle\mathcal{L}^{s}_{\Omega}u, \varphi\rangle=\langle f, \varphi \rangle \quad \forall \varphi \in C_c^\infty(\Omega)
\]
in  the sense of Definition \ref{Def-loc-sol} with $\mathcal{L}^{s}_{\Omega}$ corresponding to $K\in \mathcal{K}(\alpha, \lambda, \Lambda)$ for some given $\alpha\in (0, 1)$ and $\lambda, \Lambda>0$. Then for any $W^{s,2}$-extension $\tilde{u}$ of $u$ to $\R^n$ we have $\laps{t} \tilde{u} \in L^q_{loc}(\Omega)$ and for any $\Omega' \subsubset \Omega$  we have 
\[
 \|\laps{t} u\|_{L^q(\Omega')} \leq C \brac{\|u\|_{W^{s,2}(\Omega)} + \|f\|_{(H^{2s-t,q'}(\Omega))^\ast}}
\]
The constant  $C$ depends only $s$,$t$,$q$,$\alpha$, $\lambda$, $\Lambda$,$\Omega$, and $\Omega'$.  
\end{corollary}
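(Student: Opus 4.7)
The plan is to reduce \Cref{co:maindual} to \Cref{th:main} by representing the functional $f$ in the form $\laps{2s-t} F_1 + F_2$ with $F_1,F_2\in L^q(\Omega)\cap L^2(\R^n)$ compactly supported inside $\Omega$, and then invoking the main theorem directly.

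First I would perform a global duality decomposition. Since the norm on $H^{2s-t,q'}(\R^n)$ is $\|v\|_{L^{q'}}+\|\laps{2s-t}v\|_{L^{q'}}$, the map $v\mapsto (v,\laps{2s-t}v)$ isometrically embeds $H^{2s-t,q'}(\R^n)$ into $L^{q'}(\R^n)\oplus L^{q'}(\R^n)$. Extending $f$ to a functional on $H^{2s-t,q'}(\R^n)$ of comparable norm (using the restriction-norm definition of $H^{2s-t,q'}(\Omega)$ or Hahn-Banach), then applying Hahn-Banach on the image of the embedding together with the Riesz representation in $L^{q'}\oplus L^{q'}$, produces $f_1,f_2\in L^q(\R^n)$ such that
\[
 \langle f,\varphi\rangle = \int_{\R^n} f_1\,\laps{2s-t}\varphi\, dx + \int_{\R^n} f_2\,\varphi\, dx \qquad \forall\, \varphi\in C_c^\infty(\Omega),
\]
with $\|f_1\|_{L^q(\R^n)}+\|f_2\|_{L^q(\R^n)} \lesssim \|f\|_{(H^{2s-t,q'}(\Omega))^\ast}$. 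These representatives only give global $L^q$ integrability, which is insufficient for \Cref{th:main} (which also needs $L^2(\R^n)$ control).

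Next I would localize. Given $\Omega'\subsubset \Omega$ pick nested open sets $\Omega'\subsubset \Omega_1\subsubset \Omega_0\subsubset \Omega$ and set
\[
 F_1 := \chi_{\Omega_0}\, f_1,\qquad F_2 := \chi_{\Omega_0}\, f_2 + \chi_{\Omega_1}\, \laps{2s-t}\bigl((1-\chi_{\Omega_0})\, f_1\bigr).
\]
A short computation using the self-adjointness of $\laps{2s-t}$ and $\supp \varphi\subset \Omega_1\subset \Omega_0$ shows that $\langle f,\varphi\rangle = \int F_1\,\laps{2s-t}\varphi\, dx + \int F_2\,\varphi\, dx$ for all $\varphi\in C_c^\infty(\Omega_1)$. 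Moreover, because $\dist(\Omega_1,\R^n\setminus \Omega_0)>0$, the singular-integral representation of $\laps{2s-t}((1-\chi_{\Omega_0})f_1)$ at points $x\in \Omega_1$ reduces to integrating $f_1$ against the kernel $y\mapsto |x-y|^{-(n+2s-t)}\chi_{\R^n\setminus\Omega_0}(y)$, which belongs to $L^{q'}(\R^n)$ uniformly in $x\in\Omega_1$ (here $q<\infty$ and $2s-t>0$ are both needed). Consequently $F_2 \in L^\infty(\Omega_1)$, and both $F_1,F_2$ are compactly supported in $\Omega_0$; hence they lie in $L^q(\Omega)\cap L^2(\R^n)$ with norms bounded by $C\,\|f\|_{(H^{2s-t,q'}(\Omega))^\ast}$.

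Finally, applying \Cref{th:main} with the data $(F_1,F_2)$ and inner set $\Omega_1$ yields
\[
 \|\laps{t}\tilde u\|_{L^q(\Omega')} \leq C\brac{\|u\|_{W^{s,2}(\Omega)} + \|f\|_{(H^{2s-t,q'}(\Omega))^\ast}},
\]
which is the claim of \Cref{co:maindual}. I expect the main subtle point to be the localization step, i.e.\ verifying that the truncation of $f_1$ contributes only a controllable nonlocal correction; this is handled by the explicit kernel estimate above. The remaining steps amount to routine extraction of $L^q/L^2$ data from the dual-space hypothesis.
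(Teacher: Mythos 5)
Your argument is correct, but it reaches \Cref{th:main} by a different route than the paper. The paper first localizes the \emph{functional}: it sets $\langle \tilde f,\varphi\rangle:=\langle f,\eta\varphi\rangle$ for a cutoff $\eta\in C_c^\infty(\Omega)$ with $\eta\equiv 1$ on an intermediate set $\Omega''$, uses the fractional Leibniz rule to see $\tilde f\in\brac{H^{2s-t,q'}(\R^n)}^\ast$ and, exploiting $q'\leq 2$ together with the compact support of $\eta$, also $\tilde f\in\brac{H^{2s-t,2}(\R^n)}^\ast$; it then invokes \Cref{pr:dualclass} to produce $f_1,f_2\in L^q\cap L^2(\R^n)$ representing $\tilde f$, and applies \Cref{th:main} on $\Omega'\subsubset\Omega''$. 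You instead represent $f$ globally first (via the embedding $v\mapsto(v,\laps{2s-t}v)$, Hahn--Banach and $L^{q'}$-duality, which is in substance \Cref{pr:dualclass}(1)), obtaining only $L^q(\R^n)$ data, and then localize the \emph{representatives}: truncating $f_1$ to $\chi_{\Omega_0}f_1$ and absorbing the error $\chi_{\Omega_1}\laps{2s-t}\brac{(1-\chi_{\Omega_0})f_1}$, which the disjoint-support kernel bound (exactly the device used in Step 0 of the proof of \Cref{th:main2} and in \Cref{prop:disjoint-support}) shows is bounded on $\Omega_1$ by $C\|f_1\|_{L^q(\R^n)}$. Your route buys something concrete: the required $L^2(\R^n)$ integrability of the data comes for free from compact support and $q\geq 2$, so you never need a single pair of representatives that is simultaneously controlled in $L^q$ and $L^2$ — a point the paper's appeal to \Cref{pr:dualclass} for two different dual spaces leaves somewhat implicit — and you avoid the fractional Leibniz rule altogether. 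The paper's route, in exchange, avoids your truncation/Fubini computation by doing all cutting at the level of the functional. Both arguments then conclude identically by \Cref{th:main}, and your norm bookkeeping (all constants depending on $\Omega_0,\Omega_1,\Omega',q,s,t,n$) is consistent with the stated dependence of $C$.
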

\begin{corollary}\label{co:main3}
Let  $s\in(0, 1)$ and $p\geq 2$. Let $\Omega \subsubset \R^n$ be a smoothly bounded set, and let $\Omega_1 \subsubset \Omega$ be open. Assume that $u \in W^{s,2}(\Omega)$ satisfies 
\begin{equation}\label{eq:Lsudivs}
 \langle \mathcal{L}^{s}_{\Omega} u, \varphi\rangle = 
 \int_{\Omega}\int_{\Omega} \frac{(f(x)-f(y))\, (\varphi(x)-\varphi(y))}{|x-y|^{n+2s}}\, dx\, dy 
 \end{equation}
 for any $\varphi\in C_c^{\infty}(\Omega_{1})$, where $\mathcal{L}^{s}_{\Omega}$ corresponds to $K\in \mathcal{K}(\alpha, \lambda, \Lambda)$ for some given $\alpha\in (0, 1)$ and $\lambda, \Lambda>0$. Then if for $s < t_0 < \min\{2s,1\}$, $f\in W^{t_0, p}(\Omega)$
then for any $s \leq t < t_0$, $u\in W^{t, p}_{loc}(\Omega)$, and for any $\Omega_1 \subset \Omega$ we have the estimate
\[
 [u]_{W^{t,p}(\Omega_1)} \leq C\, \brac{[f]_{W^{t_0,p}(\Omega)} + [u]_{W^{s,2}(\Omega)}}+\|u\|_{L^2(\Omega)}.
\]
\end{corollary}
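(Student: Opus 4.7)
The plan is to reduce \Cref{co:main3} to \Cref{th:main} by rewriting the right-hand side of \eqref{eq:Lsudivs} in the form $\int f_1\,\laps{2s-t}\varphi + \int f_2\,\varphi$ appearing in \Cref{Def-loc-sol}. Since $\Omega$ is smoothly bounded and $t_0 < 1$, I would first invoke a bounded extension operator $W^{t_0,p}(\Omega) \to W^{t_0,p}(\R^n)$ to fix a compactly supported extension $\tilde f \in W^{t_0,p}(\R^n)$ of $f$; compact support together with $p\geq 2$ also puts $\tilde f$ in $L^2(\R^n)$.

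For any $\varphi \in C_c^\infty(\Omega_1)$, the difference between integrating over $\Omega\times\Omega$ versus $\R^n\times\R^n$ is supported on $(\R^n\setminus\Omega)\times\Omega_1$ (the remaining contributions vanish because $\varphi\equiv 0$ outside $\Omega_1$), and, using the positive distance between $\Omega_1$ and $\R^n\setminus\Omega$, this tail can be written as $2\int_{\Omega_1} F_2(y)\,\varphi(y)\,dy$ with
\[
F_2(y) := \int_{\R^n\setminus\Omega}\frac{\tilde f(x)-\tilde f(y)}{|x-y|^{n+2s}}\,dx
\]
bounded on $\Omega_1$. The full-space Gagliardo integral equals $c(n,s)\int \laps{s}\tilde f\,\laps{s}\varphi\,dx = c(n,s)\int \laps{t}\tilde f\,\laps{2s-t}\varphi\,dx$ by Plancherel and the semigroup property of $\laps{\cdot}$. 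Setting $f_1 := c(n,s)\,\laps{t}\tilde f$ and $f_2 := F_2\,\chi_{\Omega_1}$ then puts \eqref{eq:Lsudivs} exactly into the form required by \Cref{Def-loc-sol-eqn}.

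It remains to verify $f_1, f_2 \in L^p(\Omega_1) \cap L^2(\R^n)$ and apply \Cref{th:main} with $q = p$. For $f_2$ both conditions follow from boundedness and compact support. For $f_1$ the key input is the Triebel-Lizorkin embedding $W^{t_0,p}(\R^n) = F^{t_0}_{p,p}(\R^n) \hookrightarrow F^{t}_{p,2}(\R^n) = H^{t,p}(\R^n)$, valid for any $t < t_0$ when $p\geq 2$; this gives $\laps{t}\tilde f \in L^p(\R^n) \cap L^2(\R^n)$. \Cref{th:main} then yields $\laps{t}\tilde u \in L^p_{loc}(\Omega_1)$, and the companion embedding $F^t_{p,2} \hookrightarrow F^t_{p,p}$ (again for $p\geq 2$) transfers this to $u \in W^{t,p}_{loc}(\Omega)$ with the claimed seminorm estimate after collecting constants. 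The most delicate point is precisely the Triebel-Lizorkin embedding responsible for the strict inequality $t < t_0$: the $\epsilon$-loss of differentiability reflects the gap between the Besov-type data space $W^{t_0,p}$ and the Bessel-potential conclusion of \Cref{th:main}, and it is this gap that prevents the endpoint $t = t_0$.
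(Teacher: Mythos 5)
Your argument is correct in substance, but it takes a genuinely different route from the paper. The paper deduces \Cref{co:main3} from the more general \Cref{co:main4} (with $F(x,y)=\frac{f(x)-f(y)}{|x-y|^s}$): after a cutoff, the right-hand side of \eqref{eq:Lsudivs} is treated as an abstract bounded linear functional on a Bessel space of test functions, the loss $t<t_0$ entering through the Sobolev/Triebel--Lizorkin embedding on the \emph{test-function} side, and the dual-space characterization \Cref{pr:dualclass} then produces $f_1,f_2\in L^p(\R^n)$ without ever extending $f$; this abstract route also covers general kernels $F(x,y)$. You instead extend $f$ to a compactly supported $\tilde f\in W^{t_0,p}(\R^n)$, convert the full-space Gagliardo pairing into $c\int \laps{t}\tilde f\,\laps{2s-t}\varphi$ by \eqref{eq:fraclap} and Plancherel, place the loss $t<t_0$ on the \emph{data} side via $F^{t_0}_{p,p}\hookrightarrow F^{t}_{p,2}$, and absorb the mismatch between $\Omega\times\Omega$ and $\R^n\times\R^n$ into an explicit disjoint-support term $f_2$. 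This is more concrete and constructive, at the price of using the special structure of $F$ (so, unlike the paper's argument, it would not yield \Cref{co:main4}).

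Three small repairs are needed. First, $F_2$ is in general \emph{not} bounded on $\Omega_1$: for $y\in\Omega_1$ one only gets $|F_2(y)|\lesssim \|\tilde f\|_{L^1(\R^n)}+|\tilde f(y)|$, since $f$ itself need not be bounded; this still gives $f_2\in L^p(\Omega_1)\cap L^2(\R^n)$ (compact support, $p\geq 2$), which is all that \Cref{th:main} requires, so the conclusion is unaffected. Second, your bounds for $f_1,f_2$ come with $\|\tilde f\|_{W^{t_0,p}(\R^n)}\lesssim\|f\|_{W^{t_0,p}(\Omega)}$, whereas the stated estimate contains only the seminorm $[f]_{W^{t_0,p}(\Omega)}$; since \eqref{eq:Lsudivs} is invariant under $f\mapsto f-(f)_\Omega$, you should subtract the mean before extending and invoke Poincar\'e. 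Third, passing from $\laps{t}\tilde u\in L^p_{loc}(\Omega_1)$ to the local seminorm $[u]_{W^{t,p}}$ requires a localization (a cutoff as in \Cref{la:cutoffarg}, or working with a slightly larger exponent $t<t'<t_0$) before applying the global embedding $H^{t,p}(\R^n)\hookrightarrow W^{t,p}(\R^n)$ for $p\geq 2$, since that embedding does not apply directly to a function whose $\laps{t}$ is only locally in $L^p$; also note that the embedding $W^{t_0,p}\hookrightarrow H^{t,p}$ for $t<t_0$ holds for all $p\in(1,\infty)$, so the restriction $p\geq 2$ is really used in \Cref{th:main} itself and in this last step, not where you attribute it.
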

We observe that \Cref{co:main2} is to some extent an analogue of the local $W^{2,p}$-theory for divergence-form equations such as \eqref{classical-eqn}. However, there is one major difference: while the higher fractional differentiability of solutions for local equations of the form \eqref{classical-eqn} is \emph{closely related} to the smoothness of the coefficient $A$,  for nonlocal equations of the form \eqref{Def-loc-sol-eqn} it is only \emph{loosely related} to the smoothness of the coefficient $K$. 

Namely, for local equations, if $\div(A \nabla u) \in L^p$ and $A \in C^\alpha$, then $u \in W^{s,q}_{loc}$ for any $s < 1+\alpha$ and $2-s -\frac{n}{p} > -\frac{n}{q}$.
That is, the increase in differentiability of the solution depends on the relative smoothness of the coefficient, the $\alpha$-H\"older continuity of $A$. In the case of solutions to the  nonlocal equation in Definition \ref{Def-loc-sol}, the increase on differentiability of $u$ is independent of the measure of H\"older continuity of the coefficient $K$. In other words, as long as $K$ is H\"older continuous of any order $\alpha\in (0, 1)$, the solution can be proved to be differentiable up to the order of $\min\{1, 2s\}$. 

This presents one of the distinctions of our work from that of the regularity result obtained in \cite{gale_ofa485955384} (which considers $L^2$-regularity). In \cite{gale_ofa485955384}, the almost optimal regularity of solution to \eqref{Def-loc-sol} corresponding to $f_1=0,$ and $f_2\in L^{2}_{loc}(\R^n)$ is obtained under the assumption that $K\in C^{s} (\R^{n}\times\R^{n})$. Using this smoothness assumption on $K$, which allows the application of the ``difference quotient'' method of proving higher differentiability,  in \cite{gale_ofa485955384} the solution $u$ is shown to belong to $H^{2s-\epsilon,2}_{loc}(\R^{n})$ for any $\epsilon>0$.  

For right-hand sides in $L^2$ we get similar \emph{differentiability} results to \cite{gale_ofa485955384}, but at most up to differential order $1$. However, we merely assume $K$ to be $C^\alpha$-H\"older continuous for some $\alpha > 0$ possibly much smaller than $s$, and $K$ only needs to be be positive on the diagonal. An example for a 
kernel that belongs to $\mathcal{K}(\alpha, \lambda, \Lambda)$ but does not fit the framework given in \cite{gale_ofa485955384} is $K(x, y) = \frac{2\lambda + |x|^{\alpha}+ |y|^{\alpha}}{\lambda + |x|^{\alpha}+ |y|^{\alpha}}+ 10^{6}{(\sin x + \sin y)}\frac{|x-y|^{\alpha}}{(1 + |x-y|^{\alpha})}$. Observe that for small $\lambda >0$, $K$ could be negative off the diagonal $\{x=y\}$. 


Local elliptic regularity results for weak solutions to the Dirichlet problem associated
with the fractional Laplacian is also studied in \cite{Warma}. 

Let us also mention the recent work \cite{Nowak19}, where nonlocal equations of the type \eqref{Def-loc-sol-eqn} are studied for translation invariant coefficients, $K(x, y) = K(x-y)$. In this work, without imposing any smoothness assumption on $K(x-y)$,  and using a real-analytic perturbation argument pioneered in \cite{alma9925334574502311} and expanded in \cite{alma992559020102311} to obtain $W^{1,p}$-estimates, it was shown that if $f_1\in L^{p}_{loc}(\R^{n})$, and $f_2\in L^{\frac{pn}{n+sp}}_{loc}(\R^{n})$, then any weak solution $u$ to \eqref{Def-loc-sol-eqn} is in $H^{s,p}_{loc}(\R^{n})$. This result in \cite{Nowak19} concerns only the higher integrability of $\laps{s}u$, whereas, in  comparison, our work presents results on both higher differentiability and higher integrability of $\laps{s}u$ for solutions of nonlocal equations corresponding to coefficients that are not necessarily translation invariant. 

We should also mention that for ``strong solutions'' of nonlocal equations of the type $(\mathcal{L}^{s}_{\R^{n}} + \gamma \mathcal{I})u = f$ corresponding to translation invariant coefficients, $K(x, y) = K(x-y)$, and $\gamma >0$ the optimal regularity theory of $f\in L^{p}(\R^{n}) \implies u\in H^{2s, p}(\R^{n})$ is obtained in \cite{DONG20121166}. Similar to the previous paper discussed, the result in \cite{DONG20121166} requires no smoothness assumption on $K(x-y)$ and relies on a priori mean-oscillation estimates and maximal function theorem.  

Other types of improved regularity results have also been observed for weak solution of nonlocal equations of type \eqref{Def-loc-sol-eqn} with coefficients $K(x, y)$ that are just measurable, elliptic and bounded from above. What is called a self-improvement property of such solutions, which was first obtained in \cite{KMS15} via a generalized Gehring lemma, states that  for $f_1\in H^{s+\epsilon}$ and $f_2\in L^{\frac{2n}{n + 2s}}_{loc},$ a weak solution $u\in H^{s, 2}(\R^{n})$ is in fact in $W^{s+\delta, 2+\delta}_{loc}(\R^{n})$. While the improvement in integrability of the solutions is expected, the incremental improvement in differentiability without requiring any smoothness assumption on the coefficient $K$ is unique to nonlocal equations of this type.  
Intuitively, one can see why such improvement can be possible. In fact, that for any $s_1,s_2 \in (0,1)$ with $s_1+s_2 = 2s$ we have
\[
\begin{split}
\langle\mathcal{L}^{s}_{\R^{n}}u, \varphi \rangle \leq& \|K\|_{L^\infty} 
\brac{ \int_{\R^n} \int_{\R^n} \frac{|u(x)-u(y)|^p}{|x-y|^{n+s_1p}}\, dx\, dy}^{\frac{1}{p}}\, \brac{\int_{\R^n} \int_{\R^n} \frac{|\varphi(x)-\varphi(y)|^{p'}\, }{|x-y|^{n+s_2p'}}\, dx\, dy}^{\frac{1}{p'}}
\end{split}
\]
That is, there is a possibility that one can distribute derivatives freely on test functions or the solution. This is clearly false for the local case unless $s_1=s_2 = 1$.
\[
 \int_{\R^n} A(x) \partial_\alpha u\, \partial_\alpha \varphi \not \aleq \|A\|_{L^\infty}\, \|u\|_{\dot{H}^{s_1,p}}\, \|\varphi\|_{\dot{H}^{s_2,p'}}.
\]
The self-improving property of nonlocal equations have also been demonstrated via other approaches: via functional analytic approach in \cite{euclideuclid.tunis/1543854680} and via comparison and commutator estimates in \cite{S16}. This kind of $\delta$-differential flexibility of nonlocal equations has also been observed and crucially used in the regularity theory of geometric equations \cite{S15,BRS2}.

Finally, we comment on our strategy of proving \Cref{th:main}. Our argument relies  on comparing the leading order operator in \eqref{eq:nonloc}, $\mathcal{L}^{s}_{\R^{n}}$ with that of the simpler operator $ L^{s,t}_{diag}$ defined as
\begin{equation}\label{defn-Ldiag}
\langle L^{s,t}_{diag}u, \varphi\rangle :=\int_{\R^n} K(z,z) \laps{t} u(z)\, \laps{2s-t}\varphi(z)\, dz, 
\end{equation}
for all $\varphi\in C_c^{\infty}(\R^{n})$ and $s\leq t < 1.$ To facilitate comparison of the operators, let us define the difference function 
\[
 D_{s,t}(u,\varphi) :=  \langle \mathcal{L}^{s}_{\R^{n}}u, \varphi\rangle - \Gamma \langle L^{s,t}_{diag}u, \varphi\rangle.
\]
Here $\Gamma$ is the constant (depending on $s$, $t$, and $n$) such that $D_{s,t}(u,\varphi) \equiv 0$ for all $u$ and $\varphi$ admissible whenever the coefficient $K$ is a constant map. In this sense, $D_{s,t}(u,\varphi)$ can be seen as a commutator $\int [T,K]u\, \varphi$ which is the main intuition in what follows. Indeed we obtain in \Cref{th:reformulationcommie} a quantitative estimate for $D_{s,t}$ that shows that in the case of H\"older continuous $K$, the commutator is \emph{of lower order}. Intuitively, the operator  $D_{s,t}(u,\varphi)$ gives us the mechanism to 'transfer derivatives' to $K$ which along the way reduces the number of derivatives on $u$ and $\varphi$. The commutator estimate we state in \Cref{th:reformulationcommie} is similar in spirit to the Coifman-Rochberg-Weiss commutator $[T,K](f)$ where $T$ is a Calderon-Zygmund operator. If $K$ is H\"older continuous of order $\gamma$,   then $[T,K](f)$ can be estimated by a Riesz potential $\lapms{\gamma} f$ of $f$ (i.e. a fractional antiderivative) -- this is exactly what we obtain for our commutator $D_{s,t}$ in \Cref{th:reformulationcommie}. 
While such a quantitative estimate is almost obvious for the Coifman-Rochberg-Weiss commutator it is already involved for our situation. Observe, however that a consequence of the famous work \cite{CRW} Coifman, Rochberg, Weiss is that the operator $f\mapsto [T,K](f)$ is a compact operator for $K$ in $VMO$, \cite{Uchiyama}. This suggests that with some work there could be a version of our theorem for $K$ in $VMO$ (in a suitable sense yet to be defined). We will investigate this direction in a future work.

Once we identify $D_{s,t}(u,\varphi)$  as a lower-order operator, we can essentially read of the regularity theory for the operator in \Cref{th:main} from the regularity theory of equations of the type
\begin{equation}\label{diagonal-coef-eqn}
\langle L^{s,t}_{diag}u, \varphi\rangle= \int g \varphi \quad \forall \varphi \in C_c^\infty(\R^n),
\end{equation}
which is relatively easy to handle. Notice that \eqref{diagonal-coef-eqn} is a distributional formulation of the elliptic equation $\laps{2s-t}(K(z,z) \laps{t} u) = g$. Thus, formally, \eqref{diagonal-coef-eqn} is equivalent to
\[
 \laps{t} u(x) = \frac{1}{K(x,x)}\lapms{2s-t} g(x),
\]
and thus one expects the estimate 
\[
 \|\laps{t} u\|_{L^q(\R^n)} \leq \frac{1}{\inf_{x} K(x,x)}\, \|\lapms{2s-t} g\|_{L^q(\R^n)}.
\]
In particular, if $g \in L^q(\R^n)$, then by Sobolev embedding $\lapms{2s-t} g \in L^{\frac{nq}{n-(2s-t)q}}(\R^n)$; that is, if $u$ solves \eqref{diagonal-coef-eqn} and $g \in L^q(\R^n)$, then $u \in H^{s,\frac{nq}{n-(2s-t)q}}_{loc}(\R^n)$. 

The precise argument is based on a duality argument and a bit tedious, but in the end we obtain the following result in Section~\ref{s:fraclap}.
\begin{theorem}\label{th:regularityKlapls:1}
Let $s>0$ and $t \in (0,2s)$. Assume that for some $q \in (1,\infty)$, $\laps{t} u \in L^q(\R^n)$ is a distributional solution to
\[
 \int_{\R^n} \bar{K}(z) \laps{t} u\, \laps{2s-t} \varphi = \int_{\R^n} f_1\, \laps{2s-t} \varphi + \int_{\R^n} f_2\, \varphi\quad \forall \varphi \in C_c^\infty(\Omega).
\]
Here $\bar{K}: \R^n \to \R$ is a positive, measurable, and bounded from above and below, i.e.
\[
\Lambda^{-1} \leq \bar{K}(z) \leq  \Lambda \quad \text{a.e. }x \in \R^n.
\]
Then for any $\Omega' \subsubset \Omega \subsubset \R^n$ if $f_1,f_2 \in  L^q(\R^n) \cap L^p(\Omega)$  then $\laps{t} u \in L^p(\Omega')$ with
\[
 \|\laps{t} u\|_{L^p(\Omega')} \aleq \|f_1\|_{L^p(\Omega)} + \|f_2\|_{L^p(\Omega)}+ \|f_1\|_{L^q(\R^n)} + \|\laps{t} u\|_{L^q(\R^n)}.
\]
\end{theorem}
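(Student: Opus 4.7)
The plan is a duality argument built around constructing an approximate inverse of $\laps{2s-t}$ as a test function. By $L^p$--$L^{p'}$ duality, it suffices to show that for every $g \in C_c^\infty(\Omega')$,
\[
 \left| \int_{\R^n} g\, \laps{t}u \,dx \right| \leq C\, \|g\|_{L^{p'}(\Omega')}\, \Bigl(\|f_1\|_{L^p(\Omega)} + \|f_2\|_{L^p(\Omega)} + \|f_1\|_{L^q(\R^n)} + \|\laps{t}u\|_{L^q(\R^n)}\Bigr).
\]
To exploit the equation, I would build a test function adapted to $g$ as follows. Fix $\eta \in C_c^\infty(\Omega)$ with $\eta \equiv 1$ on a neighborhood of $\overline{\Omega'}$, set $\Phi := \lapms{2s-t}(g/\bar K)$, and take $\varphi := \eta\,\Phi$. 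A density argument extending the equation from $C_c^\infty(\Omega)$ by continuity in an appropriate Bessel-potential space (using $\bar K \laps{t}u \in L^q$ and $f_1,f_2\in L^q$) admits $\varphi$ as a test function. Using the singular-integral representation of $\laps{2s-t}$ (in the main range $2s-t<1$; the remaining case is reduced to this by factoring $\laps{2s-t}=\laps{2s-t-1}\laps{1}$), and the fact that $g$ is supported where $\eta\equiv 1$,
\[
 \laps{2s-t}\varphi = \eta\,\laps{2s-t}\Phi + [\laps{2s-t},\eta]\Phi = g/\bar K + [\laps{2s-t},\eta]\Phi.
\]

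Substituting into the equation and rearranging yields
\[
 \int_{\R^n} g\, \laps{t}u\, dx = \int_{\R^n} \frac{f_1\,g}{\bar K}\,dx + \int_{\R^n} f_2\,\varphi\,dx - \int_{\R^n} \bigl(\bar K\laps{t}u - f_1\bigr)\, [\laps{2s-t},\eta]\Phi\,dx.
\]
The first term is bounded by $\|g\|_{L^{p'}(\Omega')}\|f_1\|_{L^p(\Omega)}$ since $\bar K \geq \Lambda^{-1}$. The second is bounded by $\|f_2\|_{L^p(\Omega)}\|\Phi\|_{L^{p'}(\supp\eta)}$, which in turn is controlled by $\|g\|_{L^{p'}(\Omega')}$ via Hardy--Littlewood--Sobolev for $\lapms{2s-t}$ combined with the trivial embedding $L^{\tilde{p}'}(\supp\eta)\hookrightarrow L^{p'}(\supp\eta)$ on the bounded set $\supp\eta$. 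The third term is estimated by pairing the $L^q(\R^n)$ function $\bar K\laps{t}u - f_1$ against the commutator in $L^{q'}(\R^n)$, so the crux of the argument is to show
\[
 \|[\laps{2s-t},\eta]\Phi\|_{L^{q'}(\R^n)} \leq C\, \|g\|_{L^{p'}(\Omega')}.
\]

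This commutator estimate is the main obstacle. Using the kernel representation
\[
 [\laps{2s-t},\eta]\Phi(x) = c\int_{\R^n}\frac{(\eta(x)-\eta(y))\Phi(y)}{|x-y|^{n+2(2s-t)}}\,dy,
\]
the smoothness of $\eta$ gives $|\eta(x)-\eta(y)|\lesssim \min(1,|x-y|)$, and the operator $g\mapsto [\laps{2s-t},\eta]\lapms{2s-t}g$ formally has order $-1$ (the commutator has order $2s-t-1$ and the Riesz potential has order $-(2s-t)$), hence is smoothing. To extract a clean $L^{p'}\to L^{q'}$ bound uniformly in $p,q\in(1,\infty)$, I would split $\R^n$ into a bounded neighborhood of $\supp\eta$, where the integrand reduces to a Riesz-potential-type expression to which HLS applies, and its complement, where $|x-y|\sim|x|$ and the kernel decays like $|x|^{-(n+2(2s-t))}$ so the commutator lies in $L^r$ for every $r\geq 1$ with norm controlled by $\|g\|_{L^{p'}}$. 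Tracking the exponents carefully across these two regions and combining the three estimates then closes the duality argument and yields the asserted inequality.
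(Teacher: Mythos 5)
Your overall architecture (duality, test function of the form $\eta\,\lapms{2s-t}(\cdot)$, and error terms controlled by support considerations) is viable and close in spirit to the paper's, but the justification of your key commutator bound has a genuine gap. You need $\|[\laps{2s-t},\eta]\Phi\|_{L^{q'}(\R^n)} \aleq \|g\|_{L^{p'}}$ with $\Phi=\lapms{2s-t}(g/\bar K)$, i.e.\ an $L^{p'}\to L^{q'}$ bound with $q'>p'$, and on the bounded region you propose to get it from the fact that the composite operator ``has order $-1$'' together with Hardy--Littlewood--Sobolev. That cannot work when $p$ is much larger than $q$: one derivative of smoothing buys at most the Sobolev conjugate exponent $\frac{np'}{n-p'}$, and for, say, $q=2$, $p$ large and $n$ large one has $\frac{np'}{n-p'}<q'$, so restricting to a bounded set (which only lowers integrability) does not recover $L^{q'}$. ``Tracking the exponents carefully'' across your two regions exposes exactly this shortfall; the mechanism you name does not close the argument, and with only that bound you would be forced back to an incremental gain plus an iteration over nested subdomains --- which is precisely what the paper does (Proposition 4.2 carries the restriction $r>\frac{np}{n+(2s-t)p}$ coming from its disjoint-support lemma, Proposition 2.6, and Theorem 4.1 is then obtained by iterating on a chain $\Omega'=\Omega_1\subsubset\cdots\subsubset\Omega_L\subsubset\Omega$).

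The estimate you need is in fact true for every $q'\in[1,\infty]$, but for a different reason than the one you give: since $g$ is supported in $\Omega'$ where $\eta\equiv 1$, the commutator kernel $(\eta(x)-\eta(y))|x-y|^{-n-(2s-t)}$ is active only in configurations where either $|x-y|$ is bounded below (when $x$ is near $\Omega'$) or $\Phi(y)=\lapms{2s-t}(g/\bar K)(y)$ is evaluated at a definite distance from $\supp g$ (when $x$ lies in the transition region of $\eta$), so the two singularities never meet. This yields a pointwise bound $\aleq\|g\|_{L^1(\Omega')}\aleq\|g\|_{L^{p'}}$ on any bounded set and decay $|x|^{-n-(2s-t)}$ outside $\supp\eta$, hence the $L^{q'}(\R^n)$ bound for all exponents; note that a term-by-term Leibniz splitting (isolating $\laps{2s-t}\eta\cdot\Phi$, as in the paper's proof of Proposition 2.6) is lossy and reintroduces the HLS cap, so you must estimate the commutator integral directly. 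With this repair your one-step duality argument does prove the theorem, and is arguably shorter than the paper's iteration. Three smaller points: in this paper's convention the kernel of $\laps{2s-t}$ is $|x-y|^{-(n+(2s-t))}$, not $|x-y|^{-(n+2(2s-t))}$; the factorization $\laps{2s-t}=\laps{2s-t-1}\laps{1}$ does not by itself reduce the case $2s-t\geq 1$ to $2s-t<1$ (both factors remain nonlocal; use second differences or the commutator identity $[AB,\eta]=A[B,\eta]+[A,\eta]B$ and treat each piece by the same disjoint-support argument); and your $\varphi=\eta\,\lapms{2s-t}(g/\bar K)$ is not smooth because $\bar K$ is only measurable, so the density step is genuinely needed --- or avoid it entirely by taking $\Phi=\lapms{2s-t}g$ and absorbing $\bar K\geq\Lambda^{-1}$ at the end, as the paper does.
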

It might seem surprising at first that in \Cref{th:regularityKlapls:1} there is no assumption on the kernel being continuous or belonging to $VMO$ -- and still we are able to obtain $L^p$-estimates for any $p > 2$ if the right-hand side of the equation is good enough.
For classical divergence form equations, 
\begin{equation}\label{eq:divknu}
 \div(\bar{K} \nabla u) = f
\end{equation}
if $\bar{K}$ is only bounded measurable, the best one can hope for is an $W^{1,2+\eps}$-type estimate (if $f$ is nice enough) -- this is known as a Meyers-type estimate, \cite{M63,EM75}. The reason that we get a (seemingly) better result in \Cref{th:regularityKlapls:1} is not because of the fractional order, but rather of the fact that $\nabla$ and $\div$ are non-elliptic operators, while $\laps{2s-t}$ is invertible. 

An argument such as the one described before \Cref{th:regularityKlapls:1} does not work for solutions to \eqref{eq:divknu}, because we cannot invert the $\div$-operator (and indeed for merely bounded measurable kernels only Meyer's $2+\eps$-estimate remains true). So in \Cref{th:regularityKlapls:1} we make crucial use of the fact that the equation involved is structurally substantially different from (and for our purposes: simpler than) \eqref{eq:divknu} -- even if $s = 1$. 

Let us mention in passing, that a more proper `nonlocal analogue' of the equation \eqref{eq:divknu} (in the sense that it has generally comparable regularity properties as \eqref{eq:divknu}) is 
\begin{equation}\label{eq:Rieszfracgradient}
  D^{2s-t}[\bar{K}D^{t} u] = f
\end{equation}
where $D^t$ denotes the Riesz-fractional gradient $\Rz \laps{t} \equiv \nabla \lapms{1-t}$. Indeed, if $\bar{K}$ is merely bounded, measurable then for solutions to \eqref{eq:Rieszfracgradient} only Meyer-type estimates are known, \cite[Section 9]{ABES17}; and one needs $\bar{K}$ in VMO to conclude $L^p$-estimates, \cite{SSS15}. See also \cite{SS15,SS18}.  

%

Let us remark on previous arguments that inspired this work: for regularity theory via an harmonic analysis approach in the local case with an elliptic matrix $A_{\alpha,\beta}$ instead of the scalar $A$ see \cite{IS98}. This was applied to nonlocal equations different from \eqref{eq:nonloc} in \cite{SSS15}. Commutator operator similar to $D_{s,t}$ have also been proved to be very useful in the harmonic analysis of harmonic-type maps between manifolds \cite{S15} and nonlocal equations arising in topological calculus of variations, \cite{BRS2}.

The remainder of this work is as follows: in \Cref{s:commutator} we prove the commutator estimate for $D_{s,t}$. This essentially reduces the desired Calderon-Zygmund theory to that of the theory of a weighted fractional Laplacian which we treat in \Cref{s:fraclap} where the proof of \Cref{th:regularityKlapls:1}. Since we only obtain local estimates, we will repeatedly employ cutoff arguments that are obtained in \Cref{s:localglobal}. In \Cref{s:proofmain}, the proof of the main result \Cref{th:main} is presented. And finally, the corollaries of \Cref{th:main} are proved Section~\ref{s:corollaries}.

\textbf{Acknowledgments.}
The authors acknowledge funding as follows
\begin{itemize}
\item TM: National Science Foundation (NSF), grant no 1910180. 
\item AS: Simons foundation, grant no 579261.
\item SY: Science Achievement Scholarship of Thailand (SAST)
\end{itemize}
The research that lead to this work was partially carried out while AS was visiting Chulalongkorn University, and while SY was visiting the University of Pittsburgh.

\section{Preliminaries and notation}
Some notation and convention we will use throughout the paper. Domains of integrals are always open sets. We use the symbol $\subsubset$ to say compactly contained, e.g. $\Omega_1 \subsubset \Omega_2$ if $\overline{\Omega_1}$ is compact and $\overline{\Omega_1} \subset \Omega_2$.

Constants change from line to line, and generally depend on the dimension. We will make frequent use of $\aleq$, $\ageq$ and $\aeq$, which denotes inequalities with multiplicative constants (depending on non-essential data). For example we say $A \aleq B$ if for some constant $C > 0$ we have $A \aleq C B$.

We work with fractional Laplacians, Sobolev spaces, and related operators. Below we introduce the notation but refer the interested reader to surveys, e.g. \cite{DNPV12,G19}, or monographs \cite{S02}. We will use many techniques from harmonic analysis, such as Sobolev inequalities, embeddings etc. -- these are all well-known in the abstract framework of Triebel-Lizorkin or Besov-space theory -- see e.g. in \cite{GMF}. Generally we like to refer to \cite{RS96} for the identification of Triebel-Lizorkin and Besov-spaces with the ``usual'' function spaces. While we try to make as little as possible use of such abstract arguments sometimes they are unavoidable. 

For $s > 0$ we have defined the fractional Laplacian $\laps{s}$ in the introduction as the operator that, for $f$ in the Schwartz class, acts as multiplier with symbol $c|\xi|^s$ (cf. \eqref{eq:fraclapFT})
\[
 \mathcal{F}(\laps{s} f)(\xi) = c\, |\xi|^s \mathcal{F} f(\xi).
\]
The Riesz potential $\lapms{s} = (-\lap)^{-\frac{s}{2}}$ is the inverse of the fractional Laplacian, i.e. the multiplier operator with symbol $(c|\xi|^s)^{-1}$,
\[
 \mathcal{F}(\lapms{s} f)(\xi) := \frac{1}{c}\, |\xi|^{-s} \mathcal{F} f(\xi).
\]
This operator makes sense (for $f$ a function in the Schwartz class) if $0 \leq s < n$, because $|\xi|^{-s}$ is then locally integrable.

For $s \in (0,2)$ the fractional Laplacian $\laps{s}$ has a useful integral representation. Namely, for a function $f$ in the Schwartz class
\[
 \laps{s} f(x) = c \int_{\R^n} \frac{f(x)-f(y)}{|x-y|^{n+s}}\, dy, 
\]
where the integral is defined in the \emph{principal value} sense, although we do not explicitly state it. 
For the Riesz potential, for $s \in (0,n)$, we have the representation
\[
 (-\lap)^{-\frac{s}{2}} f(x) \equiv \lapms{s} f(x) = c\int_{\R^n} |x-y|^{s-n} f(y)\, dy
\]
for a function $f$ in the Schwartz class. 

For functions $f$ and $g$ in the Schwartz class, the $L^2$-inner product of $ \laps{s} f(x)$ and  $g(x)$  can be represented as, for $s \in (0,2)$,
 \begin{equation}\label{eq:fraclap}
 \int_{\R^n} \laps{s} f(x) \, g(x) dx = \int_{\R^n} \int_{\R^n} \frac{(f(y)-f(x))(g(y)-g(x))}{|x-y|^{n+s}}\, dx\, dy
 \end{equation}
 see e.g. \cite[Proposition 2.36.]{ArminPhD} or \cite{DNPV12}.
 
 Fractional Laplacians and gradients are related via Riesz transforms and Riesz potentials. The Riesz transform, $\Rz = (\Rz_1,\ldots,\Rz_n) := \nabla \lapms{1}$, has the Fourier symbol $c i \frac{\xi}{|\xi|}$, and a potential representation
 \[
  \Rz f(x) = \int_{\R^n} \frac{\frac{x-y}{|x-y|} }{|x-y|^{n}}\, f(y)\, dy.
 \]
Riesz transforms are most prominent examples of Calderon-Zygmund operators and are $L^p$-bounded. That is, for $1<p<\infty$, there exists a constant $C=C(n,p)>0$ such that 
\[
\|  \Rz f\|_{L^{p}} \leq C\|f\|_{L^{p}}, \quad\text{for all $f\in L^{p}$}. 
\]
The now classical $L^p$-regularity theory for linear second-order PDEs is called Calderon-Zygmund theory because it (secretly or explicitly) relies on estimates of Calderon-Zygmund-operators (in most of the cases: the Riesz transforms).
%

We will frequently use Sobolev inequalities for Riesz potential. 

\begin{proposition}[Sobolev inequalities]\label{pr:sob}
Suppose that $s \in (0,n)$ and $p \in (1,\infty)$. Then, 
\begin{enumerate}
\item[(a)] if $sp < n$, then there exists a constant $C=C(s,p,n)>0$ such that 
\begin{equation}\label{eq:sob:glob}
 \|\lapms{s} g\|_{L^{\frac{np}{n-sp}}(\R^n)} \leq C\, \|g\|_{L^p(\R^n)}\quad \text{for any $g \in L^p(\R^n)$}.
\end{equation}
In addition, if $\Omega\subset\R^{n}$ is bounded, then corresponding to any $q \in [1,\frac{np}{n-sp}]$, there is a constant $C=C(s,p,n, \Omega)>0$ such that 
\begin{equation}\label{eq:sob:loc1}
 \|\lapms{s} g\|_{L^q(\Omega)} \leq C\, \|g\|_{L^p(\R^n)} \quad \text{for any $g \in L^p(\R^n)$}.
\end{equation}
\item [(b)]If $sp \geq n$ and $\Omega\subset\R^{n}$ is bounded domain,  then for any $q \in [1,\infty)$, and $r \in [1,\frac{n}{s})$, there exists a constant $C=C(s,p,n, \Omega)>0$ such that 
\begin{equation}\label{eq:sob:loc2}
 \|\lapms{s} g\|_{L^q(\Omega)} \leq C\, \brac{\|g\|_{L^p(\R^n)}+\|g\|_{L^r(\R^n)}}.
\end{equation}
\end{enumerate}
\end{proposition}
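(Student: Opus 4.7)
The plan is to reduce the proposition to the classical Hardy--Littlewood--Sobolev inequality together with elementary splitting and H\"older arguments that exploit the boundedness of $\Omega$.

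For part (a), the global estimate \eqref{eq:sob:glob} is precisely the Hardy--Littlewood--Sobolev inequality for the Riesz potential, which may be cited from \cite{S02}; its standard proof establishes the corresponding weak-type bound via the splitting
\[
 \lapms{s}g(x) = c\int_{|x-y|<R} |x-y|^{s-n} g(y)\,dy + c\int_{|x-y|\geq R} |x-y|^{s-n} g(y)\,dy,
\]
optimises in $R > 0$, and then applies Marcinkiewicz interpolation. The local estimate \eqref{eq:sob:loc1} is an immediate consequence: since $\Omega$ is bounded and $q \leq p^\ast := \tfrac{np}{n-sp}$, H\"older's inequality yields
\[
 \|\lapms{s}g\|_{L^q(\Omega)} \leq |\Omega|^{\frac{1}{q}-\frac{1}{p^\ast}}\, \|\lapms{s}g\|_{L^{p^\ast}(\R^n)} \leq C\, \|g\|_{L^p(\R^n)}.
\]

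For part (b), the hypothesis $sp \geq n$ prevents a direct appeal to \eqref{eq:sob:glob}, so I would split according to the size of $|x-y|$,
\[
 \lapms{s}g(x) = c\int_{|x-y|<1} |x-y|^{s-n} g(y)\,dy + c\int_{|x-y|\geq 1} |x-y|^{s-n} g(y)\,dy =: I_1(x) + I_2(x),
\]
using the $L^r$-control for the ``far'' piece $I_2$ and the $L^p$-control for the ``near'' piece $I_1$. For $I_2$, H\"older's inequality with exponents $(r,r')$ gives
\[
 |I_2(x)| \leq C\,\|g\|_{L^r(\R^n)} \brac{\int_{|y|\geq 1} |y|^{(s-n)r'}\,dy}^{1/r'},
\]
where the integral on the right is finite exactly because $r < \tfrac{n}{s}$ is equivalent to $r' > \tfrac{n}{n-s}$; hence $\|I_2\|_{L^\infty(\R^n)} \leq C\|g\|_{L^r(\R^n)}$, and in particular $I_2 \in L^q(\Omega)$ for every $q$. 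For $I_1$, I interpret it as the convolution $I_1 = (c|\cdot|^{s-n}\chi_{B_1})\ast g$; the kernel lies in $L^\beta(\R^n)$ for every $\beta \in [1,\tfrac{n}{n-s})$. Young's convolution inequality then produces $I_1 \in L^q(\R^n)$ for any $q \in [p,\infty)$ by choosing $\beta$ so that $1+\tfrac{1}{q} = \tfrac{1}{p}+\tfrac{1}{\beta}$, which is admissible because $sp \geq n$ together with $q < \infty$ force $\tfrac{1}{\beta} > \tfrac{n-s}{n}$ automatically. For $q < p$ one upgrades via H\"older on the bounded set $\Omega$ starting from $I_1 \in L^p(\R^n)$, and the two pieces combine to give \eqref{eq:sob:loc2}.

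The only delicate point is the borderline case $sp = n$, where a naive H\"older pairing $(p,p')$ just fails to control $|y|^{s-n}$ near the origin (the resulting integral diverges logarithmically); the remedy is precisely to use Young's convolution inequality rather than H\"older, so that the borderline singularity of the kernel is distributed between $g$ and $K$ by paying an arbitrarily small amount of integrability. Once this is set up, the remaining verifications (checking admissible ranges of $\beta$, assembling the two pieces, and the H\"older upgrade on $\Omega$) are routine.
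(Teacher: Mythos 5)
Your argument is correct, and for part (a) it coincides with the paper's (cite the Hardy--Littlewood--Sobolev theorem for \eqref{eq:sob:glob}, then H\"older on the bounded set $\Omega$ for \eqref{eq:sob:loc1}). For part (b), however, you take a genuinely different route. The paper never touches the kernel: it picks $\theta \in (r,\tfrac{n}{s})$ with $\tfrac{n\theta}{n-s\theta} > q$, notes $\theta < p$ because $sp \geq n$, uses the interpolation bound $\|g\|_{L^\theta(\R^n)} \aleq \|g\|_{L^p(\R^n)} + \|g\|_{L^r(\R^n)}$, and then simply reapplies \eqref{eq:sob:loc1} with $p$ replaced by $\theta$ --- a three-line reduction of (b) to (a). You instead split $\lapms{s}g$ into the near piece $I_1$ and the far piece $I_2$ at scale $|x-y|=1$, control $I_2$ in $L^\infty$ by H\"older against the tail $|z|^{(s-n)r'}$ (your equivalence $r<\tfrac{n}{s} \Leftrightarrow r'>\tfrac{n}{n-s}$ is right, with the obvious $r=1$, $r'=\infty$ reading), and control $I_1$ by Young's inequality with the truncated kernel in $L^\beta$, $\beta \in [1,\tfrac{n}{n-s})$; your check that $sp\geq n$ and $q<\infty$ make the exponent $\tfrac{1}{\beta}=1+\tfrac1q-\tfrac1p$ admissible is correct, as is the H\"older downgrade on $\Omega$ when $q<p$. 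What each approach buys: the paper's is shorter and reuses the already-proved local estimate, but it silently leans on the $L^p$--$L^r$ interpolation inequality; yours is more self-contained (it does not need part (a) at all for part (b)), makes the role of the hypothesis $r<\tfrac{n}{s}$ transparent as integrability of the far tail, gives the extra information $\|I_2\|_{L^\infty(\R^n)} \aleq \|g\|_{L^r(\R^n)}$, and your closing remark correctly identifies why the borderline $sp=n$ forces Young rather than a naive $(p,p')$ H\"older pairing. Both proofs are complete and yield the stated dependence of the constants.
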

\begin{proof}
The proof of \eqref{eq:sob:glob} can be found in \cite{alma996965690102311}. \eqref{eq:sob:loc1} follows easily from \eqref{eq:sob:glob}.  As for \eqref{eq:sob:loc2}, observe that for any $q \in (1,\infty)$ there exists some $\theta \in (r,\frac{n}{s})$ such that $\frac{\theta n}{n-s\theta} > q$. Observe that $\theta < p$ so that we have the interpolation inequality
\[
\|g\|_{L^\theta(\R^n)} \aleq \|g\|_{L^p(\R^n)} + \|g\|_{L^r(\R^n)}
\]
By \eqref{eq:sob:loc1} we have
\[
 \|\lapms{s} g\|_{L^q(\Omega)} \aleq \|g\|_{L^\theta(\R^n)} \aleq {\|g\|_{L^p(\R^n)}+\|g\|_{L^r(\R^n)}}.
\]
\end{proof}
We also need the following characterization of the dual space of the function spaces $H^{s,p}(\R^n)$ and $\dot{H}^{s,p}(\R^n).$ The homogeneous space $\dot{H}^{s,p}(\R^n)$ is the set of $u$ a tempered distributions such that $(-\Delta)^{s/2}u\in L^{p}(\R^{n})$, with the semi-norm $\| (-\Delta)^{s/2}u\|_{L^{p}}$.      

By definition,  $T \in \brac{H^{s,p}(\R^n)}^\ast$, the dual space of $H^{s,p}(\R^n)$, if $T$ is linear on $\varphi \in C_c^\infty(\R^n)$ and
\[
 |T[\varphi]|\leq \Lambda \brac{\|\varphi\|_{L^p(\R^n)} + \|\laps{s} \varphi\|_{L^p(\R^n)}} \quad \forall \varphi \in C_c^\infty(\R^n).
\]
The operator norm of $T$, $\|T\|$, is defined to be the infimum of all such $\Lambda$. 
Similarly, $T \in \brac{\dot{H}^{s,p}(\R^n)}^\ast$, if $T$ is linear on $\varphi \in C_c^\infty(\R^n)$ and
\[
 |T[\varphi]|\leq \Lambda \|\laps{s} \varphi\|_{L^p(\R^n)} \quad \forall \varphi \in C_c^\infty(\R^n).
\]

\begin{proposition}(Dual Spaces)\label{pr:dualclass}
\begin{enumerate}
\item If $T \in \brac{H^{s,p}(\R^n)}^\ast$, then there exists $g_1,g_2 \in L^{p'}(\R^n)$, 
\[
\|g_1\|_{L^{p'}(\R^n)} + \|g_2\|_{L^{p'}(\R^n)} \aeq \|T\|
\]
such that 
\[
 T[\varphi] = \int g_1 \laps{s} \varphi + \int g_2 \varphi \quad \forall \varphi \in C_c^\infty(\R^n).
\]
\item  If $T \in \brac{\dot{H}^{s,p}(\R^n)}^\ast$, then there exists $g \in L^{p'}(\R^n)$, 
\[
\|g\|_{L^{p'}(\R^n)} \aeq \|T\|
\]
such that 
\[
 T[\varphi] = \int g \laps{s} \varphi \quad \forall \varphi \in C_c^\infty(\R^n).
\]
\end{enumerate}
\end{proposition}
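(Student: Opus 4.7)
The plan is to realize $\brac{H^{s,p}(\R^n)}^\ast$ as a quotient of $\brac{L^{p'}(\R^n) \oplus L^{p'}(\R^n)}$ via a Hahn-Banach extension, and then use the Riesz representation of $(L^p)^\ast$.

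First I would introduce the product space $X := L^p(\R^n) \oplus L^p(\R^n)$ equipped with the norm $\|(u,v)\|_X := \|u\|_{L^p(\R^n)} + \|v\|_{L^p(\R^n)}$, and consider the linear map
\[
 \iota: H^{s,p}(\R^n) \to X, \qquad \iota(f) := (f,\, \laps{s} f).
\]
By the very definition of $H^{s,p}$, the map $\iota$ is a linear isometry onto its image, and $\iota(H^{s,p}(\R^n))$ is a closed linear subspace of $X$. Thus the prescription $S(\iota(f)) := T[f]$ defines a bounded linear functional on $\iota(H^{s,p}(\R^n))$ with operator norm equal to $\|T\|$.

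Next, I would apply the Hahn-Banach theorem to extend $S$ to a bounded linear functional $\tilde S$ on all of $X$ without changing the operator norm. The dual of $X$ is isometrically $L^{p'}(\R^n) \oplus L^{p'}(\R^n)$ via the usual $L^p$-$L^{p'}$ pairing, so there exist $g_2, g_1 \in L^{p'}(\R^n)$ with
\[
 \|g_1\|_{L^{p'}(\R^n)} + \|g_2\|_{L^{p'}(\R^n)} \aeq \|\tilde S\| = \|T\|,
\]
and
\[
 \tilde S(u,v) = \int_{\R^n} g_2\, u + \int_{\R^n} g_1\, v \qquad \forall (u,v) \in X.
\]
Specializing to $(u,v) = \iota(\varphi) = (\varphi, \laps{s} \varphi)$ for $\varphi \in C_c^\infty(\R^n) \subset H^{s,p}(\R^n)$ yields the representation claimed in (1). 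The converse direction (any such $(g_1,g_2)$ defines an element of the dual with norm controlled by $\|g_1\|_{L^{p'}} + \|g_2\|_{L^{p'}}$) is immediate from H\"older's inequality, so the norm equivalence follows.

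For the homogeneous version (2), the same strategy applies with $\iota$ replaced by the single-component map $\dot\iota(f) := \laps{s} f$ into the target space $L^p(\R^n)$. Since the norm on $\dot H^{s,p}(\R^n)$ is exactly $\|\laps{s} f\|_{L^p(\R^n)}$, the map $\dot\iota$ is an isometry, and the Hahn-Banach/Riesz representation step produces a single $g \in L^{p'}(\R^n)$ with $\|g\|_{L^{p'}(\R^n)} \aeq \|T\|$ and $T[\varphi] = \int g\, \laps{s} \varphi$ for all $\varphi \in C_c^\infty(\R^n)$. The only nontrivial point here, which is the mildly delicate step of the whole argument, is to verify that the identification of $\dot H^{s,p}(\R^n)$ with a subspace of tempered distributions modulo polynomials is compatible with the fact that $T$ is \emph{a priori} only defined on $C_c^\infty(\R^n)$; this is handled by the density of $C_c^\infty(\R^n)$ in $\dot H^{s,p}(\R^n)$ in the seminorm $\|\laps{s} \cdot\|_{L^p(\R^n)}$, which for $sp < n$ is a standard fact and for general $s,p$ can be circumvented by interpreting the representation only on test functions as the proposition is stated.
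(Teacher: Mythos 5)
Your argument is correct, but it follows a genuinely different route from the paper. For part (1) you embed $H^{s,p}(\R^n)$ isometrically into $L^p(\R^n)\oplus L^p(\R^n)$ via the graph-type map $f\mapsto (f,\laps{s}f)$, extend the functional by Hahn--Banach, and invoke $(L^p)^\ast \aeq L^{p'}$; the paper instead composes $T$ with the Bessel potential isomorphism $\mathcal{F}^{-1}\langle\xi\rangle^{-s}\mathcal{F}$ to land in $(L^p)^\ast$, and then produces $g_1,g_2$ explicitly from Stein's decomposition of $\langle\xi\rangle^{s}$ as a finite measure plus a finite measure composed with the multiplier $|\xi|^s$. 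For part (2) you again use Hahn--Banach on the subspace $\{\laps{s}\varphi:\varphi\in C_c^\infty(\R^n)\}\subset L^p(\R^n)$ (well-definedness being automatic from the a priori bound $|T[\varphi]|\leq \Lambda\|\laps{s}\varphi\|_{L^p}$), whereas the paper cites the Triebel--Lizorkin duality $(F^{s}_{p,2})^\ast \aeq F^{-s}_{p',2}$ together with the lifting property of $\lapms{s}$. Your approach buys elementarity and self-containedness: it needs no harmonic-analysis input beyond H\"older and the $L^p$--$L^{p'}$ duality, and your closing remark correctly notes that since the representation is only asserted on $C_c^\infty(\R^n)$, no density or ``modulo polynomials'' discussion is needed. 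What it gives up is constructiveness: Hahn--Banach produces a non-canonical extension, hence non-canonical $g_1,g_2$ (and $g$), while the paper's representatives are explicit convolutions $\nu_s\ast u_0$, $\lambda_s\ast u_0$, which is occasionally convenient though not needed anywhere in this paper. Two cosmetic points you may wish to tighten: with the $\ell^1$-sum norm on $L^p\oplus L^p$ the dual carries the max norm, so the displayed equivalence $\|g_1\|_{L^{p'}}+\|g_2\|_{L^{p'}}\aeq\|T\|$ holds with a factor $2$, exactly as the statement's $\aeq$ allows; and since the paper defines $T$ a priori only on $C_c^\infty(\R^n)$, you should either define $S$ on $\iota(C_c^\infty(\R^n))$ (Hahn--Banach does not require a closed subspace) or first extend $T$ to $H^{s,p}(\R^n)$ by density --- either fix is one line.
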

\begin{proof}
Let $T \in \brac{{H}^{s,p}(\R^n)}^\ast$. Denoting $\langle{\xi} \rangle := \sqrt{1+|\xi|^2}$, using the equivalence of norms we have
\[
|T(\varphi)|\leq \|T\|\|\mathcal{F}^{-1}( \langle{\xi}\rangle^{s}\mathcal{F}(\varphi))\|_{L^{p}},\quad \text{for all $\varphi \in C_c^\infty(\R^n)$.}
\]
We then introduce the linear function $\tilde{T}: L^{p}(\R^{n})\to \R$ defined by 
\[
\tilde{T}(v) = T(\mathcal{F}^{-1}( \langle{\xi}\rangle^{-s}\mathcal{F}(v))).
\]
Then from the estimate for $T$, we have that $|\tilde{T}(v)| \leq \|T\|\|v\|_{L^{p}}$ for all $v\in L^{p}(\R^{n})$. By the characterization of the dual of $L^p$ spaces we have  $u_0\in L^{p'}(\R^{n})$ such that 
\[
\tilde{T}(v) = \int_{\R^{n}}u_0(x)v(x)dx,\quad \text{for all $v\in L^{p}(\R^{n})$.}\]
Define now $g = \mathcal{F}^{-1}( \langle{\xi}\rangle^{s}\mathcal{F}(u_0))$. Then $g\in H^{-s, p'}(\R^n)$ and  for any $\varphi\in \mathcal{S}$, the Schwartz space, we have by applying Plancherel's theorem repeatedly that 
\[
\begin{split}
\langle g, \varphi\rangle = \langle  \mathcal{F}^{-1}( \langle{\xi}\rangle^{s}\mathcal{F}(u_0)), \mathcal{F}^{1}(\mathcal{F} \varphi)\rangle &=  \langle  \langle{\xi}\rangle^{s}\mathcal{F}(u_0), \mathcal{F} \varphi\rangle\\
& =  \langle  \mathcal{F}(u_0), \langle{\xi}\rangle^{s}(\mathcal{F} \varphi)\rangle\\
 &=  \langle  u_0, \mathcal{F}^{-1}(\langle{\xi}\rangle^{s}(\mathcal{F} \varphi))\rangle = \tilde{T}(\mathcal{F}^{-1}(\langle{\xi}\rangle^{s}(\mathcal{F} \varphi))) = T(\varphi)
\end{split}
\]
We next characterize $g$ further. Using \cite[Lemma 2 of Chapter 3]{alma996965690102311}, that describes the relationship between Riesz and Bessel potentials, there exists a pair of finite measures $\nu_s$ and $\lambda_s$ so that 
\[
\begin{split}
g= \mathcal{F}^{-1}( \langle{\xi}\rangle^{s}\mathcal{F}(u_0)) 
&=  \nu_s\ast u_0 +  \mathcal{F}^{-1}( |\xi|^{s}\mathcal{F}(\lambda_s \ast u_0))\\
\end{split}
\]
Define $g_1 = \nu_s\ast u_0 $ and $g_2 = \lambda_s\ast u_0 $. Then both $g_1$ and $g_2$ are in $L^{p'}(\R^{n})$. Moreover, by applying Plancherel's theorem again
\[
T(\varphi) = \langle g, \varphi\rangle = \langle g_1, \varphi\rangle +\langle  \mathcal{F}^{-1}( |\xi|^{s}\mathcal{F}(g_2)), \varphi\rangle =  \langle g_1, \varphi\rangle +\langle g_2, \mathcal{F}^{-1}( |\xi|^{s}\mathcal{F}(\varphi))\rangle
\]
as desired.

As for the second part, observe that since $\dot{H}^{s,q}(\R^n) \aeq F^{s}_{q,2}(\R^n)$ we have that $(\dot{H}^{s,q}(\R^n))^\ast \aeq F^{-s}_{q',2}(\R^n)$ (\cite[Remark 5.14]{FJ90}). Since $\lapms{s}$ is an isomorphism from $F^{-s}_{q',2}(\R^n)$ to $F^{0}_{q',2} \aeq L^{q'}$, see \cite[\textsection 2.6, Proposition 2, p. 95]{RS96}, we find that for any $(\dot{H}^{s,q}(\R^n))^\ast$ there must be $g \in L^{q'}(\R^n)$ with $\|g\|_{L^{q'}(\R^n)} \aeq \|T\|_{(\dot{H}^{s,q}(\R^n))^\ast}$ such that $\laps{s} g[\varphi] = T[\varphi]$, that is
$
 T[\varphi] = \int_{\R^n} \laps{s} g\, \varphi.
$
\end{proof}

Let us mention also two technical results, that we will employ frequently. They fall under the notion of ``cutoff argument'', and the techniques are mainly based on estimating nonlocal quantities for functions with disjoint support.
\begin{lemma}\label{la:cutoffarg}
Let $\Omega_1 \subsubset \Omega_2 \subsubset \R^n$, and $u, v \in H^{s,2}(\R^n)$ with $u \equiv v$ in $\Omega_2$, $s \in [0,1)$.

Then for any $p \in (1,\infty)$ we have
\[
 \|\laps{s} u\|_{L^p(\Omega_1)} \aleq \|\laps{s} v\|_{L^p(\Omega_2)} + \|u\|_{L^p(\R^n)} + \|v\|_{L^p(\R^n)}.
\]
\end{lemma}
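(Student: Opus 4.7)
The plan is to isolate the difference $w := u - v$, which lies in $H^{s,2}(\R^n)$ and vanishes identically on $\Omega_2$, and to reduce the estimate to a harmless convolution estimate on $w$.

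First I would split by linearity:
\[
\|\laps{s} u\|_{L^p(\Omega_1)} \leq \|\laps{s} v\|_{L^p(\Omega_1)} + \|\laps{s} w\|_{L^p(\Omega_1)} \leq \|\laps{s} v\|_{L^p(\Omega_2)} + \|\laps{s} w\|_{L^p(\Omega_1)},
\]
so the task collapses to proving $\|\laps{s} w\|_{L^p(\Omega_1)} \aleq \|w\|_{L^p(\R^n)}$, which is then bounded by $\|u\|_{L^p(\R^n)} + \|v\|_{L^p(\R^n)}$.

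For $s=0$ there is nothing to do. For $s\in(0,1)$ I would exploit the singular integral representation of $\laps{s}$. Set $d := \dist(\Omega_1,\R^n\setminus\Omega_2) > 0$. For $x\in\Omega_1$, since $w$ vanishes on a neighbourhood of $x$ of radius $d$ inside $\Omega_2$, the principal value ceases to be singular, and one obtains the pointwise identity
\[
\laps{s} w(x) \;=\; -c\int_{\R^n\setminus\Omega_2} \frac{w(y)}{|x-y|^{n+s}}\,dy \;=\; -c\,(K\ast w)(x), \qquad K(z) := \chi_{\{|z|\geq d\}}\,|z|^{-n-s}.
\]
Since $K\in L^1(\R^n)$ (with $\|K\|_{L^1}\aleq s^{-1}d^{-s}$), Young's convolution inequality then yields
\[
\|\laps{s} w\|_{L^p(\Omega_1)} \;\leq\; c\,\|K\ast w\|_{L^p(\R^n)} \;\leq\; c\,\|K\|_{L^1}\,\|w\|_{L^p(\R^n)},
\]
which closes the argument.

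The one step that needs a little care is justifying the pointwise formula for $\laps{s} w$ at the level of $H^{s,2}$-functions rather than Schwartz ones. I would handle it by mollifying $w$ with a kernel supported in a ball of radius less than $d/2$: the mollified functions still vanish on a slightly shrunken version of $\Omega_2$ containing $\Omega_1$, the singular-integral identity holds for them classically, and one passes to the limit using that the convolution against $K$ is a non-singular integral whose absolute convergence is controlled by $\|w\|_{L^2}\|K\|_{L^2}$. No harmonic-analytic machinery beyond this is needed.
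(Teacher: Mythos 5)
Your proof is correct, and it takes a genuinely different route from the paper's. The paper decomposes $u=\eta v+(1-\eta)u$ with a cutoff $\eta\in C_c^\infty(\Omega_2)$, $\eta\equiv1$ near $\Omega_1$: the far part $\laps{s}\brac{(1-\eta)u}$ is handled on $\Omega_1$ by the same disjoint-support/Young argument you use, while the near part is written as $\laps{s}(\eta v)=\eta\laps{s}v+[\laps{s},\eta](v)$ and the commutator is controlled by the Coifman--McIntosh--Meyer estimate, which is exactly where the restriction $s<1$ enters. By subtracting instead, your $w=u-v$ vanishes identically on $\Omega_2$, so there is no near-diagonal term at all: only the elementary kernel bound and Young's inequality are needed, the constant is explicit in $d=\dist(\Omega_1,\R^n\setminus\Omega_2)$ and $s$, and the argument works verbatim for any $s\in(0,2)$, not only $s<1$. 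What the paper's cutoff-plus-commutator template buys is mainly consistency with the computations it reuses in the local-to-global reduction of \Cref{s:localglobal}; for this lemma itself nothing is lost by your shortcut. Two small points to make explicit when writing it up: the identification of $\int_{\R^n\setminus\Omega_2}|x-y|^{-n-s}w(y)\,dy$ with $(K\ast w)(x)$ for $x\in\Omega_1$ again uses $w\equiv0$ on $\Omega_2$, since the ball $\{|x-y|< d\}$ need not exhaust $\Omega_2$; and after mollification the function only vanishes on a shrunken copy of $\Omega_2$, so the intermediate kernel is $\chi_{\{|z|\geq d/2\}}|z|^{-n-s}$ (still in $L^1\cap L^2$), which changes nothing in the limit. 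Alternatively, the pointwise identity on $\Omega_1$ can be obtained without mollification by testing against $\varphi\in C_c^\infty(\Omega_1)$, using $\int\laps{s}w\,\varphi=\int w\,\laps{s}\varphi$ and Fubini, if you prefer to avoid discussing the classical representation for smooth $L^2$ functions.
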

\begin{proof}
Let $\eta \in C_c^\infty(\Omega_2)$ with $\eta \equiv 1$ in a neighborhood of $\Omega_1$.

We have 
\[
 u = \eta v + (1-\eta) u.
\]
Then
\[
 \chi_{\Omega_1} \laps{s} u = \chi_{\Omega_1} \laps{s} (\eta v) + \chi_{\Omega_1} \brac{\laps{s} (1-\eta) u},
\]
and by the usual disjoint support argument
\[
 \left |\chi_{\Omega_1} \brac{\laps{s} (1-\eta) u}\right | (x) \aleq \int_{|x-y|\ageq 1} |x-y|^{-n-s} |u(y)|\, dy
\]
By Young's inequality for convolutions we conclude
\[
 \|\chi_{\Omega_1} \brac{\laps{s} (1-\eta) u}\|_{L^\infty} \aleq \|u\|_{L^p(\R^n)}.
\]
And thus in particular,
\[
  \|\chi_{\Omega_1} \brac{\laps{s} (1-\eta) u}\|_{L^p} \aleq \|u\|_{L^p(\R^n)}.
\]
Now we use commutator notation $[T,m](f) = T(mf)-mTf$,
\[
 \laps{s} (\eta v) = \eta \laps{s} v + [\laps{s},\eta](v).
\]
Since $s \in (0,1)$ we can use the Coifman–McIntosh–Meyer commutator estimate, e.g. in the formulation in \cite[Theorem 6.1.]{LS18} or the Leibniz rule, \cite[Theorem 7.1.]{LS18}, and conclude that 
\[
 \|[\laps{s},\eta](v)\|_{L^p(\Omega_1)} \aleq \|\eta \|_{\lip}\, \|v\|_{L^p(\R^n)}.
\]
This concludes the proof.
\end{proof}

\begin{proposition}\label{prop:disjoint-support}
Suppose that $\eta_1, \eta_2, \psi\in C_c^{\infty}(\R^{n})$, and that $\eta_1 = 1$ in a neighborhood of the support of $\psi$ and $\eta_1 =1$ in the neighborhood of the support of $\eta_2$. 
Suppose that $p\in (1, \infty)$, $\tau \in (0,2)$ and 
 \[\text{$r > \frac{n p}{n+\tau p}$ if $\tau \leq 1$\,\, and  $r > \frac{n p}{n+p}$ if $\tau \geq 1$} \]
 Then we have the following estimates:
 \begin{enumerate}
 \item[(a)] There exists a constant $C>0$ such that 
 \begin{equation}\label{eq:alskd2}
 \|(1-\eta_2)\laps{\tau} \brac{(1-\eta_1) \lapms{\tau} \psi }\|_{L^{r'}(\R^{n})} \leq C \|\psi\|_{L^{p'}(\R^n)}.
 \end{equation}
 \item[(b)] For any bounded set $\Sigma \subsubset \R^n$, there exists a constant $C=C(\Sigma)$ such that 
\begin{equation}\label{eq:alskd1}
 \|\laps{\tau} \brac{(1-\eta_1) \lapms{\tau} \psi }\|_{L^{r'}(\Sigma)} \leq C \|\psi\|_{L^{p'}(\R^n)}.
\end{equation}
\end{enumerate}
\end{proposition}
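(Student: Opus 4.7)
The plan is to exploit the structural cancellation from $\eta_1\equiv 1$ on a neighborhood of $\supp\psi$ in order to rewrite $\laps{\tau}g$ (with $g:=(1-\eta_1)\lapms{\tau}\psi$) as a commutator applied to a Riesz potential, and then to reduce both estimates to a pointwise bound of the form $|\laps{\tau}g(x)|\aleq \lapms{1}|\psi|(x)$ combined with Hardy--Littlewood--Sobolev (\Cref{pr:sob}), H\"older on bounded sets, and exterior decay.

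Setting $F:=\lapms{\tau}\psi$ and using $\laps{\tau}F=\psi$ together with the algebraic identity $\laps{\tau}(\eta_1 F)=\eta_1\laps{\tau}F-[\eta_1,\laps{\tau}]F$ and the cancellation $(1-\eta_1)\psi\equiv 0$, I would first derive
\[
\laps{\tau}g\;=\;\laps{\tau}F-\laps{\tau}(\eta_1 F)\;=\;(1-\eta_1)\psi + [\eta_1,\laps{\tau}]F\;=\;[\eta_1,\laps{\tau}]\lapms{\tau}\psi.
\]
This has the integral form $\laps{\tau}g(x)=c\int\psi(z)K(x,z)\,dz$ with double-kernel
\[
K(x,z)\;=\;\int\frac{\eta_1(y)-\eta_1(x)}{|x-y|^{n+\tau}}|y-z|^{\tau-n}\,dy.
\]
The heart of the proof is then the pointwise bound $|K(x,z)|\aleq|x-z|^{1-n}$. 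For $\tau\in(0,1)$ this is immediate from $|\eta_1(y)-\eta_1(x)|\le\|\nabla\eta_1\|_\infty|x-y|$ combined with the Riesz semigroup convolution identity $|\cdot|^{(1-\tau)-n}\ast|\cdot|^{\tau-n}\aeq|\cdot|^{1-n}$. For $\tau\in[1,2)$ the Lipschitz control is not integrable at $y=x$, so I would use the second-order Taylor expansion $\eta_1(y)-\eta_1(x)=\nabla\eta_1(x)\cdot(y-x)+O(|y-x|^2)$: the quadratic error is integrable and contributes a harmless $|x-z|^{2-n}$ term, while the linear part, combined with the odd-in-$(y-x)$ behavior of $|x-y|^{-n-\tau}$ in the principal-value sense, yields a Coifman--McIntosh--Meyer type commutator of order $\tau-1$ (cf.\ \cite[Theorem 6.1]{LS18}) whose composition with $\lapms{\tau}$ is of order $-1$ and produces the same $|x-z|^{1-n}$ bound.

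With the pointwise bound in hand, Hardy--Littlewood--Sobolev gives $\|\lapms{1}|\psi|\|_{L^q(\R^n)}\aleq\|\psi\|_{L^{p'}(\R^n)}$ with $\frac1q=\frac1{p'}-\frac1n$ when $p'<n$; the cases $p'\ge n$ are handled by the $L^\infty_{loc}$-type embeddings from \Cref{pr:sob}(b), which are more than sufficient for the local statements. In both regimes $\tau\le 1$ and $\tau\ge 1$, the stated hypotheses on $r$ translate to $r'<q$, so H\"older's inequality on the bounded set $\Sigma$ directly delivers part (b). For part (a), I would split $\R^n=B_R\cup(\R^n\setminus B_R)$ with $R$ chosen so that $\supp\eta_1\subset B_R$: on $B_R$ apply (b); on $\R^n\setminus B_R$ the fact that $\eta_1(x)=0$ reduces the commutator integral to $|\laps{\tau}g(x)|\aleq|x|^{-n-\tau}\int_{\supp\eta_1}|F|\aleq|x|^{-n-\tau}\|\psi\|_{L^{p'}(\R^n)}$ by H\"older and the Sobolev embedding of $\lapms{\tau}\psi$. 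The tail integral $\int_{|x|>R}|x|^{-r'(n+\tau)}\,dx$ converges because $r'>1>n/(n+\tau)$, and the bounded multiplier $(1-\eta_2)$ preserves the estimate throughout.

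The principal technical obstacle is the $\tau\in[1,2)$ case of the pointwise kernel bound, where the naive Lipschitz control fails and one must either extract second-order cancellation of $\eta_1$ against the principal-value Riesz-type kernel by hand or quote an off-the-shelf Coifman--McIntosh--Meyer commutator estimate. All remaining steps are routine applications of Hardy--Littlewood--Sobolev, H\"older's inequality, and exterior decay.
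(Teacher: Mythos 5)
Your argument is correct, but it takes a genuinely different route from the paper. The paper splits $\laps{\tau}\brac{(1-\eta_1)\lapms{\tau}\psi}$ by the fractional Leibniz rule into $\brac{\laps{\tau}(1-\eta_1)}\lapms{\tau}\psi$, the vanishing term $(1-\eta_1)\psi$, and the bilinear ``gradient-pairing'' term $\int\frac{(\eta_1(x)-\eta_1(y))(\lapms{\tau}\psi(x)-\lapms{\tau}\psi(y))}{|x-y|^{n+\tau}}dy$; the latter is handled by splitting into a large ball and its exterior and, on the ball, by the pointwise inequality $|u(x)-u(y)|\aleq |x-y|^{\alpha}\brac{\mathcal{M}\laps{\alpha}u(x)+\mathcal{M}\laps{\alpha}u(y)}$ with $u=\lapms{\tau}\psi$ and an adjustable $\alpha$ (taken $>\tau-1$ when $\tau\geq 1$, which is exactly where the two conditions on $r$ enter), followed by the maximal theorem and Sobolev embedding; in part (a) the disjoint support of $1-\eta_2$ and $\psi$ is used to reduce to $L^\infty$ bounds with integrable decay. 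You instead fold everything into the single commutator identity $\laps{\tau}\brac{(1-\eta_1)\lapms{\tau}\psi}=[\eta_1,\laps{\tau}]\lapms{\tau}\psi$, compute the composed kernel $K(x,z)$, and prove the pointwise bound $|K(x,z)|\aleq |x-z|^{1-n}$, concluding by Hardy--Littlewood--Sobolev plus H\"older on bounded sets, with the exterior tail in (a) treated by direct decay (using only boundedness of $1-\eta_2$, so you in fact prove a marginally stronger version of (a)). Your route is more explicit and yields a full one-derivative gain (more than the $\tau$-gain needed when $\tau<1$), while the paper's $\alpha$-flexible maximal-function argument covers all $\tau\in(0,2)$ with one mechanism and avoids the principal-value/second-order Taylor analysis and the composition of homogeneous kernels that your $\tau\in[1,2)$ case requires. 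Two points in your write-up deserve tightening rather than hand-waving: (i) the interchange of the (only principal-value convergent, for $\tau\geq 1$) $y$-integral with the $z$-integral should be justified through your Taylor splitting, and the homogeneity/oddness argument identifying the linear part with $c\,\frac{x-z}{|x-z|^{n}}$ should be stated; (ii) in low dimensions and borderline cases ($n=1$, or $n=2$ for the quadratic remainder, and the semigroup identity $|\cdot|^{(1-\tau)-n}\ast|\cdot|^{\tau-n}$ which needs $1<n$ and boundedness of $\eta_1$ at large $|x-y|$) the bound $|x-z|^{1-n}$ degrades by a logarithm; this is harmless because only the local regime ($x\in\Sigma$, $z\in\supp\psi$) is used and the strict inequalities on $r$ absorb an $\eps$-loss, but it should be said.
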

\begin{proof}
We prove part $(b)$  first. Fix a  large ball $B \subsubset \R^n$ that compactly  contains  $\Sigma$. Then it follows from Leibniz's rule for fractional Laplacian that for $x\in \R^{n}$, 
\[
\begin{split}
 &\laps{\tau} \brac{(1-\eta_1) \lapms{\tau} \psi}(x)\\
 =& [\laps{\tau} (1-\eta_1)]\, \lapms{\tau} \psi(x) + \underbrace{(1-\eta_1)\, \psi(x)}_{=0} + c\int_{\R^n} \frac{(\eta_1(x)-\eta_1(y))(\lapms{\tau}\psi(x)-\lapms{\tau}\psi(y))}{|x-y|^{n+\tau}}\, dy
  \end{split}
\]
since the support of $1-\eta_1$ and $\psi$ do not intersect.  
The right hand side can be rewritten as 
\[
\begin{split}
&\laps{\tau} \brac{(1-\eta_1) \lapms{\tau} \psi}(x)\\
 =& -[\laps{\tau} \eta_1\, ]\lapms{\tau} \psi(x) + c\int_{B} \frac{(\eta_1(x)-\eta_1(y))(\lapms{\tau}\psi(x)-\lapms{\tau}\psi(y))}{|x-y|^{n+\tau}}\, dy\\
 & + c\int_{\R^n \backslash B} \frac{(\eta_1(x)-\eta_1(y))(\lapms{\tau}\psi(x)-\lapms{\tau}\psi(y))}{|x-y|^{n+\tau}}\, dy. 
 \end{split}
\]
We will estimate each term in the right hand side. We begin with the first one. To that end, since $\eta_1 \in C_c^\infty(\R^n)$, $\laps{\tau}\eta_1 \in L^\infty(\R^n)$. Moreover, in view of \Cref{pr:sob}, for any $r' < \frac{np'}{n-\tau p'}$
\begin{equation}\label{eq:742}
 \|\lapms{\tau} \psi\|_{L^{r'}(\Sigma)} \aleq \|\psi\|_{L^{p'}(\R^n)} \aleq \|\psi\|_{L^1(\R^n)} + \|\psi\|_{L^{p'}(\R^n)} \aleq \|\psi\|_{L^{p'}(\R^n)}, 
\end{equation}
where  in the last step, we have used $\psi$ has compact support. That is,
\[
 \|\laps{\tau} \eta_1\, \lapms{\tau} \psi\|_{L^{r'}(\Sigma)} \aleq \|\psi\|_{L^{p'}(\R^n)}.
\]
For the second term, we use, see e.g. \cite[Proposition 6.6.]{S18Arma}, that for any $\alpha < 1$,
\[
 |u(x)-u(y)| \aleq |x-y|^{\alpha}\, \brac{\mathcal{M} \laps{\alpha} u(x)+\mathcal{M} \laps{\alpha} u(y)},
\]
where $\mathcal{M}$ denotes the Hardy-Littlewood maximal function. Then, applying this inequality for $u=\lapms{\tau}\psi$, by the Lipschitz continuity of $\eta_1$, for any $x \in \Sigma$ (observe that also $B$ is bounded) for any $\alpha \in (0,\tau)$ with $1+\alpha-\tau >0$, we get 
\[
\begin{split}
 \int_{B} &\frac{\big (\eta_1(x)-\eta_1(y) \big )(\lapms{\tau}\psi(x)-\lapms{\tau}\psi(y))}{|x-y|^{n+\tau}}\, dy\\
& \aleq C \int_{B} |x-y|^{1+\alpha-\tau-n}\, \brac{\mathcal{M} \lapms{\tau-\alpha} \psi(x)+\mathcal{M} \lapms{\tau-\alpha} \psi(y)}\, dy\\
 &\aleq C\, \brac{\mathcal{M} \lapms{\tau-\alpha} \psi(x) + \lapms{1+\alpha-\tau}\brac{\mathcal{M} \lapms{\tau-\alpha} \psi}(x)},
\end{split}
\]
where $C$ depends only on $\|\eta_1\|_{\lip},\alpha,2s,t,\diam(B),$ and $\diam(\Sigma)$. 
If $\tau \leq 1$, $1+\alpha-\tau >0$ is equivalent to $\alpha > 0$. In that case, whenever $r' < \frac{np'}{n-\tau p'}$ we can choose $\alpha$ above so that $r' < \frac{np'}{n-(\tau-\alpha)p'}$.
If $\tau \geq 1$, we need to choose $\alpha>\tau -1 >0$, so whenever $r' < \frac{np'}{n-p'}$ we can find an $\alpha$ satisfying this condition so that $r' < \frac{np'}{n-(\tau-\alpha)p'}$. Then by Sobolev embedding, maximal theorem
\[
\left \|x \mapsto \int_{B} \frac{\big (\eta_1(x)-\eta_1(y) \big )(\lapms{\tau}\psi(x)-\lapms{\tau}\psi(y))}{|x-y|^{n+\tau-1}}\, dy \right \|_{L^{r'}(\Sigma)} \aleq \|\psi\|_{L^{p'}(\R^n)}.
\]
It remains to estimate for $x \in \Sigma$,
\[
\begin{split}
\left |\int_{\R^n \backslash B} \frac{\big (\eta_1(x)-\eta_1(y) \big )(\lapms{\tau}\psi(x)-\lapms{\tau}\psi(y))}{|x-y|^{n+\tau}}\, dy\right|
&\aleq\int_{\R^n \backslash B} \frac{\lapms{\tau}|\psi|(x)+|\lapms{\tau}|\psi|(y)}{1+|y|^{n+\tau}}\, dy\\
&\aleq\lapms{\tau} |\psi|(x) + \|\lapms{\tau}|\psi|\|_{L^\infty(\R^n \backslash B)}.
\end{split}
\]
The first term we have already estimated, \eqref{eq:742}. For the second term, observe that by the integral representation of $\lapms{\tau}$ and the support of $\psi$, we have
\[
 \|\lapms{\tau}|\psi|\|_{L^\infty(\R^n \backslash B)} \aleq \|\psi\|_{L^1(\R^n)} \aleq \|\psi\|_{L^{p'}(\R^n)}.
\]
Thus, for any $r' < \frac{np'}{n-\tau p'}$,
\[
 \left \|x \mapsto \int_{\R^n \backslash B} \frac{\big (\eta_1(x)-\eta_1(y) \big )(\lapms{\tau}\psi(x)-\lapms{\tau}\psi(y))}{|x-y|^{n+\tau}}\, dy\right\|_{L^{r'}(\R^n)} \aleq \|\psi\|_{L^{p'}(\R^n)}.
\]
This concludes the proof of part $(b)$.

Next we prove part (a). 
As before, we split as 
\[
\begin{split}
 &\laps{\tau} \brac{(1-\eta_1) \lapms{\tau} \psi}\\
 =& \brac{\laps{\tau} (1-\eta_1)}\, \lapms{\tau} \psi + \underbrace{(1-\eta_1)\, \psi}_{=0} + c\int_{\R^n} \frac{(\eta_1(\cdot)-\eta_1(y))(\lapms{\tau}\psi(\cdot)-\lapms{\tau}\psi(y))}{|\cdot-y|^{n+\tau}}\, dy\\
 \end{split}
\]
Observe that by the disjoint support of $\psi$ and $1-\eta_2$,
\[
 \|(1-\eta_2) \lapms{2s-t} \psi\|_{L^\infty} \aleq \|\psi\|_{L^1(\R^n)} \aleq \|\psi\|_{L^{p'}(\R^n)}.
\]
Moreover, $\laps{2s-t} (1-\eta_1) = \laps{2s-t} \eta_1 \in L^1 \cap L^\infty(\R^n)$ since $\eta_1 \in C_c^\infty(\R^n)$. Consequently, for any $r' \in [1,\infty]$,
\[
 \|(1-\eta_2)\brac{\laps{2s-t} (1-\eta_1)}\, \lapms{2s-t} \psi\|_{L^{r'}(\R^n)} \aleq \|\psi\|_{L^{p'}(\R^n)}.
\]
On the other hand, for $x \in \supp (1-\eta_2)$ and $y \in \supp(\eta_1)$ we have $\eta_1(x) = 0$ and $|x-y| \ageq 1+|x|$. Thus,
\[
\begin{split}
 &\left |(1-\eta_2(x))\int_{\R^n} \frac{\big (\eta_1(x)-\eta_1(y) \big )(\lapms{2s-t}\psi(x)-\lapms{2s-t}\psi(y))}{|x-y|^{n+2s-t}}\, dy\right|\\
 \aleq &\frac{1}{1+|x|^{n+2s-t}}\, |1-\eta_2(x)|\, \brac{|\lapms{2s-t}\psi(x)| + \int_{\Omega} |\lapms{2s-t}\psi(y)|dy}\\
 \aleq &\frac{1}{1+|x|^{n+2s-t}}\, |1-\eta_2(x)|\, \|\psi\|_{L^1(\R^n)}\\
 \aleq &\frac{1}{1+|x|^{n+2s-t}}\, \|\psi\|_{L^{p'}(\R^n)}.
 \end{split}
\]
The right-hand side is now integrable for any $r' \geq 1$, and \eqref{eq:alskd2} is established.

\end{proof}
\section{A commutator estimate}\label{s:commutator}
As we described in the introduction, the crucial idea of this work is to compare the two differential operators: $\mathcal{L}^{s}_{\R^n}$ defined by, for $s \in (0,1)$,
\begin{equation}\label{eq:op1}
\langle \mathcal{L}^{s}_{\R^n} u, \varphi \rangle=\int_{\R^n}\int_{\R^n} K(x,y)\frac{ (u(x)-u(y))\, (\varphi(x)-\varphi(y))}{|x-y|^{n+2s}}\, dx\, dy 
\end{equation}
and  $L^{s,t}_{diag}$ defined by, for $t\in[s, 2s)$,
to the operator
\begin{equation}\label{eq:op2}
\langle L^{s,t}_{diag}u,\varphi  \rangle=\int_{\R^n}K(z,z) \laps{t} u\, \laps{2s-t} \varphi \,dx,
\end{equation}
for $u$ and $\varphi$ in appropriate spaces. 
Observe that if $K$ is constant, then the two operators are the same up to a multiplicative constant. Indeed, for $t\in[s, 2s)$
\[
\begin{split}
 &\int_{\R^n}\int_{\R^n} \frac{(u(x)-u(y))\, (\varphi(x)-\varphi(y))}{|x-y|^{n+2s}}\, dx\, dy= C\int_{\R^n} \laps{2s} u\,  \varphi dx= C\int_{\R^n} \laps{t} u \laps{2s-t} \varphi dx.
 \end{split}
\]
The first equality follows from the characterization of the fractional Laplacian, \eqref{eq:fraclap} and Fubini's theorem. The second equality follows from the Fourier transform characterization of fractional Laplacian, \eqref{eq:fraclapFT} and Plancherel's theorem. 

In this section, we prove a fundamental estimate for $D_{s,t}(u,\varphi) $, introduced as,
\begin{equation}\label{defn-Lcommutator}
D_{s,t}(u,\varphi) = \langle L^{s,t}_{diag}u, \varphi\rangle  - \Gamma \langle \mathcal{L}^{s}_{\R^{n}}u, \varphi\rangle 
\end{equation}
that establishes the difference $\mathcal{L}^{s}_{\R^n}-L^{s,t}_{diag}u$ is a lower order differential operator when $K$ is bounded and  uniformly H\"older continuous. In \eqref{defn-Lcommutator}, $\Gamma$ is the universal constant that ensures that $D_{s,t}(u,\varphi) = 0$ whenever $K$ is a constant kernel, 

This allows us to obtain estimates for the operator in \eqref{eq:op1} from estimates for the operator \eqref{eq:op2}, for which corresponding estimates are relatively easy to obtain  as we will see in \Cref{s:fraclap}. The main theorem of this section is the following. 
\begin{theorem}\label{th:reformulationcommie}  
Let $s \in (0,1)$, $t \in (0,1)$ such that $2s-t \in (0,1)$. Suppose also that $\alpha\in (0, 1)$ and $\Lambda>0$ are given.  Then, there exist  constants $\sigma_0 \in (0, \alpha]$  and $\Gamma = \Gamma(n,s,t) \in \R$ such that the following holds.
Let $K = K(x,y) \in \mathcal{C}(\alpha, \Lambda)$, where
\[
\mathcal{C}(\alpha, \Lambda) = \left \{ K: \R^n \times \R^n \to \R: |K(x,y)| \leq \Lambda,\text{ and \eqref{H-Continuity} is satisfied}
\right \}
\]
 Then for all $\sigma \in (0,\sigma_0)$ and all $\eps \in (0,\frac{\sigma}{4})$  there exists  a constant $C= C(\Lambda,\sigma,\eps)$ such that, 
\[
|D_{s,t}(u,\varphi)| \leq C\, \int_{\R^n} \lapms{\sigma-\eps} |\laps{t-\eps} u|(x)\, |\laps{2s-t} \varphi|(x)\, dx
\]
and
\[
|D_{s,t}(u,\varphi)| \leq C\, \int_{\R^n} \lapms{\sigma-\eps} |\laps{t} u|(x)\, |\laps{2s-t-\eps} \varphi|(x)\, dx
\]
for all $u \in H^{t,p}(\R^n)$ and any $\varphi \in C_c^\infty(\R^n)$. The constant $\sigma_0 \in (0,\gamma]$ depends on $s$ and $t$ in the following way: for any $\theta > 0$, if 
\[
 s \in (\theta,1-\theta),\ t \in (\theta,1-\theta),\ 2s-t \in (\theta,1-\theta)
\]
then $\sigma_0$ can be chosen dependent only on $\theta$ and $\alpha$ (but not further depending on $s$ and $t$).
\end{theorem}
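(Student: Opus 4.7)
The plan is to exploit the vanishing of $D_{s,t}(u,\varphi)$ when $K$ is constant, together with the H\"older regularity of $K$, in order to trade a factor $|x-y|^\alpha$ for a gain of $\sigma<\alpha$ fractional derivatives that can be redistributed onto one of $u$ or $\varphi$. First, I would symmetrically decompose the kernel: set $\kappa(x):=K(x,x)$ and $\widetilde K(x,y):=K(x,y)-\tfrac12(\kappa(x)+\kappa(y))$. Symmetry of $K$ makes $\widetilde K$ symmetric and vanishing on the diagonal, and \eqref{H-Continuity} yields $|\widetilde K(x,y)|\leq 2\Lambda|x-y|^\alpha$. Using the $x\leftrightarrow y$ symmetry of the integrand in $\langle\mathcal L^{s}_{\R^n}u,\varphi\rangle$, the $\tfrac12(\kappa(x)+\kappa(y))$-piece can be rewritten as an integral with kernel $\kappa(x)$ alone, leading to the splitting
\begin{equation*}
D_{s,t}(u,\varphi)=\underbrace{\langle L^{s,t}_{\mathrm{diag}}u,\varphi\rangle-\Gamma\!\int\!\int\kappa(x)\tfrac{(u(x)-u(y))(\varphi(x)-\varphi(y))}{|x-y|^{n+2s}}\,dx\,dy}_{=:\,\mathrm{I}}\;-\;\Gamma\!\int\!\int\widetilde K(x,y)\tfrac{(u(x)-u(y))(\varphi(x)-\varphi(y))}{|x-y|^{n+2s}}\,dx\,dy,
\end{equation*}
where $\Gamma=\Gamma(n,s,t)$ is the unique constant making $\mathrm{I}\equiv 0$ whenever $\kappa$ is constant.

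For the $\widetilde K$-piece, the extra $|x-y|^\alpha$ reduces the effective singularity from $|x-y|^{n+2s}$ to $|x-y|^{n+2s-\alpha}$. I would then apply the Hedberg-type pointwise estimate already used in the proof of Proposition~\ref{prop:disjoint-support},
\begin{equation*}
|f(x)-f(y)|\aleq|x-y|^\beta\bigl(\mathcal M\laps{\beta}f(x)+\mathcal M\laps{\beta}f(y)\bigr),\qquad 0<\beta<1,
\end{equation*}
to $u$ with $\beta=t-\eps$, absorbing a further $|x-y|^{t-\eps}$ and leaving an integrand controlled by $|x-y|^{\sigma-\eps-n-(2s-t)}\mathcal M\laps{t-\eps}u\cdot|\varphi(x)-\varphi(y)|$ for $\sigma<\alpha$. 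Integrating in $y$ against the remaining $\varphi$-difference and identifying the resulting convolution as a Riesz potential of $|\laps{2s-t}\varphi|$ (using the integral representation of $\laps{2s-t}$) produces the desired pointwise bound; the maximal functions collected along the way are absorbed onto the Riesz-potential side by standard pointwise/$L^p$ estimates. The second variant --- with $\laps{t}u$ paired against $\laps{2s-t-\eps}\varphi$ --- is obtained by running the exact same argument with the roles of $u$ and $\varphi$ exchanged, which the bilinear structure of $D_{s,t}$ and the symmetric splitting accommodate. The requirement $\beta<1$ bounded away from $1$ forces $\sigma_0\leq\alpha$, and the chain of exponents shows that $\sigma_0$ depends only on $\theta$ and $\alpha$ as claimed.

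For the diagonal term $\mathrm{I}$, linearity in $\kappa$ combined with $\mathrm{I}\equiv 0$ for constant $\kappa$ displays it as a commutator between multiplication by $\kappa$ and the fractional-Laplacian pairing. Using the pointwise integral representations of $\laps{t}$ and $\laps{2s-t}$ (valid because $t,2s-t\in(0,1)$) together with Fubini, $\mathrm{I}$ can be rewritten as a finite sum of terms of the form $\int[\kappa,\laps{t}]u\cdot\laps{2s-t}\varphi\,dx$ and $\int\laps{t}u\cdot[\kappa,\laps{2s-t}]\varphi\,dx$. Since $\kappa\in C^\alpha(\R^n)$ via \eqref{H-Continuity} restricted to the diagonal, a fractional Leibniz / Coifman--McIntosh--Meyer commutator estimate, in the spirit of the one invoked in the proof of Lemma~\ref{la:cutoffarg}, yields the pointwise bound $|[\kappa,\laps{t}]u(x)|\aleq\lapms{\sigma-\eps}|\laps{t-\eps}u|(x)$, and pairing with $|\laps{2s-t}\varphi|$ closes the estimate.

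The principal technical difficulty is the commutator identification for $\mathrm{I}$: extracting a clean commutator structure from the triple integral representation of $\int\kappa(z)\,\laps{t}u(z)\,\laps{2s-t}\varphi(z)\,dz$ so that the known H\"older-multiplier commutator estimates apply \emph{verbatim}, together with the meticulous bookkeeping required to push all maximal functions onto the Riesz-potential side and to verify that the constants --- and the admissible range of $\sigma$ --- depend only on $\theta$ and $\alpha$ (and not, say, degenerately on the proximity of $s, t, 2s-t$ to $0$ or $1$).
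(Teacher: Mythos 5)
Your treatment of the diagonal part $\mathrm{I}$ is where the argument breaks. Carrying out the rewriting you propose, with $\kappa(x):=K(x,x)$ and the pointwise identity $c\int\frac{(u(x)-u(y))(\varphi(x)-\varphi(y))}{|x-y|^{n+2s}}\,dy=u\laps{2s}\varphi+\varphi\laps{2s}u-\laps{2s}(u\varphi)$, one finds (with $\Gamma$ normalized so that $\mathrm{I}$ vanishes for constant $\kappa$)
\[
\mathrm{I}=-\tfrac12\int_{\R^n}[\laps{t},\kappa]u\;\laps{2s-t}\varphi
-\tfrac12\int_{\R^n}[\laps{2s-t},\kappa]\varphi\;\laps{t}u
+\tfrac12\int_{\R^n}\kappa\,\laps{2s}(u\varphi).
\]
The last term is not of either commutator form you allow, so the claimed reduction of $\mathrm{I}$ to ``a finite sum of terms of the form $\int[\kappa,\laps{t}]u\cdot\laps{2s-t}\varphi$ and $\int\laps{t}u\cdot[\kappa,\laps{2s-t}]\varphi$'' is false as stated. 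Worse, this leftover is exactly the dangerous piece: it equals $\tfrac12\langle\laps{2s}\kappa,u\varphi\rangle$, i.e.\ all $2s$ derivatives land on $u\varphi$ and none can be moved onto $\kappa$, which is only $C^\alpha$ with $\alpha$ possibly far below $s,t,2s-t$. No Coifman--McIntosh--Meyer or fractional Leibniz estimate applies to it ``verbatim'', and controlling it by $\int\lapms{\sigma-\eps}|\laps{t-\eps}u|\,|\laps{2s-t}\varphi|$ requires precisely the kind of bilinear kernel analysis your splitting was meant to avoid. This is a genuine gap, not bookkeeping.

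Two further problems in the $\widetilde K$-piece. The Hedberg bound $|u(x)-u(y)|\aleq|x-y|^{t-\eps}\brac{\mathcal{M}\laps{t-\eps}u(x)+\mathcal{M}\laps{t-\eps}u(y)}$ inserts maximal functions, and there is no pointwise way to trade $\lapms{\sigma-\eps}\brac{\mathcal{M}\laps{t-\eps}u}$ for $\lapms{\sigma-\eps}|\laps{t-\eps}u|$; at best you prove a weakened version of the stated inequality (enough for the later $L^p$ applications via the maximal theorem, but not the theorem as written). Also the gain $|\widetilde K(x,y)|\aleq|x-y|^\alpha$ can only be exploited for $|x-y|$ small: when $\alpha-\eps>2s-t$ the kernel $|x-y|^{\alpha+t-\eps-2s-n}$ fails to be integrable at infinity, so a case analysis in the relative sizes of $|x-y|$ and the distances to the dual variables is unavoidable. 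This is what the paper actually does, by a different route: it avoids your splitting entirely, writes $u$ and $\varphi$ as Riesz potentials of $\laps{t}u$ and $\laps{2s-t}\varphi$, expresses both $\langle\mathcal{L}^s_{\R^n}u,\varphi\rangle$ and $\langle L^{s,t}_{diag}u,\varphi\rangle$ as fourfold integrals against the same kernel $\kappa_0(x,y,z_1,z_2)$ (\Cref{la:integration}), so that $D_{s,t}$ carries the single weight $K(x,y)-K(z_1,z_1)$, and then runs the region-by-region estimates of \Cref{la:Kest} and \Cref{la:Kest2}. To repair your proposal you would need to supply an argument of comparable depth for the term $\int\kappa\,\laps{2s}(u\varphi)$ and replace the maximal-function shortcut by such a kernel analysis.
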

Observe that $\mathcal{K}(\alpha,\lambda,\Lambda) \subset \mathcal{C}(\alpha,\Lambda)$, so \Cref{th:reformulationcommie} is applicable in our situation.
\subsection{Some preliminary estimates}
In this subsection we present some preliminary estimates that will be used in the proof of \Cref{th:reformulationcommie}. 

First we observe that the exponent of the H\"older continuity of $K$ can be chosen to be very small, namely
\begin{lemma}\label{hoelderkernelsmall}
Let $0<\alpha<\beta$ and $\Lambda > 0$ then there exists $\Lambda' > 0$ such that whenever $K \in \mathcal{C}(\beta, \Lambda)$ then $K \in \mathcal{C}(\alpha, \Lambda')$
\end{lemma}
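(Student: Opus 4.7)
The plan is to reduce the claim to a simple case split based on whether $|x-y|$ is small or large. The class $\mathcal{C}(\beta,\Lambda)$ encodes two pieces of information: the kernel is bounded by $\Lambda$ and it satisfies the Hölder estimate \eqref{H-Continuity} with exponent $\beta$ and constant $\Lambda$. The goal is to leverage boundedness on the large scale and the stronger Hölder regularity on the small scale.

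First, I would fix $K \in \mathcal{C}(\beta,\Lambda)$ and arbitrary $z, x, y \in \R^n$. Consider the two regimes separately. If $|x-y| \leq 1$, then since $\alpha < \beta$ we have $|x-y|^\beta \leq |x-y|^\alpha$, so
\[
|K(z,y)-K(z,x)| \leq \Lambda\, |x-y|^\beta \leq \Lambda\, |x-y|^\alpha.
\]
If instead $|x-y| > 1$, then $|x-y|^\alpha \geq 1$, and using $\|K\|_{L^\infty} \leq \Lambda$ gives
\[
|K(z,y)-K(z,x)| \leq 2\Lambda \leq 2\Lambda\, |x-y|^\alpha.
\]

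Combining the two cases, the choice $\Lambda' := 2\Lambda$ yields
\[
\sup_{z \in \R^n} |K(z,y)-K(z,x)| \leq \Lambda'\, |x-y|^\alpha \qquad \text{for all } x,y \in \R^n,
\]
which is exactly \eqref{H-Continuity} with exponent $\alpha$ and constant $\Lambda'$. Since $\|K\|_{L^\infty} \leq \Lambda \leq \Lambda'$, we conclude $K \in \mathcal{C}(\alpha,\Lambda')$. There is no main obstacle here: the statement is essentially a tautology for bounded functions, and the only subtlety is remembering to use the $L^\infty$ bound on the far-field regime to compensate for the fact that $|x-y|^\alpha$ grows more slowly than $|x-y|^\beta$.
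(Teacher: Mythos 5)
Your proof is correct: the case split at $|x-y|=1$, using the $\beta$-Hölder bound on the near regime and the $L^\infty$ bound $\|K\|_{L^\infty}\le\Lambda$ on the far regime, gives exactly \eqref{H-Continuity} with exponent $\alpha$ and constant $\Lambda'=2\Lambda$, and the boundedness requirement for membership in $\mathcal{C}(\alpha,\Lambda')$ is trivially preserved. The paper leaves this lemma as an exercise, and your argument is precisely the standard one intended.
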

This is an easy exercise which we leave to the reader.

Secondly we recall a quite useful application of the fundamental theorem of calculus.
\begin{lemma}\label{la:fundthm}
For any $r \in\R$, there exists a constant $C=C(r)$ such that the following holds. Let $a,b \in \R^n\backslash \{0\}$ with $|a-b|\aleq \min\{|a|,|b|\}$. 
Then for any $\sigma \in [0,1]$ we have
\[
 \abs{ |a|^r - |b|^{r} } \leq C\, |a-b|^{\sigma}\, \min\left \{|a|^{r-\sigma},|b|^{r-\sigma}\right \}.
\]
\end{lemma}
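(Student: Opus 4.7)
The plan is to interpolate between the two extremal cases $\sigma = 0$ and $\sigma = 1$. The first observation is that the hypothesis $|a-b| \aleq \min\{|a|,|b|\}$ forces $|a| \aeq |b|$, with comparability constants depending only on the implicit $\aleq$-constant; in particular $\min\{|a|^\rho,|b|^\rho\} \aeq |a|^\rho \aeq |b|^\rho$ for every $\rho \in \R$, so I may freely replace $\min\{|a|^{r-\sigma},|b|^{r-\sigma}\}$ in the conclusion by either of the two factors without changing the estimate.

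The $\sigma = 0$ case is then immediate: $\abs{|a|^r - |b|^r} \leq |a|^r + |b|^r \aleq \min\{|a|^r,|b|^r\}$ by comparability.

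For $\sigma = 1$ I would avoid the tempting chord $t \mapsto a + t(b-a)$ in $\R^n$, because for large values of the implicit constant in the hypothesis this segment could pass arbitrarily close to the origin (think of $a$ and $b$ nearly antiparallel) and $t \mapsto |a+t(b-a)|^{r-2}$ could blow up along it. Instead, I would apply the single-variable fundamental theorem of calculus to $g(s) = s^r$ on the interval joining $|a|$ and $|b|$, which together with the triangle inequality $\bigl||a|-|b|\bigr| \leq |a-b|$ and the comparability $|a| \aeq |b|$ yields
\[
 \abs{|a|^r - |b|^r} \aleq |a-b|\, \min\{|a|,|b|\}^{r-1}.
\]

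For general $\sigma \in (0,1)$ I would then interpolate multiplicatively: factoring $\abs{|a|^r - |b|^r}$ as $\abs{|a|^r - |b|^r}^{\sigma}\, \abs{|a|^r - |b|^r}^{1-\sigma}$ and inserting the two extremal bounds produces the exponent $\sigma(r-1) + (1-\sigma) r = r - \sigma$ on $\min\{|a|,|b|\}$, which is exactly what is claimed. There is no serious obstacle in this plan; the one point requiring mild care is the decision to run the FTC radially (on $s \mapsto s^r$) rather than along the chord in $\R^n$, which keeps the argument uniform in the size of the implicit constant of the hypothesis and allows $r$ to be any real number, positive or negative.
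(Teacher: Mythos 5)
Your proof is correct. The key difference from the paper lies in the $\sigma=1$ step: the paper reduces to the endpoint case via $|a-b|\leq |a-b|^{\sigma}|a|^{1-\sigma}$, then proves the Lipschitz bound by dividing through by $|b|^{r}$, placing $a/|b|$ and $b/|b|$ in a fixed annulus $B_{R_2}\setminus B_{R_1}$, and running the fundamental theorem of calculus along a curve $\gamma$ contained in that annulus with $|\gamma'|\aeq |u-v|$ — this is exactly how the paper sidesteps the chord-through-the-origin issue you flag. You instead exploit that $\bigl||a|^r-|b|^r\bigr|$ depends only on the moduli, apply the one-dimensional FTC (or mean value theorem) to $s\mapsto s^{r}$ on the interval between $|a|$ and $|b|$, and finish with the reverse triangle inequality $\bigl||a|-|b|\bigr|\leq |a-b|$; this is more elementary, avoids the rescaling and the geometric construction of the curve, and handles all real $r$ with the same two lines. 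Your multiplicative interpolation between the $\sigma=0$ and $\sigma=1$ bounds is essentially the same mechanism as the paper's inequality $|a-b|\aleq |a-b|^{\sigma}|a|^{1-\sigma}$, just packaged after rather than before the endpoint estimate. One point both treatments share and that you state correctly: the constant necessarily depends on the implicit constant in the hypothesis $|a-b|\aleq\min\{|a|,|b|\}$ (through the comparability $|a|\aeq|b|$), which is harmless for the way the lemma is used in the paper.
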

\begin{proof}
We may assume that $r \neq 0$ otherwise the inequality it trivial.

If $|a-b|\aleq \min\{|a|,|b|\}$, 
then $|a| \aeq |b|$ (with a uniform constant), that is
\[
 \min\left \{|a|^{r-\sigma},|b|^{r-\sigma}\right \} \aeq |a|^{r-\sigma}.
\]
Also for any $\sigma \in [0,1]$ we have
\[
 |a-b| \aleq |a-b|^{\sigma} |a|^{1-\sigma}.
\]
Using the above inequality, to complete the proof it suffices to show that 
\[
 \abs{ |a|^r - |b|^{r} } \aleq |a-b|\, |b|^{r-1}.
\]
To that end, dividing by $|b|^{r}$, the above is equivalent to showing 
\[
 \abs{ \abs{\frac{a}{|b|}}^r - \abs{\frac{b}{|b|}}^{r} } \aleq  \abs{\frac{a}{|b|}-\frac{b}{|b|}}
\]
Observe that since $|a| \aeq |b|$, there are uniform constants $0<R_1 <1< R_2<\infty$ such that both $\frac{a}{|b|}$ and ${\frac{b}{|b|}}$ are in $A:= B_{R_2}(0) \backslash B_{R_1}(0)$. So, the problem is now reduced to showing 
\[
 \abs{ \abs{u}^r - \abs{v}^{r} } \leq C\, \abs{u-v} \quad \forall u,v \in A.
\]
Since $A$ is an annulus, for any $u,v \in A$ there exists a curve $\gamma \subset A$ with $\gamma(0) = u$, $\gamma(1) = v$, $|\gamma'| \aeq  |u-v|$ -- with constants depending only on $r_1$ and $r_2$ (and thus uniform). Set 
\[
\eta(t) := |\gamma(t)|^\alpha.
\]
Then, the fundamental theorem of calculus implies
\[
\abs{ \abs{u}^\alpha - \abs{v}^{\alpha} } \leq \sup_{t \in [0,1]} |\eta'(t)| \aleq |\gamma(t)|^{\alpha-1} |\gamma'(t)| \aleq |u-v|.
\]
\end{proof}
%
%
The following Lemma was essentially proven in \cite[Proposition 6.3.]{S15}. 
\begin{lemma}\label{la:Kest}
Let $m \in (0,n)$, $\alpha \in (0,1)$, and $\lambda,\Lambda>0$ are given. 
Then for any $\beta$ such that
\[
\alpha< \beta < \min\{m+\alpha,1\}
\]
and any $K\in \mathcal{C}(\alpha, \Lambda)$ we have
\[
 \left |K(x,y) - K(z,z) \right | \ \left | |x-z|^{m-n} -|y-z|^{m-n} \right | \aleq |x-y|^{\beta} \brac{|x-z|^{m+\alpha-\beta-n}+|y-z|^{m+\alpha-\beta-n}}.
\]
\end{lemma}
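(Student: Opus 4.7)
The plan is to bound the two factors on the left-hand side separately and then combine them via a dichotomy on the geometry of the triple $(x,y,z)$.

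For the kernel factor, I would insert $K(x,z)$ as an intermediate point and apply \eqref{H-Continuity} first in the second variable and then (using symmetry of $K$, which is natural in our setting) in the first, to obtain
\[
|K(x,y)-K(z,z)| \aleq |y-z|^\alpha + |x-z|^\alpha.
\]

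For the potential-difference factor I would consider two regimes. \emph{Non-degenerate regime:} $|x-y|\leq \tfrac{1}{4}\min\{|x-z|,|y-z|\}$. The triangle inequality gives $|x-z|\aeq|y-z|$, and \Cref{la:fundthm} applied with $a=x-z$, $b=y-z$, $r=m-n$, and $\sigma=\beta\in(0,1)$ yields
\[
\bigl||x-z|^{m-n}-|y-z|^{m-n}\bigr| \aleq |x-y|^\beta \min\bigl\{|x-z|^{m-n-\beta},|y-z|^{m-n-\beta}\bigr\}.
\]
Multiplying by the kernel estimate and using comparability of $|x-z|$ and $|y-z|$ produces $|x-y|^\beta |x-z|^{m+\alpha-\beta-n}$, which is dominated by the claimed right-hand side.

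\emph{Degenerate regime:} $|x-y|> \tfrac{1}{4}\min\{|x-z|,|y-z|\}$. Assume without loss of generality $|x-z|\leq|y-z|$, so $|x-z|\aleq|x-y|$, and by triangle inequality $|y-z|\aleq|x-y|$ as well. I would apply the trivial bound $||x-z|^{m-n}-|y-z|^{m-n}|\leq |x-z|^{m-n}+|y-z|^{m-n}$ and multiply by the refined kernel bound $|x-z|^\alpha + |y-z|^\alpha$, producing a sum of four terms of the form $|x-z|^{a_1}|y-z|^{a_2}$. Each is absorbed into $|x-y|^\beta(|x-z|^{m+\alpha-\beta-n}+|y-z|^{m+\alpha-\beta-n})$ by splitting $|x-y|^\beta = |x-y|^\alpha \cdot |x-y|^{\beta-\alpha}$: the first factor comes from a H\"older gain $|\cdot|^\alpha \leq |x-y|^\alpha$, and the second from $|\cdot|^{\beta-\alpha} \leq |x-y|^{\beta-\alpha}$, which crucially uses $\beta>\alpha$.

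The main obstacle is the degenerate regime, where one of $|x-z|,|y-z|$ may become arbitrarily small compared to $|x-y|$. The identity $\alpha+(\beta-\alpha)=\beta$ is what makes the exponent count work, which is precisely why the hypothesis $\alpha<\beta$ is essential. The upper constraint $\beta<\min\{m+\alpha,1\}$ plays two roles: it makes $\sigma=\beta$ admissible in \Cref{la:fundthm}, and it keeps the resulting exponent $m+\alpha-\beta-n$ above $-n$, which will be needed for local integrability of the bound in the downstream applications.
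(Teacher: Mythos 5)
Your proof is correct and takes essentially the same route as the paper's: a H\"older bound $|K(x,y)-K(z,z)|\aleq |x-z|^\alpha+|y-z|^\alpha$, a dichotomy on $|x-y|$ versus $\min\{|x-z|,|y-z|\}$, \Cref{la:fundthm} in the comparable regime, and exponent trading via $|x-z|,|y-z|\aleq |x-y|$ together with $\beta>\alpha$ in the degenerate regime. The paper only differs cosmetically (three cases instead of your two, and $\sigma=1$ in \Cref{la:fundthm} followed by trading $|x-y|^{1-\beta}\aleq|x-z|^{1-\beta}$ instead of taking $\sigma=\beta$ directly), and your explicit appeal to symmetry of $K$ mirrors the paper's own implicit use of H\"older control in the first variable, even though $\mathcal{C}(\alpha,\Lambda)$ does not formally require symmetry.
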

\begin{proof}
We first observe that we can estimate the difference $ \left |K(x,y) - K(z,z) \right |$ in three different ways \begin{equation}\label{eq:Kest}
 \left |K(x,y) - K(z,z) \right | \aleq \begin{cases}
                                           |x-z|^\alpha +|y-z|^\alpha\\
                                           |x-y|^\alpha +|x-z|^\alpha\\
                                           |x-y|^\alpha +|y-z|^\alpha.\\
                                          \end{cases}
\end{equation}
The first one can be obtained  by adding and subtracting $K(x,z):$ \[
 \left |K(x,y) - K(z,z) \right |  \leq \left |K(x,y) - K(x,z) \right |+ \left |K(x,z) - K(z,z) \right | \aleq |y-z|^\alpha + |x-z|^\alpha.
\]
The second and third forms are obtained in similar ways as  
\[
\begin{split}
 \left |K(x,y) - K(z,z) \right |  \leq& \left |K(x,y) - K(x,x) \right |+ \left |K(x,x) - K(x,z) \right |+\left| K(x,z) - K(z,z) \right |\\
  \aleq&|x-y|^\alpha+ 2|x-z|^\alpha,
 \end{split}
\]
and 
\[
\begin{split}
 \left |K(x,y) - K(z,z) \right |  \leq& \left |K(x,y) - K(y,y) \right |+ \left |K(y,y) - K(y,z) \right |+\left| K(y,z) - K(z,z) \right |\\
  \aleq&|x-y|^\alpha+ 2|y-z|^\alpha.
 \end{split}
\]
 The entire expression $\left|K(x,y) - K(z,z) \right | \ \left | |x-z|^{m-n} -|y-z|^{m-n}\right|$ can now be estimated by considering these three cases. To that end, 
first, if $|x-y| < \frac{1}{2}|x-z|$ or $|x-y| < \frac{1}{2}|y-z|$ then
\[
 |x-z| \aeq |y-z|,
\]
and thus by the mean value theorem, \Cref{la:fundthm},
\[
 \left | |x-z|^{m-n} -|y-z|^{m-n} \right | \aleq |x-y|\, |x-z|^{m-1-n}.
\]
So we take the first option in the estimate for $K$ \eqref{eq:Kest} and have under our assumptions on $x,y,z$ (since $\beta \leq 1$
\[
 \left |K(x,y) - K(z,z) \right |\, \left | |x-z|^{m-n} -|y-z|^{m-n} \right |\aleq |x-y| |x-z|^{m+\alpha-1-n} \aleq |x-y|^\beta |x-z|^{m+\alpha-\beta-n}
\]
Second, if $|x-y| \geq \frac{1}{2}|x-z|$ and $|x-y| \geq \frac{1}{2}|y-z|$ and $|x-z| < |y-z|$, we have
\[
 \left | |x-z|^{m-n} -|y-z|^{m-n} \right | \aeq |x-z|^{m-n}.
\]
In this case we choose the second estimate for the estimate of $K$ \eqref{eq:Kest} and obtain (since $\beta \in (\alpha,m+\alpha)$,
\[
 \begin{split}
\left |K(x,y) - K(z,z) \right |\, \left | |x-z|^{m-n} -|y-z|^{m-n} \right |
 \aleq& |x-y|^{\alpha}\, |x-z|^{m-n}+ |x-z|^{\alpha+m-n} \\
 \aleq& |x-y|^{\beta}\, |x-z|^{\alpha+m-\beta -n}.
 \end{split}
\]
Finally, if $|x-y| \geq \frac{1}{2}|x-z|$ and $|x-y| \geq \frac{1}{2}|y-z|$ but $|x-z| \geq |y-z|$, we have by a symmetric argument
\[
 \begin{split}
\left |K(x,y) - K(z,z) \right |\, \left | |x-z|^{m-n} -|y-z|^{m-n} \right |
 \aleq& |x-y|^{\beta}\, |y-z|^{\alpha+m-\beta -n}.
 \end{split}
\]
\end{proof}

\begin{lemma}\label{la:Kest2}Let $\lambda, \Lambda >0$ be given. 
Suppose also that $s,t \in (0,1)$ with $2s-t \in (0,1)$ in the following form: assume that for some $\theta \in (0,1)$, 
\begin{equation}\label{eq:Kest2:st2smt}
 s \in (\theta,1-\theta),\ t \in (\theta,1-\theta),\ 2s-t \in (\theta,1-\theta). 
\end{equation}
Then there exists $\alpha_0=\alpha_0(\theta)$ such that for any $\alpha \in (0,\alpha_0)$, $\eps \in (0,\frac{\alpha}{3})$,  and $K\in \mathcal{C}(\alpha, \Lambda)$ the following holds.
%
For $i,j=1,2$ set 
\[
 M^{\eps}_{i,j}(z_1,z_2) = \int_{\R^n}\int_{\R^n} |K(x,y) - K(z_j,z_j)|\,   |\kappa^{\eps}_i(x,y,z_1,z_2)|\, dx\, dy.
\]
where
\[
 \kappa^{\epsilon}_1(x,y,z_1,z_2) := \frac{\brac{|x-z_1|^{t-\eps-n}-|y-z_1|^{t-\eps-n}}\, \brac{|x-z_2|^{2s-t-n}-|y-z_2|^{2s-t-n}}}{|x-y|^{n+2s}},
\]
\[
 \kappa^{\eps}_2(x,y,z_1,z_2) := \frac{\brac{|x-z_1|^{t-n}-|y-z_1|^{t-n}}\, \brac{|x-z_2|^{2s-t-\eps-n}-|y-z_2|^{2s-t-\eps-n}}}{|x-y|^{n+2s}}
\]
Then for any $f,g \in C_c^\infty(\R^n)$,
\[
 \int_{\R^n} \int_{\R^n} f(z_1)\, g(z_2) M^{\epsilon}_{i,j}(z_1,z_2) dz_1 dz_2 \leq C(\Lambda,\theta) \int_{\R^n}\lapms{\alpha-\eps} |f|(x)\, |g|(x)\, dx, \quad i,j=1,2.
 \]
\end{lemma}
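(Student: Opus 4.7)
The plan is to establish the sharper pointwise kernel estimate
\[
M^{\eps}_{i, j}(z_1, z_2) \aleq c(\Lambda, \theta)\, |z_1 - z_2|^{\alpha - \eps - n},
\]
from which the claimed bilinear inequality follows immediately via the Riesz convolution identity $\int |z_1 - z_2|^{\alpha - \eps - n}\, g(z_2)\, dz_2 = c\, \lapms{\alpha - \eps} g(z_1)$. By the obvious symmetries in $(i, j)$ (swapping the roles of $z_1, z_2$ and of $t - \eps, 2s - t$), it suffices to treat the representative case $i = j = 1$.

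A close inspection of the proof of \Cref{la:Kest} actually yields the sharper \emph{min-form}
\[
|K(x, y) - K(z_1, z_1)|\, \bigl| |x - z_1|^{t - \eps - n} - |y - z_1|^{t - \eps - n}\bigr| \aleq |x - y|^{\beta_1}\, \min\{|x - z_1|, |y - z_1|\}^{B_1},
\]
valid for any $\beta_1 \in (\alpha, \min\{1, t - \eps + \alpha\})$, where $B_1 := t - \eps + \alpha - \beta_1 - n \in (-n, 0)$. A parallel fundamental-theorem-of-calculus argument (in the spirit of \Cref{la:fundthm}, complemented by a trivial bound in the regime $|x - y| \gtrsim \min\{|x - z_2|, |y - z_2|\}$) gives the similar sharper
\[
\bigl| |x - z_2|^{2s - t - n} - |y - z_2|^{2s - t - n}\bigr| \aleq |x - y|^{\beta_2}\, \min\{|x - z_2|, |y - z_2|\}^{B_2},
\]
for any $\beta_2 \in [0, \min\{1, 2s - t\})$, with $B_2 := 2s - t - \beta_2 - n \in (-n, 0)$. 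Combining these two bounds and dividing by $|x - y|^{n + 2s}$, the integrand defining $M^{\eps}_{1, 1}(z_1, z_2)$ is controlled pointwise by
\[
|x - y|^{\beta_1 + \beta_2 - n - 2s}\, \min\{|x - z_1|, |y - z_1|\}^{B_1}\, \min\{|x - z_2|, |y - z_2|\}^{B_2}.
\]

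One next selects $\beta_1, \beta_2$ so that $\beta_1 + \beta_2 > 2s$ strictly, which is feasible thanks to the assumption $\eps < \alpha/3$ and to the uniform ranges \eqref{eq:Kest2:st2smt}, once $\alpha_0 = \alpha_0(\theta)$ is chosen small enough. The resulting $(x, y)$-integral is translation-invariant in $(z_1, z_2)$ and, under the scaling $(x, y, z_1, z_2) \mapsto (\lambda x, \lambda y, \lambda z_1, \lambda z_2)$, homogeneous of degree $B_1 + B_2 + (\beta_1 + \beta_2 - n - 2s) + 2n = \alpha - \eps - n$; hence it reduces to verifying finiteness at $z_1 = 0$, $|z_2| = 1$. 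The integrand has only algebraic singularities at the loci $\{x, y \to z_i\}$ (integrable because $B_i > -n$) and $\{x = y\}$ (integrable because $\beta_1 + \beta_2 - 2s > 0$), while decay at infinity is governed by the negative total homogeneity $\alpha - \eps - n < 0$. This yields $M^{\eps}_{1, 1}(z_1, z_2) \aleq |z_1 - z_2|^{\alpha - \eps - n}$, whence the claim follows by Riesz convolution. The main technical obstacle is precisely that the sharp \emph{min}-form (as opposed to the naive sum-form $\min^{B_i} \leq |x - z_i|^{B_i} + |y - z_i|^{B_i}$) is what makes the integral converge: expanding the sum produces ``diagonal'' cross terms such as $|x - z_1|^{B_1} |x - z_2|^{B_2}$ whose convolutions against $|x - y|^{\beta_1 + \beta_2 - n - 2s}\, dy$ diverge either near the diagonal or at infinity, so the finer case analysis already implicit in the proof of \Cref{la:Kest} is essential.
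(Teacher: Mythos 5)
Your target pointwise bound $M^{\eps}_{i,j}(z_1,z_2)\aleq |z_1-z_2|^{\alpha-\eps-n}$ is itself believable (it is essentially what the paper's computation produces region by region), but your derivation of it has a genuine gap: with a single, globally fixed choice of $(\beta_1,\beta_2)$ satisfying $\beta_1+\beta_2>2s$, the $(x,y)$-integral of your majorant \emph{diverges}. Test it at $z_1=0$, $|z_2|=1$ on the set where $|y-z_1|\le \tfrac12$ and $|x-z_1|\ge 2$: there $m_1:=\min\{|x-z_1|,|y-z_1|\}=|y-z_1|$, $m_2:=\min\{|x-z_2|,|y-z_2|\}\aeq 1$, and $|x-y|\aeq |x|$, so your majorant is comparable to $|x|^{\beta_1+\beta_2-2s-n}\,|y-z_1|^{B_1}$. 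The $y$-integral is a finite constant (since $B_1>-n$), and the remaining integral $\int_{|x|\ge 2}|x|^{\beta_1+\beta_2-2s-n}\,dx$ is infinite precisely because you insisted on $\beta_1+\beta_2>2s$. The heuristic that ``decay at infinity is governed by the total homogeneity $\alpha-\eps-n$'' fails here: for fixed $z_1,z_2$ the integrand is not homogeneous in $(x,y)$ alone, and in this mixed near/far regime the factors $m_1^{B_1},m_2^{B_2}$ provide no decay. (The genuine integrand is fine there -- it behaves like $|y-z_1|^{t-\eps-n}|x|^{-2s-n}$ -- so it is your majorant, not the lemma, that breaks.) The underlying reason is that the H\"older-gain estimate for $|K(x,y)-K(z_j,z_j)|$, which inserts a positive power of $|x-y|$, is the wrong tool when $|x-y|$ is large; there one must instead use the variant of \eqref{eq:Kest} whose increment is comparable to $|x-y|$ (or plain boundedness of $K$), so that the net $|x-y|$-exponent is $\alpha-2s-n<-n$. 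This is exactly why the paper's proof decomposes $\R^n\times\R^n$ into the nine regions determined by which of $|x-y|,|x-z_1|,|y-z_1|$ (respectively $|x-y|,|x-z_2|,|y-z_2|$) is smallest, and chooses the exponents $\sigma,\beta$ \emph{and} the form of \eqref{eq:Kest} separately in each region; no single uniform choice can serve both the diagonal (where one needs the $|x-y|$-power above $-n$) and the far field (where one needs it below $-n$).

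A secondary point: since $B_1,B_2<0$, your ``min-form'' is pointwise comparable to the sum-form of \Cref{la:Kest} (a negative power of the minimum is just the larger of the two terms), so it is not actually sharper; and your closing observation that the cross terms of the sum-form diverge therefore applies verbatim to the min-form majorant -- which is precisely the divergence exhibited above. To repair the argument you would have to reintroduce the region-dependent case analysis, at which point you are essentially reproducing the paper's proof of \Cref{la:Kest2} (possibly repackaged as a pointwise bound on $M^\eps_{i,j}$ followed by the Riesz convolution step, which is a fine way to organize it).
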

\begin{proof}
We prove the lemma by taking 
\begin{equation}\label{eq:Kest2:alpha0}
 \alpha_0 := \frac{1}{10} \min\{\theta, 1-\theta\}.
\end{equation}
To that end, assume that $\alpha < \alpha_0$, $\eps < \frac{\alpha}{3}$ from now on. We will only consider the case of  $M^{\eps}_{12}$; the estimate of the other $M^{\eps}_{ij}$ is analogous. To simplify notation we write $\kappa^\epsilon := \kappa^{\epsilon}_1$ and $M^\eps:=M^{\eps}_{12}$. 

\newcommand{\Oo}{\mathcal{O}}
\newcommand{\Ot}{\mathcal{P}}
We begin writing $M^\epsilon (z_1, z_2)$ as 
\[
\begin{split}
M^\epsilon(z_1,z_2) &\leq \sum_{i, j=1}^{3}\iint_{\Oo_{i}\cap \Ot_{j}} |K(x,y) - K(z_2,z_2)|\,   |\kappa^\epsilon(x,y,z_1,z_2)|\, dx\, dy\\
 &=: \sum_{i,j=1}^{3} J_\epsilon^{(i, j)}(z_1, z_2), 
\end{split}
\]
where the regions of integration are given by
 \[
 \begin{split}
 \Oo_{1}&=\{(x, y): |x-y| \aleq \min\{|x-z_1|, |y-z_1|\}\} \\
\Oo_{2}&=\{(x, y): |x-z_1| \aleq \min\{|y-z_1| , |x-y|\}\}  \\
\Oo_{3}&=\{(x, y): |y-z_1| \aleq \min\{|x-z_1| , |x-y|\}\}\\
\end{split}\]
and 
\[
 \begin{split}
 \Ot_{1}&=\{(x, y): |x-y| \aleq \min\{|x-z_2|, |y-z_2|\}\}\\
  \Ot_{2}&=\{(x, y): |x-z_2| \aleq \min\{|y-z_2| , |x-y|\}\} \\
  \Ot_{3}&= \{(x, y): |y-z_2| \aleq \min\{|x-z_2| , |x-y|\}\} \\
\end{split}
\]
Then we have 
\[
 \int_{\R^n} \int_{\R^n} f(z_1)\, g(z_2) M^{\epsilon}(z_1,z_2) dz_1 dz_2 = \sum_{i,j}  \int_{\R^n} \int_{\R^n} f(z_1)\, g(z_2) J_{\epsilon}^{i,j}(z_1,z_2) dz_1 dz_2. 
\]
We will estimate the integral that involves each of these terms. 
 
\underline{Estimating terms involving $J_\epsilon^{(1, 1)}, J_\epsilon^{(1, 2)}, J_\epsilon^{(1, 3)}$ and $J_\epsilon^{(2, 1)}$}:  

We begin by noting that for $(x, y)\in  \Oo_{1}$, from \Cref{la:fundthm} by taking $r=t-\epsilon-n$ for $\epsilon$ small, for any $0 \leq \sigma \leq 1$ and any $(x, y)$ 
\[
 \abs{|x-z_1|^{t-\eps-n}-|y-z_1|^{t-\eps-n}} \aleq |x-y|^{\sigma}\, \left(|x-z_1|^{t-\eps-\sigma-n} +|y-z_1|^{t-\eps-\sigma-n} \right).
\]
Moreover, from \Cref{la:Kest}   by taking $m=2s-t$, $\alpha<\alpha_0$, for any $\beta < 2s-t+\alpha < 1-{\frac{9\theta}{10}}<1$ and $(x, y)$  
\[
\begin{split}
 |K(x,y) - K(z_2,z_2)|\, &\brac{|x-z_2|^{2s-t-n}-|y-z_2|^{2s-t-n}}\\
  &\aleq |x-y|^\beta\, \left(|x-z_2|^{2s-t+\alpha-\beta-n}+|y-z_2|^{2s-t+\alpha-\beta-n}\right). 
\end{split}
\]
Combining the above two inequalities we obtain that for $0<\epsilon < \frac{\alpha}{3}$, for any $\beta < 2s-t+\alpha$, any $\sigma\in[0,1]$ and any $(x, y)\in  \Oo_{1}$
\begin{equation}\label{grand-inequality}
\begin{split}
|K(x,y) - K(z_2,z_2)|\,  & |\kappa_\epsilon(x,y,z_1,z_2)|\\
 &\aleq |x-y|^{-2s-n+\beta+\sigma}(|x-z_1|^{t-\epsilon-\sigma-n} + |y-z_1|^{t-\epsilon-\sigma-n}) \\
 &\quad\quad \quad\quad\times (|x-z_2|^{2s-t+\alpha-\beta-n} + |y-z_2|^{2s-t+\alpha-\beta-n} ) 
 \end{split}
\end{equation}

Now for $(x, y)\in \Oo_{1}\cap \Ot_{1}$, \eqref{grand-inequality} reduces to 
\[
 |K(x,y) - K(z_2,z_2)|\,   |\kappa(x,y,z_1,z_2)| \aleq |x-y|^{\beta+\sigma-2s-n}\, |x-z_2|^{2s-t+\alpha-\beta-n}\, |x-z_1|^{t-\eps-\sigma-n}
\]
after noting that in this case $|x-z_1| \aeq |y-z_1|$ and $|x-z_2| \aeq |y-z_2|$. 
In view of \eqref{eq:Kest2:st2smt} and \eqref{eq:Kest2:alpha0}, we can choose $\beta$ slightly smaller than $2s-t+\alpha$ and $\sigma$ slightly smaller than $t-\eps$ and still ensure $\beta+\sigma-2s > 2s+\alpha -\eps> 2\theta >0$.
For each $x$, 
\[
 \int_{\{|x-y| \aleq \min\{|x-z_1|, |x-z_2|\}\}}  |x-y|^{\beta+\sigma-2s-n}\, dy \aleq |x-z_1|^{\sigma-t+\frac{\alpha}{2}}\, |x-z_2|^{\beta-2s+t-\frac{\alpha}{2}}
\]
and therefore, \[
 \iint_{\Oo_{1}\cap \Ot_{1}} |K(x,y) - K(z_2,z_2)|\,   |\kappa(x,y,z_1,z_2)| dx\, dy \aleq  \int_{\R^n} |x-z_2|^{\frac{\alpha}{2}-n}\, |x-z_1|^{\frac{\alpha}{2}-\eps-n}\, dx.
\]
From this we conclude that 
\[
\begin{split}
 \iint_{\R^n}  J^{(1,1)}_\eps(z_1,z_2)\, f(z_1)\, g(z_2) dz_1 dz_2 &\aleq \int_{\R^n}\lapms{\frac{\alpha}{2}-\eps} |f|(x)\, \lapms{\frac{\alpha}{2}} |g|(x)\, dx\\
 & =\int_{\R^n}\lapms{\alpha-\eps} |f|(x)\, |g|(x)\, dx, 
\end{split}
 \]
  where the last inequality follows by integration by parts.

For $(x, y)\in \Oo_{1}\cap \Ot_{2}$, \eqref{grand-inequality} reduces to 
\[
 |K(x,y) - K(z_2,z_2)|\,   |\kappa(x,y,z_1,z_2)| \aleq |x-y|^{\beta+\sigma-2s-n}\, |x-z_2|^{2s-t+\alpha-\beta-n}\, |x-z_1|^{t-\eps-\sigma-n}
\]
for our choice of $\beta < 1$ , $\sigma \in (0,t-\eps)$. In view of \eqref{eq:Kest2:st2smt} and \eqref{eq:Kest2:alpha0}, in fact we choose $\beta := 2s-t + \alpha/2 > \theta > \alpha$ to get the estimate that 
\[
 |K(x,y) - K(z_2,z_2)|\,   |\kappa(x,y,z_1,z_2)| \aleq |x-y|^{\sigma-t+\alpha/2-n}\, |x-z_2|^{\frac{\alpha}{2}-n}\, |x-z_1|^{t-\eps-\sigma-n}. 
\]
If $\sigma$ is close enough to $t-\eps$ and since $\eps < \alpha/2$, we can integrate
\[
 \int_{|x-y| \aleq |x-z_1|}  |x-y|^{\sigma-t+\alpha/2-n}\,|x-z_2|^{\frac{\alpha}{2}-n}\, |x-z_1|^{t-\eps-\sigma-n} dy \aleq 
 \,|x-z_2|^{\frac{\alpha}{2}-n}\, |x-z_1|^{\alpha/2-\eps-n}
\]
Arguing in the previous case, we obtain 
\[
\begin{split}
 \iint_{\R^n}  J^{(1,2)}_\eps(z_1,z_2)\, f(z_1)\, g(z_2) dz_1 dz_2 &\aleq \int_{\R^n}\lapms{\frac{\alpha}{2}-\eps} |f|(x)\, \lapms{\frac{\alpha}{2}} |g|(x)\, dx\\
 & =\int_{\R^n}\lapms{\alpha-\eps} |f|(x)\, |g|(x)\, dx, 
\end{split}
 \]
For $(x,y)\in \Oo_1\cap \Ot_3$ or $(x, y)\in \Oo_2\cap \Ot_1$ and \eqref{grand-inequality} reduces to 
\[
\begin{split}
 |K(x,y) - K(z_2,z_2)|\,   |\kappa(x,y,z_1,z_2)| 
 &\aleq |x-y|^{\beta+\sigma-2s-n}\, |y-z_2|^{2s-t+\alpha-\beta-n}\, |y-z_1|^{t-\eps-\sigma-n}
\end{split}
\]
As before we choose $\beta := 2s-t + \alpha/2$ (which is  greater than $\alpha$) to obtain that when $(x,y)\in \Oo_1\cap \Ot_3$
\[
 \int_{|x-y| \aleq |y-z_1|}  |x-y|^{\sigma-t+\alpha/2-n}\,|y-z_2|^{\frac{\alpha}{2}-n}\, |y-z_1|^{t-\eps-\sigma-n} dx \aleq 
 \,|y-z_2|^{\frac{\alpha}{2}-n}\, |y-z_1|^{\alpha/2-\eps-n}, 
\]
from which, we have 
\[
\iint_{\R^{2n} }J_\epsilon^{(1,3)}(z_1, z_2) f(z_1)\, g(z_2) dz_1 dz_2\aleq \int_{\R^{d}} I^{{\frac{\alpha}{2}}- \epsilon}(|f|)(y) I^{{\frac{\alpha}{2}}}(|g|)(y)dy = \int_{\R^{n}}I^{\alpha-\epsilon}|f|(y) |g|(y)dy.
\]
For $(x, y)\in \Oo_2\cap \Ot_1$, we have  
\[
 \int_{|x-y| \aleq |y-z_2|}  |x-y|^{\sigma-t+\alpha/2-n}\,|y-z_2|^{\frac{\alpha}{2}-n}\, |y-z_1|^{t-\eps-\sigma-n} dx \aleq 
 \,|y-z_2|^{\sigma - t+\alpha-n}\, |y-z_1|^{t-\epsilon-\sigma-n}
\]
From this it follows that 
\[
 \iint_{\R^{2n}} J^{(2,1)}_\epsilon(z_1,z_2) f(z_1) g(z_2) dz_1 dz_2 \aleq \int_{\R^n} \lapms{t-\eps-\sigma} |f|(y)\, \lapms{\sigma-t+\alpha} |g|(y)\, dy, 
\]
 and integration by parts leads to the same estimate.

\underline{Estimating $J^{(2, 2)}_{\eps}:$}

On the one hand, for $(x, y)\in \Oo_2\cap \Ot_2$, we have 
(since $|x-z_2|^\alpha \aleq |y-z_2|^\alpha$),
\[
|K(x,y)-K(z_2,z_2)|\aleq |x-z_2|^\alpha + |y-z_2|^\alpha \aleq |y-z_2|^\alpha.
\]
On the other hand, $|y-z_1|^{-1} \aleq |x-z_1|^{-1}$ and $|y-z_2|^{-1} \aleq |x-z_2|^{-1}$, and thus
\[
 \kappa^\eps(x,y,z_1,z_2) \aleq \frac{|x-z_1|^{t-\eps-n}\, |x-z_2|^{2s-t-n}}{|x-y|^{n+2s}}.
\]
This leads to
\[
\begin{split}
 |K(x,y) - K(z_2,z_2)|\,  & |\kappa_\eps(x,y,z_1,z_2)| \\
 \aleq& |x-z_1|^{t-\eps-n}\, |x-z_2|^{2s-t-n}\, |y-z_2|^{\alpha}\, |x-y|^{-2s-n}\\
  \aleq&|x-z_1|^{t-\eps-n}\, |x-z_2|^{2s-t-n}\, |x-y|^{\alpha-2s-n},
\end{split}
 \]
where in the last step we used that $|x-y| \aeq |y-z_2|$.

In view of \eqref{eq:Kest2:st2smt} and \eqref{eq:Kest2:alpha0}, $\alpha-2s < \alpha - 2\theta < 0$, and we observe that
\[
\begin{split}
\int_{\{y: (x, y)\in \Oo_2\cap \Ot_2 \}} |x-y|^{\alpha-2s-n}\, dy \aleq& \int_{\{y: |x-y| \ageq \max\{|x-z_1|, |x-z_2|\}\}} |x-y|^{\alpha-2s-n}\, dy\\
 \aleq& \min\left \{ |x-z_1|^{\alpha -2s},\, |x-z_2|^{\alpha -2s}\right \}\\
 \aleq& |x-z_1|^{\frac{\alpha}{2}-t}\, |x-z_2|^{\frac{\alpha}{2}+t-2s}. 
 \end{split}
\]
As a consequence for each $x$
\[
 \int_{\{y: (x, y)\in \Oo_2\cap \Ot_2 \}} |K(x,y) - K(z_2,z_2)|\,   |\kappa_\eps(x,y,z_1,z_2)| dy \aleq |x-z_1|^{\frac{\alpha}{2}-\eps-n}\, |x-z_2|^{\frac{\alpha}{2}-n}
\]
That is, in this particular case
\[
\begin{split}
 \iint_{\R^{2n}} J_\eps^{(2,2)}(z_1,z_2) f(z_1) g(z_2) dz_1 dz_2  &\aleq \int_{\R^n}\lapms{\frac{\alpha}{2}-\eps} |f|(x)\, \lapms{\frac{\alpha}{2}-\eps} |g|(x)\, dx\\
 & = \int_{\R^n}\lapms{\alpha-\eps} |f|(x)\, |g|(x)\, dx.
 \end{split}
\]
\underline{Estimating $J_\eps^{(2,3)}:$}

Since by \eqref{eq:Kest2:st2smt} and \eqref{eq:Kest2:alpha0} $\frac{\alpha}{3} < \theta - \frac{1}{10} \theta < t-\eps$,  $\frac{\alpha}{3} < \theta < 2s-t$ and  $\eps<\frac{\alpha}{3}$, we have for any $(x, y)\in  \Oo_2\cap \Ot_3$ that 
\[
\begin{split}
 |K(x,y) - K(z_2,z_2)|\,   |\kappa_\eps(x,y,z_1,z_2)| \aleq& |x-z_1|^{t-\eps-n}\, |y-z_2|^{2s-t-n}\, |x-z_2|^{\alpha}\, |x-y|^{-2s-n}\\
  \aleq& |x-z_1|^{\frac{\alpha}{3}-n}\, |y-z_2|^{\frac{\alpha}{3}-n}\, |x-y|^{2s-t-\frac{\alpha}{3} + t-\eps-\frac{\alpha}{3}+\alpha-2s-n}\\
  \aeq& |x-z_1|^{\frac{\alpha}{3}-n}\, |y-z_2|^{\frac{\alpha}{3}-n}\, |x-y|^{\frac{\alpha}{3}-\eps-n}\\
\end{split}
 \]
Thus in this case, we have that 
\[
 \iint_{\R^{2n}} J_\eps^{(2,3)}(z_1,z_2) f(z_1) g(z_2) dz_1 dz_2  \aleq \int_{\R^n} \lapms{\frac{\alpha}{3}} |f|(x) \lapms{\frac{\alpha}{3}-\eps}\brac{\lapms{\frac{\alpha}{3}} |g|}(x) = \int_{\R^n}\lapms{\alpha-\eps} |f|(x)\, |g|(x)\, dx,
\]
where we use the semigroup property of the Riesz potential. 

\underline{Estimating $J_\eps^{(3, 1)}:$}

Here we get for any $\beta < 1$, $\beta < 2s-t+\alpha$ (in view of \eqref{eq:Kest2:st2smt} and \eqref{eq:Kest2:alpha0} $\alpha < \theta < 1-(2s-t)$), for any $(x, y)\in \Oo_3\cap \Ot_1$
\[
\begin{split}
 |K(x,y) - K(z_2,z_2)|\,   |\kappa_\eps(x,y,z_1,z_2)| \aleq& |y-z_1|^{t-\eps-n}\, |x-z_2|^{\alpha +2s-t-\beta-n} |x-y|^{\beta-2s-n}\\
  \aleq& |y-z_1|^{\frac{\alpha}{3}-n}\, |x-z_2|^{\alpha +2s-t-\beta-n} |x-y|^{\beta+t-\eps-\frac{\alpha}{3}-2s-n}
\end{split}
 \]
Taking $\beta:= 2s-t + \frac{2\alpha}{3}$ the above inequality simplifies to 
\[
\begin{split}
 |K(x,y) - K(z_2,z_2)|\,   |\kappa_\eps(x,y,z_1,z_2)| \aleq& |y-z_1|^{\frac{\alpha}{3}-n}\, |x-z_2|^{\frac{\alpha}{3} -n} |x-y|^{\frac{\alpha}{3}-\eps-n}.
\end{split}
 \]
Since $\eps < \frac{\alpha}{3}$, integrating we find that 
\[
 \iint_{\R^{2n}} J_\eps^{(3,1)}(z_1,z_2) f(z_1) g(z_2) dz_1 dz_2  \aleq \int_{\R^n} \lapms{\frac{\alpha}{3}-\eps}\lapms{\frac{\alpha}{3}} |f|(x) \lapms{\frac{\alpha}{3}}|g|(x) = \int_{\R^n}\lapms{\alpha-\eps} |f|(x)\, |g|(x)\, dx.
\]

\underline{Estimating $J^{(3,2)}_\eps:$}

By \eqref{eq:Kest2:st2smt} and \eqref{eq:Kest2:alpha0}, $\frac{\alpha}{3} < \theta - \frac{1}{10} \theta < t-\eps$, and $\frac{\alpha}{3} < \theta < 2s-t$. Thus, for $(x, y)\in  \Oo_3\cap \Ot_2$
\[
\begin{split}
 |K(x,y) - K(z_2,z_2)|\,   |\kappa_\eps(x,y,z_1,z_2)| \aleq& |y-z_1|^{t-\eps-n}\, |x-z_2|^{2s-t-n} |y-z_2|^\alpha |x-y|^{-2s-n}\\
 \aeq& |y-z_1|^{t-\eps-n}\, |x-z_2|^{2s-t-n} |x-y|^{\alpha-2s-n}\\
 \aleq& |y-z_1|^{\frac{\alpha}{3}-n}\, |x-z_2|^{\frac{\alpha}{3}-n} |x-y|^{\alpha-2s-n+t-\eps-\frac{2\alpha}{3}+2s-t}\\
  =& |y-z_1|^{\frac{\alpha}{3}-n}\, |x-z_2|^{\frac{\alpha}{3}-n} |x-y|^{\frac{\alpha}{3}-\eps-n}.\\
\end{split}
 \]
As before, we can now estimate as 
\[
\int_{\R^{2n}} J_\epsilon^{(3, 2)}(z_1,z_2) f(z_1) g(z_2) dz_1 dz_2  \aleq \int_{\R^n} \lapms{\frac{\alpha}{3}-\eps}\lapms{\frac{\alpha}{3}} |f|(x) \lapms{\frac{\alpha}{3}}|g|(x) = \int_{\R^n}\lapms{\alpha-\eps} |f|(x)\, |g|(x)\, dx.
\]
Finally we estimate $J_{\eps}^{(3,3)}:$

For $(x, y)\in \Oo_3\cap \Ot_3$, we have that 
\[
\begin{split}
 |K(x,y) - K(z_2,z_2)|\,   |\kappa(x,y,z_1,z_2)| \aleq& |y-z_1|^{t-\eps-n}\, |y-z_2|^{2s-t-n} |x-z_2|^\alpha |x-y|^{-2s-n}\\
 \aeq& |y-z_1|^{t-\eps-n}\, |y-z_2|^{2s-t-n}\, |x-y|^{\alpha-2s-n}\\
\end{split}
 \]
Observe that from \eqref{eq:Kest2:st2smt} and \eqref{eq:Kest2:alpha0}, we have $\alpha < 2\theta < 2s$, $\frac{\alpha}{2} < \theta < t$ and $\frac{\alpha}{2} < \theta < 2s-t$. Moreover, for any $y$
\[
\begin{split}
 \int_{\{x: (x, y)\in \Oo_3\cap \Ot_3\}} |x-y|^{\alpha-2s-n} dx \aleq& \int_{\{x: |x-y| \ageq \max\{|y-z_1|, |y-z_2|\}\}} |x-y|^{\alpha-2s-n} dx\\
 \aleq& \min\left \{|y-z_1|^{\alpha-2s},|y-z_2|^{\alpha-2s}\right \}\\
 \leq& |y-z_1|^{\frac{\alpha}{2}-t} |y-z_2|^{\frac{\alpha}{2}-2s+t}. 
\end{split}
 \]
Combining the previous two inequalities we have, 
\[
 \int_{\{x: (x, y)\in \Oo_3\cap \Ot_3\}} |K(x,y) - K(z_2,z_2)|\,   |\kappa_\eps(x,y,z_1,z_2)|\, dx \aleq |y-z_1|^{\frac{\alpha}{2}-\eps-n}\, |y-z_2|^{\frac{\alpha}{2}-n}.
\]
This implies in this case
\[
\iint_{\R^{2n}} J_\eps^{(33)}(z_1,z_2) f(z_1) g(z_2) dz_1 dz_2  \aleq \int_{\R^n} \lapms{\frac{\alpha}{2}-\eps} |f|(y)\, \lapms{\frac{\alpha}{2}} |g|(y)\, dy = \int_{\R^n}\lapms{\alpha-\eps} |f|(x)\, |g|(x)\, dx.
\]
This completes the proof of \Cref{la:Kest2}. 
\end{proof}

\begin{lemma}\label{la:integration}
Set for $s \in (0,1)$ and $t \in (0,2s)$ with $2s-t \in (0,1)$,
\[
 \kappa_{0}(x,y,z_1,z_2) := \frac{\brac{|x-z_1|^{t-n}-|y-z_1|^{t-n}}\, \brac{|x-z_2|^{2s-t-n}-|y-z_2|^{2s-t-n}}}{|x-y|^{n+2s}}
\]
then there exists a constant $c = c(s,t)$ such that
\[
 \int_{\R^n} f(z)\, g(z)\, dz = c\int_{\R^n} \int_{\R^n} \int_{\R^n} \int_{\R^n} f(z_1) g(z_2) \kappa_{0}(x,y,z_1,z_2)\, dz_1\, dz_2\, dx\, dy\, 
\]
holds for any $f \in L^p(\R^n)$, $p \in (1,\infty)$ and $g \in C^\infty(\R^n) \cap H^{1,p'}(\R^n)$.
\end{lemma}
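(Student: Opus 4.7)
The plan is to recognize the kernel $\kappa_0$ as a tensor product of Riesz kernels woven into the Sobolev--Slobodeckij representation of the quadratic form associated with $\laps{2s}$. The whole identity will then reduce, after two applications of Fubini and one application of \eqref{eq:fraclap}, to a statement about the semigroup property of Riesz potentials.

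Concretely, first I would work under the stronger assumption $f,g \in \mathcal{S}(\R^n)$, where absolute integrability is not an issue. The integral representation of the Riesz potential (which is valid because $t$ and $2s-t$ both lie in $(0,1) \subset (0,n)$) gives
\[
 \int_{\R^n} f(z_1)\, |x-z_1|^{t-n}\, dz_1 = c_t\, \lapms{t} f(x), \qquad \int_{\R^n} g(z_2)\, |x-z_2|^{2s-t-n}\, dz_2 = c_{2s-t}\, \lapms{2s-t} g(x),
\]
and analogously with $x$ replaced by $y$. Integrating first in $z_1$ and then in $z_2$ in the quadruple integral (Fubini applies by Schwartz decay of $f$ and $g$ and local integrability of the Riesz kernels) produces
\[
 \int_{\R^n}\!\int_{\R^n} \int_{\R^n}\!\int_{\R^n} f(z_1) g(z_2)\kappa_0\, dz_1 dz_2 dx dy = c_{t,s}\int_{\R^n}\!\int_{\R^n} \frac{(U(x)-U(y))(V(x)-V(y))}{|x-y|^{n+2s}}\, dx\, dy
\]
with $U := \lapms{t} f$, $V := \lapms{2s-t} g$.

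Next I would apply \eqref{eq:fraclap} with exponent $2s \in (0,2)$ to the right-hand side to obtain a multiple of $\int \laps{2s} U \cdot V$, and then use the Fourier symbol identity $\laps{2s}\lapms{t} = \laps{2s-t}$ together with the fact that $\laps{2s-t}$ and $\lapms{2s-t}$ are self-adjoint Fourier multipliers (inverse to each other, in particular) to reduce to $\int f\cdot g$. This determines the constant $c(s,t)$ and establishes the identity for Schwartz data.

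The main obstacle is the extension from Schwartz class to the stated regularity $f\in L^p$, $g\in C^\infty\cap H^{1,p'}$: for merely $L^p$ data, neither $\lapms{t} f$ nor the intermediate quadruple integral is automatically absolutely convergent. I would address this by regarding both sides as bilinear forms in $(f,g)$ and showing continuity separately in $f$ (with $g$ fixed smooth) and in $g$ (with $f$ fixed Schwartz), using the $(L^p,L^{p^*})$-boundedness of Riesz potentials from \Cref{pr:sob} together with H\"older's inequality to bound the right-hand quadruple integral after integrating out $z_1,z_2$ first; this reduces to the absolutely convergent quadratic form $\int\int (U(x)-U(y))(V(x)-V(y))\,|x-y|^{-n-2s}\,dx\,dy$, which by Cauchy--Schwarz is controlled by $\|\laps{s} U\|_{L^p}\,\|\laps{s} V\|_{L^{p'}}$, and the latter is finite because $s\le t$ (so $\laps{s}\lapms{t}f \in L^p$) and because $V$ inherits $H^{1,p'}$-regularity from $g$ since $2s-t<1$. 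A standard density argument then propagates the Schwartz-class identity to the stated regularity.
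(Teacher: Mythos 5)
Your core computation is the same as the paper's, just run in the opposite direction: the paper starts from $\int fg$, uses the Fourier identities to write it as $c\int (\laps{2s}\lapms{t}f)\,(\lapms{2s-t}g)$, applies \eqref{eq:fraclap} to get the bilinear Gagliardo-type form in $U=\lapms{t}f$, $V=\lapms{2s-t}g$, and then inserts the integral representation of the Riesz kernels to reach the quadruple integral; you unfold the quadruple integral by Fubini and retrace exactly these steps backwards. So the identity for Schwartz (resp.\ $C_c^\infty$) data and the determination of $c(s,t)$ are established the same way, and the final passage to $f\in L^p$, $g\in C^\infty\cap H^{1,p'}$ is in both cases a density/continuity argument (the paper dispatches it with a one-line ``by density'' remark).

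The one place where your write-up overreaches is the justification of that continuity step. You claim the form $\int\int (U(x)-U(y))(V(x)-V(y))|x-y|^{-n-2s}\,dx\,dy$ is controlled by $\|\laps{s}U\|_{L^p}\|\laps{s}V\|_{L^{p'}}$ and that this is finite ``because $s\le t$, so $\laps{s}\lapms{t}f\in L^p$.'' First, $s\le t$ is not among the hypotheses of the lemma: it only assumes $t\in(0,2s)$ with $2s-t\in(0,1)$, which allows $t<s$. Second, even when $t\ge s$ one has $\laps{s}\lapms{t}f=\lapms{t-s}f$, which for general $f\in L^p$ lies in $L^{\frac{np}{n-(t-s)p}}$, not in $L^p$, so the asserted finiteness fails as stated. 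The natural repair is to split the weight in H\"older with orders $t$ and $2s-t$ rather than $s$ and $s$, so that the $f$-factor is the Gagliardo seminorm $[\lapms{t}f]_{W^{t,p}}$ (comparable to $\|f\|_{L^p}$ when the Bessel-to-Besov embedding goes the right way) and the $g$-factor is handled using $g\in H^{1,p'}$ together with $2s-t<1$; alternatively one can fix $g$ smooth and estimate the kernel obtained after integrating out $z_2,x,y$ directly. Since the paper itself is no more detailed at this point, this is a repairable imprecision rather than a failure of the approach, but as written the finiteness claim is incorrect.
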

\begin{proof}
Assume first that $f,g \in C_c^\infty(\R^n)$. Using the definitions of fractional Laplacian and Riesz potential via Fourier transform we have
\[
 \int_{\R^n} f(x)\, g(x)\, dx  = c\, \int_{\R^n} (\laps{2s} \lapms{t} f)(z)\, (\lapms{2s-t} g)(z)
\]
In view of \eqref{eq:fraclap} we thus find
\[
 \int_{\R^n} f(x)\, g(x)\, dx  = c\int_{\R^n}\int_{\R^n} \frac{\brac{\lapms{t} f(x)-\lapms{t} f(y)}\, \brac{\lapms{2s-t} g(x)-\lapms{2s-t} g(y)}}{|x-y|^{n+2s}}\, dx\, dy,
\]
which holds for any $f,g \in C_c^\infty(\R^n)$. By density, it also holds for $f \in L^p(\R^N)$ if e.g. $g \in H^{1,p'}(\R^n)$.

Recall that the Riesz potential is given by an explicit integral formula, and thus for almost every $x$ and $y$ in $\R^n$
\[
 \lapms{t} f(x)-\lapms{t} f(y) = C\int_{\R^n} \brac{|z_1-x|^{t-n}-|z_1-y|^{t-n}}\, f(z_1)\, dz_1,
\]
and
\[
 \lapms{2s-t} g(x)-\lapms{2s-t} g(y) = C\int_{\R^n} \brac{|z_2-x|^{2s-t-n}-|z_2-y|^{2s-t-n}}\, g(z_2)\, dz_2
\]
Again these formulas hold at first for $f,g \in C_c^\infty(\R^n)$ but by density they still hold for almost every $x$ and $y$ for our $f$ and $g$. This proves the above formula.
\end{proof}
\subsection{Proof of the commutator estimate}
We are now ready to present the proof of the commutator estimate given in \Cref{th:reformulationcommie}. 

\begin{proof}[Proof of \Cref{th:reformulationcommie}]
Assume first that $u, \varphi \in C_c^\infty(\R^n)$. Fix, $s\in (0, 1)$, and $t\in(0,1)$ such that $0<2s-t <1$. 
Using the inverse relationship between the fractional Laplacian and the Riesz potential, for every $x \in \R^n$,
 we have that 
\[
 u(x) = C\int_{\R^n}\, |x-z_1|^{t-n}\, \laps{t} u(z_1)\, dz_1, 
\]
and
\[
 \varphi(x) = C\int_{\R^n}\, |x-z_2|^{2s-t-n}\, \laps{2s-t} \varphi(z_2)\, dz_2.
\]
Plugging in these equations in $\langle \mathcal{L}^{s}_{\R^{n}} u, \varphi\rangle$ and interchanging the integrals we obtain that 
\[
\begin{split}
   \langle \mathcal{L}^{s}_{\R^{n}} u, \varphi\rangle= &\int_{\R^n}\int_{\R^n} K(x,y)\frac{ (u(x)-u(y))\, (\varphi(x)-\varphi(y))}{|x-y|^{n+2s}}\, dx\, dy \\
=&C^2 \int_{\R^n}\int_{\R^n}\int_{\R^n}\int_{\R^n} K(x,y)\, \laps{t} u(z_1)\, \laps{2s-t} \varphi(z_2) \kappa_0(x,y,z_1,z_2)\, dz_1 dz_2 dx\, dy \\
  \end{split}
\]
where 
\[
\kappa_0(x,y,z_1,z_2) := \frac{\brac{|x-z_1|^{t-n}-|y-z_1|^{t-n}}\, \brac{|x-z_2|^{2s-t-n}-|y-z_2|^{2s-t-n}}}{|x-y|^{n+2s}}
\]
is as defined in \Cref{la:integration}.  Notice that the constant $C^{2}$ depends only on $s, t, $ and $n$. 
Since $u,\varphi \in C_c^\infty(\R^n)$, $f(z_1) := K(z_1,z_1)\laps{t} u(z_1)$ and $g(z_2) := \laps{2s-t} \varphi(z_2)$ belong to $L^p(\R^n)$ for any $p \in [1,\infty]$, moreover $g$ belongs to $H^{1,p}(\R^n)$ for any $p \in (1,\infty)$. Consequently, by \Cref{la:integration}, 
\[
\begin{split}
\langle L^{s,t}_{diag}u, \varphi\rangle=&\int_{\R^n}K(z,z)\, \laps{t} u(z)\, \laps{2s-t} \varphi(z) dz \\
=&\int_{\R^n}\int_{\R^n}\int_{\R^n}\int_{\R^n} K(z_1,z_1)\, \laps{t} u(z_1)\, \laps{2s-t} \varphi(z_2) \kappa_0(x,y,z_1,z_2)\, dz_1 dz_2 dx\, dy .
\end{split}
\]
Thus for the choice of the constant $\Gamma = C^{2}$, we have 
\begin{equation}\label{eq:Dst1}
\begin{split}
 D_{s,t}(u,\varphi) &=    \langle \mathcal{L}^{s}_{\R^{n}} u, \varphi\rangle - \Gamma \langle L^{s,t}_{diag}u, \varphi\rangle\\
 & = \int_{\R^n}\int_{\R^n}\int_{\R^n}\int_{\R^n} \Phi(x, y, z_1,z_2)dz_1 dz_2\, dx\, dy
\end{split}
\end{equation}
where 
\[
 \Phi(x, y, z_1,z_2):= \brac{K(x,y)-K(z_1,z_1)}\, \laps{t} u(z_1)\, \laps{2s-t} \varphi(z_2) \kappa_0(x,y,z_1,z_2).
\]
By the definition of the Riesz potential $\lapms{\sigma}$ and the fact that $\lapms{\sigma} = \brac{(-\lap)^{\frac{\sigma}{2}}}^{-1}$ for any $\sigma \in (0,n)$, we have for any $x,y \in \R^n$ and any $\eps < 2s-t$,
\[
\begin{split}
 &\int_{\R^n} \laps{2s-t} \varphi(z_2) \brac{|x-z_2|^{2s-t-n}-|y-z_2|^{2s-t-n}}\, dz_2\\
 =&c_1\brac{\lapms{2s-t}\laps{2s-t}\varphi(x)-\lapms{2s-t}\laps{2s-t}\varphi(y)}\\
 =&c_1\brac{\varphi(x)-\varphi(y)}\\
 =&c_1\brac{\lapms{2s-t-\eps}\laps{2s-t-\eps}\varphi(x)-\lapms{2s-t-\eps}\laps{2s-t}\varphi(y)}\\
 =&c_2\int_{\R^n} \laps{2s-t-\eps} \varphi(z_2) \brac{|x-z_2|^{2s-t+\eps-n}-|y-z_2|^{2s-t+\eps-n}}\, dz_2
\end{split}
 \]
 where $c_2$ will depend on $\epsilon.$
By Fubini's theorem we can thus rewrite the representation \eqref{eq:Dst1} for $D_{s,t}(u,\varphi)$ into
\[
 D_{s,t}(u,\varphi) = c 
 \int_{\R^n}\int_{\R^n}\int_{\R^n}\int_{\R^n}  \Phi_\eps(x, y, z_1,z_2)\, dx\, dy dz_1 dz_2,
\]
where $ \Phi_\eps(x, y, z_1,z_2)= \brac{K(x,y)-K(z_1,z_1)}\, \laps{t} u(z_1)\, \laps{2s-t-\eps} \varphi(z_2) \kappa_\epsilon(x,y,z_1,z_2)$ and 
\[
\kappa_\eps(x,y,z_1,z_2) := \frac{\brac{|x-z_1|^{t-n}-|y-z_1|^{t-n}}\, \brac{|x-z_2|^{2s-t-\eps-n}-|y-z_2|^{2s-t-\eps-n}}}{|x-y|^{n+2s}}.
\]
We can now estimate the latter to obtain that 
\[
 |D_{s,t}(u,\varphi)|  \aleq \int_{\R^n}\int_{\R^n}|\laps{t} u(z_1)|\, |\laps{2s-t-\eps} \varphi(z_2)|\, M^{\eps}(z_1,z_2) dz_1 dz_2.
\]
where
\[
 M^{\eps}(z_1,z_2) = \int_{\R^n}\int_{\R^n} |K(x,y) - K(z_1,z_1)|\,   |\kappa_\eps(x,y,z_1,z_2)|\, dx\, dy.
\]
Now in view of \Cref{la:Kest2} (when this $M^\eps$ correspond to $M^{\eps}_{1,2}$ of the lemma) we have for small enough $\alpha $, $\eps < \alpha/3$, and $K\in \mathcal{C}(\alpha, \Lambda)$
\[
 |D_{s,t}(u,\varphi)| \aleq \int_{\R^n} \lapms{\alpha-\eps} |\laps{t} u|(x)\, |\laps{2s-t-\eps} \varphi|(x)\, dx.
\]
The other estimate follows the same way by reversing the role of $u$ and $\varphi$ from the beginning and we conclude under the assumption that $u \in C_c^\infty(\R^n)$.

In the case that $u \in H^{t,p}(\R^n)$, but still $\varphi \in C_c^\infty(\R^n)$, take let $u_k \in C_c^\infty(\R^n)$ 
\[
 \|u_k-u\|_{H^{t,p}(\R^n)} \xrightarrow{k \to \infty} 0.
\]
Observe that since $\varphi \in C_c^\infty(\R^n)$ and $2s-1 < t$ we have 
\[
\begin{split}
 \lim_{k \to \infty} \langle \mathcal{L}^{s}_{\R^{n}} u_k, \varphi \rangle&= \int_{\R^n}\int_{\R^n} K(x,y)\frac{ (u_k(x)-u_k(y))\, (\varphi(x)-\varphi(y))}{|x-y|^{n+2s}}\, dx\, dy\\
=&\int_{\R^n}\int_{\R^n}K(x,y) \frac{ (u(x)-u(y))\, (\varphi(x)-\varphi(y))}{|x-y|^{n+2s}}\, dx\, dy
=\langle \mathcal{L}^{s}_{\R^{n}} u, \varphi \rangle. 
 \end{split}
\]
Similarly, 
\[
\begin{split}
 \lim_{k \to \infty} \langle {L}^{s,t}_{diag} u_k, \varphi \rangle &=\int_{\R^n}K(z,z)\, \laps{t} u_k(z)\, \laps{2s-t} \varphi(z) dz\\
 & = \int_{\R^n}K(z,z)\, \laps{t} u(z)\, \laps{2s-t} \varphi(z) dz=\langle {L}^{s,t}_{diag} u, \varphi \rangle. 
\end{split}
\]
Combining the above we see that 
\[
 \lim_{k \to \infty} D_{s,t}(u_k,\varphi) = D_{s,t}(u,\varphi). 
\]
Moreover, we have already shown that 
\[
\begin{split}
 |D_{s,t}(u_k,\varphi)| \aleq& \int_{\R^n} \lapms{\sigma-\eps} |\laps{t} u_k|(x)\, |\laps{2s-t-\eps} \varphi|(x)\, dx\\
\aeq & \int_{\R^n} |\laps{t} u_k|(x)\, \lapms{\sigma-\eps}|\laps{2s-t-\eps} \varphi|(x)\, dx\\
\end{split}
 \]
Again, from the $H^{t,p}$-convergence of $u_k$ (and using once again that $\varphi \in C_c^\infty(\R^n)$ is fixed so that \[\|\lapms{\sigma-\eps}|\laps{2s-t-\eps} \varphi|\|_{L^{p'}(\R^n)} < \infty,\] we find
\[
\limsup_{k \to \infty} |D_{s,t}(u_k,\varphi)| \aleq \int_{\R^n} |\laps{t} u|(x)\, \lapms{\sigma-\eps}|\laps{2s-t-\eps} \varphi|(x)\, dx\\
 \]
This concludes the proof of \Cref{th:reformulationcommie}.
\end{proof}

\section{Calderon-Zygmund theory for weighted fractional Laplace: Proof of Theorem~\ref{th:regularityKlapls:1}}\label{s:fraclap}
First, we prove the following intermediate result. 
\begin{proposition}\label{intermediate-regular-Klaps}
Let $s>0$ and $t \in (0,2s)$. Assume that for some $q \in (1,\infty)$, $\laps{t} u \in L^q(\R^n)$ is a distributional solution to
\[
 \int_{\R^n} \bar{K}(z) \laps{t} u\, \laps{2s-t} \varphi = \int_{\R^n} f_1\, \laps{2s-t} \varphi + \int_{\R^n} f_2\, \varphi\quad \forall \varphi \in C_c^\infty(\Omega).
\]
Here $\bar{K}: \R^n \to \R$ is a positive, measurable, and bounded from above and below, i.e.
\[
\Lambda^{-1} \leq \bar{K}(z) \leq  \Lambda \quad \text{a.e. }x \in \R^n.
\]
Then for any $\Omega_1 \subsubset \Omega_2 \subsubset \Omega\subset \R^n$, $p>q$, and $r\in (1, p)$ such that 
\[
\text{$r>\frac{n p}{n+(2s-t)p}$\,\, if $2s - t \leq 1$, and $r> \frac{n p}{n+p} $ if $2s-t \geq 1$}
\]

 if $f_1,f_2 \in  L^q(\R^n) \cap L^p(\Omega_2)$  then $\laps{t} u \in L^p(\Omega_1)$ with the estimate 
\begin{equation}\label{eq:asdk:goalLp}
 \|\laps{t} u\|_{L^p(\Omega_1)} \aleq \sum_{j=1}^{2}\left(\|f_j\|_{L^p(\Omega_2)} + \|f_j\|_{L^q(\R^n)}\right) + \|\laps{t} u\|_{L^r(\Omega_2)} +\|\laps{t} u\|_{L^q(\R^n)}.
\end{equation}
\end{proposition}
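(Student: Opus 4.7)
Since $\bar K$ is bounded above and below by positive constants, $\|\laps{t} u\|_{L^p(\Omega_1)}$ is comparable to $\|\bar K\laps{t} u\|_{L^p(\Omega_1)}$, which by duality equals
\[
\sup\Bigl\{ \Bigl|\int_{\Omega_1} \bar K\, \laps{t} u\, \psi\Bigr| : \psi \in C_c^\infty(\Omega_1),\ \|\psi\|_{L^{p'}(\R^n)} \leq 1\Bigr\}.
\]
Since $\laps{t} u \in L^q(\R^n)$ and $\Omega_1$ is bounded, this pairing is finite a priori, so a uniform bound over admissible $\psi$ will yield \eqref{eq:asdk:goalLp}. The plan is to test the equation against $\varphi := \eta_1 \lapms{2s-t} \psi$, chosen so that $\laps{2s-t}\varphi$ reproduces $\psi$ up to a controlled remainder.

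Fix cutoffs $\eta_1 \in C_c^\infty(\Omega_2)$ equal to $1$ on a neighborhood of $\overline{\Omega_1}$, and $\eta_2 \in C_c^\infty(\R^n)$ with $\supp \eta_2 \subset \{\eta_1 \equiv 1\} \subset \Omega_2$ and $\eta_2 \equiv 1$ near $\overline{\Omega_1}$, so that both support hypotheses of \Cref{prop:disjoint-support} hold. Then $\varphi \in C_c^\infty(\Omega)$ is admissible, and since $\laps{2s-t}\lapms{2s-t}\psi = \psi$,
\[
\laps{2s-t} \varphi = \psi - G, \qquad G := \laps{2s-t}\bigl((1-\eta_1)\lapms{2s-t}\psi\bigr).
\]
The hypothesis on $r$ matches exactly what \Cref{prop:disjoint-support} (with $\tau=2s-t$) requires; part (b) of that proposition supplies $\|G\|_{L^{r'}(\Omega_2)} \aleq \|\psi\|_{L^{p'}}$, while the pointwise estimate $|(1-\eta_2)G(x)| \aleq (1+|x|)^{-n-(2s-t)} \|\psi\|_{L^{p'}}$ extracted from the proof of part (a) gives $\|G\|_{L^{q'}(\R^n \setminus \Omega_2)} \aleq \|\psi\|_{L^{p'}}$ (since $2s-t>0$ the far-field tail is integrable to any power).

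Substituting $\varphi$ into the equation and using that $\psi$ is supported in $\Omega_1$ yields the identity
\[
\int_{\Omega_1} \bar K \laps{t} u\, \psi = \int_{\Omega_1} f_1\, \psi + \int_{\R^n} f_2\, \varphi + \int_{\R^n} \bigl(\bar K \laps{t} u - f_1\bigr)\, G.
\]
The first term is bounded by $\|f_1\|_{L^p(\Omega_2)}\|\psi\|_{L^{p'}}$; the second, using $\|\varphi\|_{L^{p'}(\Omega_2)} \aleq \|\lapms{2s-t}\psi\|_{L^{p'}(\Omega_2)} \aleq \|\psi\|_{L^{p'}}$ from \Cref{pr:sob} (covering both $(2s-t)p' < n$ and $(2s-t)p' \geq n$, using that $\psi$ has compact support), by $\|f_2\|_{L^p(\Omega_2)}\|\psi\|_{L^{p'}}$. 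For the remaining commutator term we split the integral into $\Omega_2$ and $\R^n \setminus \Omega_2$: on $\Omega_2$ the $L^{r'}$-bound on $G$ pairs with $\|\laps{t}u\|_{L^r(\Omega_2)} + \|f_1\|_{L^r(\Omega_2)}$, and $\|f_1\|_{L^r(\Omega_2)} \aleq \|f_1\|_{L^p(\Omega_2)}$ by H\"older on a bounded set since $r<p$; on the complement the far-field $L^{q'}$-bound on $G$ pairs with $\|\laps{t} u\|_{L^q(\R^n)} + \|f_1\|_{L^q(\R^n)}$. Summing these contributions and taking the supremum over $\psi$ produces the desired estimate \eqref{eq:asdk:goalLp}.

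The main technical point is controlling $G$ simultaneously in the local norm $L^{r'}(\Omega_2)$ (to interact with the mixed-integrability assumption on $\laps{t}u$) and in $L^{q'}$ on the far field (since globally we only know $\laps{t} u \in L^q$). This forces the compatible arrangement of $\eta_1,\eta_2$ dictated by \Cref{prop:disjoint-support}, and the dichotomy in the lower bound on $r$ simply reflects the same dichotomy at the threshold $2s-t=1$ that appears there.
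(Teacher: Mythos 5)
Your proposal is correct and is essentially the paper's own argument: the same duality reduction, the same test function $\eta_1\lapms{2s-t}\psi$, the identity $\laps{2s-t}\varphi=\psi-G$, \Cref{pr:sob} for the $f_2\varphi$ term, and \Cref{prop:disjoint-support} (local $L^{r'}$ bound on $G$ paired with $\laps{t}u\in L^r(\Omega_2)$, far-field decay paired with the global $L^q$ data). The only differences are bookkeeping — you group the error into a single term $\int(\bar K\laps{t}u-f_1)G$ split over $\Omega_2$ and its complement, where the paper splits $\psi$ with $\eta_2$ into the pieces $I_1,I_2,I_3,II,III$ — which changes nothing of substance.
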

We delay the proof of the proposition. Rather we present two results that use the proposition. The first is \Cref{th:regularityKlapls:1} whose proof is given next. 
\begin{proof}[Proof of \Cref{th:regularityKlapls:1}]
If $p\leq q,$ there is nothing to prove.  So, we assume $p>q.$
We will use \Cref{intermediate-regular-Klaps} to iterate the estimate on successive subdomains.  Assume first that $2s-t < 1$. Let $\Omega_1 = \Omega',$ and $p_1=p$. We introduce successive subdomains 
\[
\Omega'= \Omega_1\subsubset \Omega_2 \subsubset \cdots \Omega_L \subsubset \Omega
\]
and successive positive numbers 
\[
\text{$p_1 = p$, $p_{i+1} \in [q,p_{i})$ with $p_{i+1} > \frac{n p_{i}}{n+(2s-t)p_i}$}
\]
in such a way that for some $L$, $p_L = q$. It is not difficult to see that such a finite $L$  exists depending on $p,q,n,s$  and $t$.
By \Cref{intermediate-regular-Klaps}, in each step we have
\[
 \|\laps{t} u\|_{L^{p_i}(\Omega_i)} \aleq \sum_{j=1}^{2}\left(\|f_j\|_{L^p(\Omega)} + \|f_i\|_{L^q(\R^n)}\right) + \|\laps{t} u\|_{L^q(\R^n)}
 + \|\laps{t} u\|_{L^{p_{i+1}}(\Omega_{i+1})}.
\]
Iterating the above inequality $L$ number of times we get that  
\[
 \|\laps{t} u\|_{L^{p}(\Omega')} \aleq \sum_{j=1}^{2}\left(\|f_j\|_{L^p(\Omega)} + \|f_j\|_{L^q(\R^n)}\right) + \|\laps{t} u\|_{L^q(\R^n)}
 + \|\laps{t} u\|_{L^{q}(\Omega)}, 
\]
from which the desired inequality follows. If $2s-t\geq 1$, then an obvious modification of the above iteration lead to the inequality. 
\end{proof}
We can now prove \Cref{intermediate-regular-Klaps}. 
\begin{proof}[Proof of \Cref{intermediate-regular-Klaps}] 
%
To prove \eqref{intermediate-regular-Klaps} we use a duality argument and show that 
\[
\sup_{\substack{0\neq \psi \in C_c^\infty(\Omega_1)\\\|\psi\|_{L^{p'}} \leq 1}} \int_{\R^{n}}  \laps{t} u\psi \,dx \leq \sum_{j=1}^{2}\left(\|f_j\|_{L^p(\Omega)} + \|f_j\|_{L^q(\R^n)}\right) + \|\laps{t} u\|_{L^q(\R^n)}
 + \|\laps{t} u\|_{L^{r}(\Omega_2)}. 
\]
Using the ellipticity of $\bar{K}$, it suffices to show that for any $ \psi \in C_c^\infty(\Omega_1),$
\begin{equation}\label{eq:asdk:goal}
\begin{split}
 &\int_{\R^n} \bar{K}(z)\, \laps{t} u\, \psi\,dz\\
 \aleq&\left(\sum_{j=1}^{2}\left(\|f_j\|_{L^p(\Omega)} + \|f_i\|_{L^q(\R^n)}\right) + \|\laps{t} u\|_{L^q(\R^n)}
 + \|\laps{t} u\|_{L^{r}(\Omega_2)}\right)\, \|\psi\|_{L^{p'}(\R^n)}.
 \end{split}
\end{equation}
%
To simplify notation, we will write $\Omega_2 = \Omega$. 

Let $\eta_1,\eta_2 \in C_c^\infty(\Omega)$, $\eta \equiv 1$ in a neighbourhood of $\Omega_1$ and $\eta_2 \equiv 1$ in a neighbourhood of $\supp \eta_1$. Set
\[
 \varphi := \brac{\eta_1 \lapms{2s-t} \psi},
\]
which  is now a good test function for the equation. 
Then using the inverse relationship between $\laps{2s-t}$ and $\lapms{2s-t}$, we have the  identity $$\psi = \laps{2s-t} \varphi + \eta_2 \laps{2s-t} (1-\eta_1 )\lapms{2s-t} \psi  + (1-\eta_2) \laps{2s-t} (1-\eta_1)\lapms{2s-t} \psi,$$
from which it follows that 
\[
\begin{split}
 \int_{\R^n} \bar{K}(z)\, \laps{t} u\, \psi \,dz
 = I + II + III
\end{split}
\]
where
\[
\begin{split}
 I:=&\int_{\R^n}\bar{K}(z)\, \laps{t} u\, \laps{2s-t} \varphi\,dz, \quad\\
 II:= &\int_{\R^n} \bar{K}(z)\, \eta_2 \laps{t} u\, \laps{2s-t} \brac{(1-\eta) \lapms{2s-t} \psi}\,dz, \,\,\,\text{and} \\
 III:=& \int_{\R^n} \bar{K}(z)\, (1-\eta_2) \laps{t} u\, \laps{2s-t} \brac{(1-\eta_1) \lapms{2s-t} \psi}\,dz.
 \end{split}
\]
Now using the equation, since $\varphi$ is a valid test function, we have that 
\[
\begin{split}
 I = \int_{\R^n} \bar{K}(z)\,\laps{t} u\, \laps{2s-t} \varphi\,dz&= \int_{\R^n} f_1 \laps{2s-t} \varphi\,dz+\int_{\R^n} f_2 \varphi\,dz. 
 \end{split}
\]
The right  hand side can now be rewritten using the identity between $\varphi$ and $\psi$  as 
\[
 I = I_{1} +  I_2+I_3,
\]
where
\[
\begin{split}
 I_{1}:=&\int_{\R^n} f_1 \psi+f_2 \varphi\,dz\\
 I_2:=& \int_{\R^n} \eta_2 f \laps{2s-t} \brac{(1-\eta_1) \lapms{2s-t} \psi}\,dz\\
 I_3:=& \int_{\R^n} (1-\eta_2) f \laps{2s-t} \brac{(1-\eta_1) \lapms{2s-t} \psi} \,dz.
 \end{split}
\]
Clearly,
\[
 \int_{\R^n} f_1 \psi \,dz\aleq \|f_1\|_{L^p(\Omega)}\, \|\psi\|_{L^{p'}(\Omega)}.
\]
 Sobolev embedding, \Cref{pr:sob} together with the fact that $\psi$ is compactly supported we have,
\[
 \int_{\R^n} f_2 \varphi\,dz \aleq \|f_2\|_{L^p(\Omega)}\, \|\lapms{2s-t} \psi\|_{L^{p'}(\Omega)} \aleq \|f_2\|_{L^p(\Omega)}\, \|\psi\|_{L^{p'}(\Omega)}.
\]
That is,
\[
 |I_1| \aleq \brac{\|f_1\|_{L^p(\Omega)}+\|f_2\|_{L^p(\Omega)}}\, \|\psi\|_{L^{p'}(\Omega)}.
\]
Notice that by our choice of $r$,
\[
 r > \frac{n p}{n+(2s-t)p} \Leftrightarrow r' < \frac{n p'}{n-(2s-t)p'},
\]
and therefore,  \Cref{prop:disjoint-support} is applicable. 

To estimate $I_2$, we apply \Cref{prop:disjoint-support}, for $\tau=2s-t$, and $r=p'$ to obtain that 
\[
 |I_2| \aleq \|f\|_{L^p(\Omega)}\, \|\psi\|_{L^{p'}(\Omega)}.
\]
Moreover, again apply \Cref{prop:disjoint-support} for any $r > \frac{n p}{n-(2s-t)p}$, we can estimate $|||$ as
\[
 |II| \aleq \|\laps{t} u\|_{L^{r}(\Omega)} \|\psi\|_{L^{p'}(\R^n)}.
\]

For the remaining cases $III$ and $I_3$, we apply again \Cref{prop:disjoint-support} to estimate as 
\[
 |I_3| \aleq  \|f\|_{L^q(\R^n)} \|\psi\|_{L^{p'}(\R^n)},
\]
and
\[
|III| \aleq\|\laps{t} u\|_{L^q(\R^n)}\, \|\psi\|_{L^{p'}(\R^n)}.
\]
This was the last estimate needed for \eqref{eq:asdk:goal}, and we can conclude the proof.
\end{proof}
We finish the section by proving regularity result for weighted fractional elliptic equation when the coefficient $\bar{K}$ is H\"older continuous. In this case, we can ``differentiate the equation'', which leads to estimates of the following form.
\begin{proposition}\label{pr:regularityKlapls:2}
Let $s>0$ and $t \in [s,2s)$. Assume that for some $q \in (1,\infty)$ $\laps{t} u \in L^q(\R^n)$ is a distributional solution to
\[
 \int_{\R^n} \bar{K}(z) \laps{t} u\, \laps{2s-t} \varphi = \int_{\R^n} f_1\, \laps{2s-t} \varphi +  \int f_2\, \varphi\quad \forall \varphi \in C_c^\infty(\Omega).
\]
Assume that $K$ is positive, measurable, and bounded from above and below, i.e.
\[
\Lambda^{-1} \leq \bar{K}(x) \leq  \Lambda \quad \text{a.e. }x \in \R^n.
\]
and $\bar{K}$ is moreover uniformly H\"older continuous, i.e. for some $\gamma \in (0,1]$,
\[
 \sup_{x,y,\R^n} \frac{|\bar{K}(x)-\bar{K}(y)|}{|x-y|^\gamma} \leq \Lambda.
\]
Then for any $\beta < \min\{\gamma,2s-t\}$, and any $\Omega' \subsubset \Omega \subsubset \R^n$
\[
 \|\laps{t+\beta} u\|_{L^q(\Omega')} \leq C(\Omega,\Lambda,s,t,p,q)\, \brac{\|\laps{t} u\|_{L^q(\R^n)} +\|f_2\|_{L^{q}(\Omega)} + 
 \|\laps{\beta} f_1\|_{L^{q}(\R^n)}}.
\]
\end{proposition}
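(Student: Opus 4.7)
The plan is to \emph{differentiate the equation by $\laps{\beta}$}. The structural key is the commutator identity $\bar K\,\laps{\beta}(\laps{t}u) = \laps{\beta}(\bar K\laps{t}u) - [\laps{\beta},\bar K]\laps{t}u$ together with the classical Coifman--McIntosh--Meyer / Kato--Ponce type estimate
\[
\|[\laps{\beta},\bar K]\,v\|_{L^q(\R^n)} \aleq \|\bar K\|_{\dot C^\beta(\R^n)}\,\|v\|_{L^q(\R^n)} \qquad (v\in L^q(\R^n)),
\]
which is available because $\bar K\in C^\gamma$ and $\beta<\gamma\leq 1$ (see e.g.\ \cite[Theorem~6.1]{LS18}). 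Thus applying $\laps{\beta}$ to the equation produces only an error $[\laps{\beta},\bar K]\laps{t}u$ that lies in $L^q(\R^n)$ with norm controlled by $\|\bar K\|_{\dot C^\beta}\,\|\laps{t}u\|_{L^q(\R^n)}$, and which we will absorb into the right-hand side.

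Concretely, for any $\tilde\varphi\in C_c^\infty(\Omega)$, I would combine this commutator identity with the self-adjointness of fractional Laplacians and the hypothesized equation to derive
\[
\int \bar K\,\laps{t+\beta}u\cdot\laps{2s-t-\beta}\tilde\varphi
= \int \tilde f_1\,\laps{2s-t-\beta}\tilde\varphi + \int f_2\,\tilde\varphi,
\]
with
\[
\tilde f_1 := \laps{\beta} f_1 - [\laps{\beta},\bar K]\,\laps{t}u.
\]
The appearance of $\laps{\beta}f_1$ comes from moving $\laps{\beta}$ from $\tilde\varphi$ onto $f_1$ via the self-adjoint identity $\int f_1\,\laps{2s-t}\tilde\varphi = \int\laps{\beta} f_1\cdot \laps{2s-t-\beta}\tilde\varphi$. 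Extending $f_2$ by zero, the above says that $w:=\laps{t+\beta}u$ is a distributional solution of exactly the type of equation in \Cref{th:regularityKlapls:1}, with $t$ replaced by $t+\beta$ and $2s-t$ by $2s-t-\beta$ (both positive thanks to $\beta<2s-t$), and with right-hand sides $\tilde f_1, f_2 \in L^q(\R^n)$ whose norms are controlled by $\|\laps{\beta}f_1\|_{L^q(\R^n)} + \|\bar K\|_{\dot C^\beta}\|\laps{t}u\|_{L^q(\R^n)} + \|f_2\|_{L^q(\Omega)}$. Invoking \Cref{th:regularityKlapls:1} with $p=q$ then delivers the desired local bound.

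The main technical obstacle is that applying \Cref{th:regularityKlapls:1} to $w$ requires the a priori membership $w = \laps{t+\beta}u\in L^q(\R^n)$ globally, which is precisely what we aim to prove. I would break this circularity by the standard mollification trick: carry out the derivation above and invoke \Cref{th:regularityKlapls:1} for a smooth approximation $u_\epsilon$ of $u$ (mollifying and suitably truncating in high frequencies to ensure $\laps{t+\beta}u_\epsilon\in L^q(\R^n)$), extract an estimate whose constant is uniform in $\epsilon>0$, and pass to the limit $\epsilon\downarrow 0$ via weak/weak-$\ast$ compactness. A secondary point requiring care is the distributional bookkeeping: intermediate quantities such as $\laps{2s-t-\beta}\tilde\varphi$ are not Schwartz but only decay polynomially, so all pairings must be justified via $L^q$--$L^{q'}$ duality, relying on the standing hypotheses $\laps{t}u,\laps{\beta}f_1\in L^q(\R^n)$ together with $f_2\in L^q(\Omega)$; no cutoffs should be needed at this level since each pairing involves a compactly supported test function $\tilde\varphi$.
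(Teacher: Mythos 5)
Your structural ingredients are the same ones the paper uses (the Coifman--McIntosh--Meyer bound for $[\laps{\beta},\bar K]$ on $L^q$, and transferring $\laps{\beta}$ from the test function onto $f_1$), and the derived equation for $\laps{t+\beta}u$ is formally correct. The genuine gap is the final reduction: you cannot close the argument by invoking \Cref{th:regularityKlapls:1} with $t$ replaced by $t+\beta$. That theorem both \emph{assumes} a global bound $\laps{t+\beta}u\in L^{q_0}(\R^n)$ for some $q_0$ and \emph{returns} an estimate whose right-hand side contains that global norm; with your choice $p=q=q_0$ its conclusion reads $\|\laps{t+\beta}u\|_{L^q(\Omega')}\aleq \dots + \|\laps{t+\beta}u\|_{L^q(\R^n)}$, which is vacuous for the present purpose, since the hypotheses of the proposition give no control whatsoever on $\laps{t+\beta}u$ globally in any $L^{q_0}$ (and it need not belong to any such space). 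The mollification trick does not repair this: for $u_\eps=u\ast\rho_\eps$ one has $\laps{t+\beta}u_\eps=(\laps{t}u)\ast\laps{\beta}\rho_\eps$, so $\|\laps{t+\beta}u_\eps\|_{L^q(\R^n)}$ is finite for each fixed $\eps$ but of size $\eps^{-\beta}\|\laps{t}u\|_{L^q(\R^n)}$; the bound coming from \Cref{th:regularityKlapls:1} is therefore not uniform in $\eps$, and the weak-limit step yields nothing. (There is also the secondary issue that $u_\eps$ only solves a perturbed equation, since mollification does not commute with multiplication by $\bar K$, but the non-uniformity is the fatal point.)

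The paper avoids this circularity entirely by never rewriting the problem as an equation ``for $\laps{t+\beta}u$''. Instead it proves the local bound by duality: for $\psi\in C_c^\infty(\Omega_2)$ it estimates $\int_{\R^n}\laps{t}u\,\laps{\beta}\psi$ directly, writing $\psi=\bar K\brac{\tfrac{1}{\bar K}\psi}$, controlling $\int\laps{t}u\,[\laps{\beta},\bar K]\brac{\tfrac{1}{\bar K}\psi}$ by the commutator estimate, and using the hypothesized equation with the admissible test function $\eta\,\lapms{2s-t-\beta}\brac{\tfrac{1}{\bar K}\psi}$ (a cutoff $\eta\in C_c^\infty(\Omega)$ is needed since the equation only holds for test functions supported in $\Omega$); the cutoff errors are handled by disjoint-support estimates as in \Cref{prop:disjoint-support}. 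In this way every term is bounded using only $\|\laps{t}u\|_{L^q(\R^n)}$, $\|\laps{\beta}f_1\|_{L^q(\R^n)}$ and $\|f_2\|_{L^q(\Omega)}$ against $\|\psi\|_{L^{q'}}$, and the local norm $\|\laps{t+\beta}u\|_{L^q(\Omega')}$ is recovered purely by duality, with no a priori knowledge about $\laps{t+\beta}u$. If you want to salvage your scheme, you should replace the appeal to \Cref{th:regularityKlapls:1} by such a duality argument (or prove a variant of that theorem whose right-hand side involves only lower-order global information), since as written the key step assumes and consumes exactly the quantity to be estimated.
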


\begin{proof}[Proof of \Cref{pr:regularityKlapls:2}]
Let $\Omega' \subsubset \Omega_2 \subsubset \Omega$. To prove the proposition, we will show that for any  $\psi \in C_c^\infty(\Omega_2)$, 
\small{\begin{equation}\label{eq:klaps2:goal}
 \int_{\R^n} \laps{t} u \laps{\beta} \psi \aleq \brac{\|\laps{t} u\|_{L^q(\R^n)} +\|f_2\|_{L^{q}(\Omega)} + 
 \|\laps{\beta} f_1\|_{L^{q}(\R^n)}
 } \|\psi\|_{L^{q'}(\R^n)}.
\end{equation}}
which by duality implies that $\laps{t+\beta} u \in L^q(\Omega')$, with
\[
 \|\laps{t+\beta} u\|_{L^q(\Omega_2)} \aleq \|\laps{t} u\|_{L^q(\R^n)} +\|f_2\|_{L^{q}(\Omega)} + 
 \|\laps{\beta} f_1\|_{L^{q}(\R^n)}.
\]
To establish \eqref{eq:klaps2:goal} observe
\[
\begin{split}
 \int_{\R^n} \laps{t} u \laps{\beta} \psi
 =\int_{\R^n} \laps{t} u\, \laps{\beta} \brac{\bar{K} \brac{\frac{1}{\bar{K}}\psi}} 
=I+II
\end{split}
 \]
 where 
 \[
 \begin{split}
  I :=& \int_{\R^n} \bar{K}\, \laps{t} u\, \laps{\beta}\brac{\frac{1}{\bar{K}}\psi}\,\,\text{and}\,\,
II :=\int_{\R^n} \laps{t} u\, [\laps{\beta}, \bar{K}] \brac{\frac{1}{\bar{K}}\psi}
\end{split}
 \]
where we used commutator notation
\[
 [\laps{\beta},f](g) = \laps{\beta}(fg)-f\, \laps{\beta}g.
\]
Now since $\bar{K}$ is $\gamma$-H\"older continuous we can apply  Coifman-McIntosh-Meyer estimate, e.g. as in \cite[Theorem 6.1.]{LS18}, combined with Sobolev inequality to obtain
\[
II\aleq  \|\laps{t} u\|_{L^q(\R^n)}\, [\bar{K}]_{C^\gamma}\, \|\frac{1}{\bar{K}} \psi\|_{L^{q'}(\R^n)} \aleq \|\laps{t} u\|_{L^q(\R^n)}\, \|\psi\|_{L^{q'}(\R^n)}.
\]
For $I$, we argue similar to the proof of \Cref{intermediate-regular-Klaps}. To that end, let $\eta \in C_c^\infty(\Omega)$, $\eta \equiv 1$ in a neighborhood of $\Omega_2$. Then, splitting $I$  using $\eta$ we get that, 
\[
\begin{split}
I =&\int_{\R^n} \bar{K}(z)\, \laps{t} u\, \laps{2s-t}\brac{\eta \lapms{2s-t-\beta}\brac{\frac{1}{\bar{K}}\psi}}\\
&+\int_{\R^n} \bar{K}(z)\, \laps{t} u\, \laps{2s-t}\brac{(1-\eta) \lapms{2s-t-\beta}\brac{\frac{1}{\bar{K}}\psi}}\\
\end{split}
\]
We now use the equation and $\eta \lapms{2s-t-\beta}\brac{\frac{1}{\bar{K}}\psi}$ as a valid test function to conclude that 
\[
\begin{split}
I=&\int_{\R^n} f_1\, \laps{2s-t} \brac{\eta \lapms{2s-t-\beta}\brac{\frac{1}{\bar{K}}\psi}}+\int_{\R^n} f_2\, \brac{\eta \lapms{2s-t-\beta}\brac{\frac{1}{\bar{K}}\psi}}\\
&+\int_{\R^n} \bar{K}(z)\, \laps{t} u\, \laps{2s-t}\brac{(1-\eta) \lapms{2s-t-\beta}\brac{\frac{1}{\bar{K}}\psi}}\\
=&I_1 + I_2 +I_3
\end{split}
\]
where
\[\begin{split} 
I_1 :=& \int_{\R^n} \laps{\beta} f_1\, \laps{2s-t-\beta} \brac{\eta \lapms{2s-t-\beta}\brac{\frac{1}{\bar{K}}\psi}}\\
I_2 :=&\int_{\R^n} f_2\, \brac{\eta \lapms{2s-t-\beta}\brac{\frac{1}{\bar{K}}\psi}}\\
I_3 :=&\int_{\R^n} \bar{K}(z)\, \laps{t} u\, \laps{2s-t}\brac{(1-\eta) \lapms{2s-t-\beta}\brac{\frac{1}{\bar{K}}\psi}}\\
 \end{split}
\]
The term $I_1$ can be estimates using we can estimate with the help of \eqref{eq:alskd1} and \eqref{eq:alskd2}, in the same was we estimated $I$ of the proof of \Cref{intermediate-regular-Klaps}, which imply
\[
 |I_1| \aleq \|\laps{\beta} f_1\|_{L^{q}(\R^n)}\, \|\psi\|_{L^{q'}}.
\]
By Sobolev inequality, \Cref{pr:sob},
\[
 |I_2| \aleq \|f_2\|_{L^{q}(\Omega)}\, \|\psi\|_{L^{q'}}.
\]
As for $I_3$,
\[
 \begin{split}
  |I_3| \aleq & \|\laps{t} u\|_{L^q(\R^n)}\, \left \|\laps{2s-t}\brac{(1-\eta) \lapms{2s-t-\beta}\brac{\frac{1}{K}\psi}} \right \|_{L^{q'}(\R^n)}\\
 \end{split}
\]
Now observe that $1-\eta$ and $\psi$ have disjoint support, so that we can argue similarly to \eqref{eq:alskd2} to obtain
\[
\begin{split}
\left \|\laps{2s-t}\brac{(1-\eta) \lapms{2s-t-\beta}\brac{\frac{1}{K}\psi}} \right \|_{L^{q'}(\R^n)}\aleq \|\psi\|_{L^{q'}(\R^n)} + \left \|\brac{(1-\eta) \laps{\beta}\brac{\frac{1}{K}\psi}} \right \|_{L^{q'}(\R^n)}.
\end{split}
\]
Observe that $\psi \in C_c^\infty(\Omega_2)$ and $1-\eta \equiv 0$ in a neighborhood of $\Omega_2$. 
If $\beta = 0$ this implies 
\[
 (1-\eta)(x) \laps{\beta}\brac{\frac{1}{K}\psi}(x) \equiv 0.
\]
If $\beta > 0$ we use that for $y \in \Omega_2$ and $x \in \supp (1-\eta)$ we have $|y-x| \aeq 1+|x|$, and estimate
\[
 \left |\brac{(1-\eta) \laps{\beta}\brac{\frac{1}{K}\psi}}(x) \right | \aleq \int_{\R^n} (1+|x|)^{-n-\beta} \frac{1}{|K(y)|} |\psi(y)| \, dy,
\]
and thus
\[
 \left \|\brac{(1-\eta) \laps{\beta}\brac{\frac{1}{K}\psi}} \right \|_{L^{q'}(\R^n)} \aleq \Lambda\, \|\psi\|_{L^1(\R^n)} \aleq \|\psi\|_{L^q(\R^n)}
\]
We conclude that
\[
\left \|\laps{2s-t}\brac{(1-\eta) \lapms{2s-t-\beta}\brac{\frac{1}{K}\psi}} \right \|_{L^{q'}(\R^n)}\aleq \|\psi\|_{L^{q'}(\R^n)}.
\]
This establishes \eqref{eq:klaps2:goal} and that concludes the proof of the proposition.
\end{proof}

\section{Local to global equation}\label{s:localglobal}
The main idea for the proof of \Cref{th:main} is to use \Cref{th:reformulationcommie} to compare the equation of \Cref{th:main} with an easier equation to which we can apply \Cref{th:regularityKlapls:1} and \Cref{pr:regularityKlapls:2}. This works well on a local scale and the improvement of differentiability and integrability is each time incremental. So we apply this strategy repeatedly, which means that we repeatedly need to use cutoff arguments to restrict our equation to the set where we already have shown some improvement for differentiability and integrability. We describe this cutoff argument in this section. The next theorem states that if for a given $\Omega_1\subsubset \Omega$, $u$ solves the equation 
\[
 \mathcal{L}^{s}_{\Omega}u = F,\quad \text{in $\Omega_{1}$}, 
\]
then $u$ can be extended in $\R^{n}$ in a controlled way. Namely, the extension $v$ solves an equation of the form 
\[
 \mathcal{L}^{s}_{\R^{n}}v = G,\quad \text{in $\R^{n}$}
\]
and the norm of $v$ is controlled by $u$, and the norm of data $G$ is controlled by the norms $u$  and $F$. To be precise, we have the following. 
\begin{theorem}\label{th:reduction}
Let $\Omega_2 \subsubset \Omega_1 \subsubset \Omega \subseteq \R^n$ be open sets.
Take $s \in (0,1)$, $t \in [s,1)$ and $p,q \in [2,\infty)$, $r \in (0,1)$ (if $n=1$ additionally, $r \leq s$) satisfying the following conditions:
\begin{equation}\label{eq:red:rpq}
\frac{1}{q} \geq \frac{1}{p}-\frac{r}{n}, \quad \text{and} \quad \frac{1}{q} \geq \frac{1}{p}-\frac{t}{n} \quad \text{and} \quad \frac{1}{q} \geq \frac{1}{p}-\frac{1-2s+t}{n},
\end{equation}
\begin{equation}\label{eq:red:rpqt}
 \frac{1}{q} > \frac{1}{p}- \frac{r+1-2s+t}{n}
\end{equation}
and
\begin{equation}\label{eq:red:sr}
 2s-1 < r.
\end{equation}

Suppose that $K\in L^{\infty}(\R^{n}\times \R^{n})$. 
%
For any $u \in H^{s,2}(\R^n)$ such that $\laps{t} u \in L^p(\Omega_1)$ satisfies for some $f_1,f_2 \in L^{q}(\R^n)$ the equation
\[
\langle \mathcal{L}^{s}_{\Omega}u, \varphi\rangle= 
  \int_{\R^n} f_1\, \laps{r} \varphi + \int_{\R^n} f_2\, \varphi,\,\,\quad \forall \varphi \in C_c^\infty(\Omega_1).
\]
Then there exist $v \in H^{s,2}\cap H^{t,p}(\R^n)$, $\supp v \subset \Omega_1$, such that $u \equiv v$ in $\Omega_2$ and $g_1,g_2 \in L^{q}(\R^n)$ such that
\begin{equation}\label{eq:red:pdeg}
 \langle\mathcal{L}^{s}_{\R^{n}} v, \varphi\rangle= \int_{\R^n} g_1 \laps{r} \varphi + \int_{\R^n} g_2 \varphi\quad \forall \varphi \in C_c^\infty(\R^n).
\end{equation}
Moreover,
\begin{equation}\label{eq:red:vest}
\|v\|_{H^{2, t}(\R^n)} + \|\laps{t} v\|_{L^p(\R^n)} \aleq  
\|u\|_{H^{2, s}(\R^n)}+\|\laps{t} u\|_{L^p(\Omega_1)},
\end{equation}
and
\begin{equation}\label{eq:red:g12est}
 \|g_1\|_{L^{q}(\R^n)} +\|g_2\|_{L^{q}(\R^n)} \aleq \|f_1\|_{L^{q}(\R^n)}+\|f_2\|_{L^{q}(\R^n)} +\|u\|_{H^{2, s}(\R^n)}+
 \|\laps{t} u\|_{L^p(\Omega_1)},
\end{equation}
Additionally, for any $\beta \in (0,1)$ such that 
\begin{equation}
\label{eq:red:beta} 
2s-1+\beta < r \quad \text{ and } \quad t+\frac{n}{q} > \frac{n}{p}- r-1+2s+\beta,
\end{equation} we have, whenever the right-hand side is finite,
\begin{equation}\label{eq:lapsbetag1est}
 \|\laps{\beta} g_1\|_{L^{q}(\R^n)} \aleq \|f_1\|_{H^{\beta,p}(\R^n)}+\|u\|_{H^{2, s}(\R^n)}
 +\|\laps{t} u\|_{L^p(\Omega_1)}.
\end{equation}
Above, $g_1$ and $g_2$ and $v$ are independent of $q$ and $\beta$, in the sense that if we apply the statement above to $f_1,f_2 \in L^{q_1}\cap L^{q_2}$ then there is one set of functions $g_1,g_2,v$ satisfying the equations and the estimates in $L^{q_1}$ and $L^{q_2}$.
\end{theorem}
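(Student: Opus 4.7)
The strategy is standard cutoff: fix open sets $\Omega_2\subsubset\Omega_3\subsubset\Omega_1$ and a cutoff $\eta\in C_c^\infty(\Omega_3)$ with $\eta\equiv 1$ on $\Omega_2$, and set $v:=\eta u$. Then $\supp v\subset\Omega_1$ and $v\equiv u$ on $\Omega_2$ automatically. The norm bound \eqref{eq:red:vest} follows from the Leibniz-type identities $\laps{s}(\eta u)=\eta\laps{s}u+[\laps{s},\eta]u$ and $\laps{t}(\eta u)=\eta\laps{t}u+[\laps{t},\eta]u$, together with the Coifman--McIntosh--Meyer commutator estimate (exactly as in the proof of \Cref{la:cutoffarg}) and a disjoint-support tail bound for the first summand. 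The Sobolev conditions \eqref{eq:red:rpq} are invoked precisely to interpolate the required global $L^p$-bound for $u$ from $u\in H^{s,2}(\R^n)\cap H^{t,p}_{\loc}(\Omega_1)$ via \Cref{pr:sob}.

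The core of the proof is the derivation of \eqref{eq:red:pdeg}. For $\varphi\in C_c^\infty(\R^n)$, I apply the symmetric product rule
\[
\eta(x)u(x)-\eta(y)u(y)=\tfrac{\eta(x)+\eta(y)}{2}(u(x)-u(y))+\tfrac{u(x)+u(y)}{2}(\eta(x)-\eta(y))
\]
to expand $\langle\mathcal{L}^s_{\R^n}v,\varphi\rangle$, and the analogous expansion to $\eta(x)\varphi(x)-\eta(y)\varphi(y)$ inside $\langle\mathcal{L}^s_\Omega u,\eta\varphi\rangle$. Comparing the two expressions, using $K(x,y)=K(y,x)$ to symmetrize the cross terms, and using that $\eta$ is supported in $\Omega_3\subset\Omega$, a short computation yields
\[
\langle\mathcal{L}^s_{\R^n}v,\varphi\rangle=\langle\mathcal{L}^s_\Omega u,\eta\varphi\rangle+\int_{\R^n}(Tu)\,\varphi\,dx-2\iint_{(\R^n\setminus\Omega)\times\Omega}K(x,y)\frac{v(y)(\varphi(x)-\varphi(y))}{|x-y|^{n+2s}}\,dx\,dy,
\]
where $Tu(x):=2\chi_\Omega(x)\int_\Omega K(x,y)\frac{\eta(x)-\eta(y)}{|x-y|^{n+2s}}u(y)\,dy$ is a Calder\'on-type commutator. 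Since $\eta\varphi\in C_c^\infty(\Omega_1)$ is a valid test function for the equation on $u$ and $\laps{r}(\eta\varphi)=\eta\laps{r}\varphi+[\laps{r},\eta]\varphi$, rewriting $\int f_1[\laps{r},\eta]\varphi=-\int([\laps{r},\eta]f_1)\varphi$ via anti-self-adjointness of the commutator produces \eqref{eq:red:pdeg} with
\[
g_1:=\eta f_1,\qquad g_2:=\eta f_2-[\laps{r},\eta]f_1+Tu+T_{\mathrm{tail}}u,
\]
where $T_{\mathrm{tail}}u$ is the operator obtained from extracting $\varphi$ from the last tail integral.

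The remaining work is the estimate \eqref{eq:red:g12est}. The terms $\|\eta f_i\|_{L^q}\aleq\|f_i\|_{L^q}$ are immediate, and $\|[\laps{r},\eta]f_1\|_{L^q}\aleq\|f_1\|_{L^q}$ is the Coifman--McIntosh--Meyer estimate, valid since $r\in(0,1)$. For $Tu$ I use the principal-value splitting $Tu(x)=2u(x)\,\mathcal{L}^s_\Omega\eta(x)+2\int K(x,y)(\eta(x)-\eta(y))(u(y)-u(x))|x-y|^{-n-2s}\,dy$; the first summand is bounded pointwise by $|u|$ since $\mathcal{L}^s_\Omega\eta\in L^\infty(\R^n)$, and the second by the pointwise inequality $|u(x)-u(y)|\aleq|x-y|^{r}(\mathcal{M}\laps{r}u(x)+\mathcal{M}\laps{r}u(y))$ from \cite[Proposition 6.6]{S18Arma}. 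Condition \eqref{eq:red:sr} ($r>2s-1$) makes the resulting integrand absolutely integrable near the diagonal with an order-$(r+1-2s)$ Riesz-potential tail, and the $L^q$-estimate then follows from the maximal theorem applied to $\laps{r}u$ (interpolated from $\laps{s}u\in L^2(\R^n)$ and $\laps{t}u\in L^p_{\loc}$) combined with the Sobolev inequalities \eqref{eq:red:rpq}--\eqref{eq:red:rpqt}. The operator $T_{\mathrm{tail}}u$ has a uniformly bounded, polynomially decaying kernel on $\supp v\times(\R^n\setminus\Omega)$, so it is directly estimated by $\|u\|_{H^{s,2}(\R^n)}+\|\laps{t}u\|_{L^p(\Omega_1)}$. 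For \eqref{eq:lapsbetag1est}, since $g_1=\eta f_1$, applying $\laps{\beta}$ and the Leibniz rule reduces the bound to $\|f_1\|_{H^{\beta,p}}$ plus a Sobolev embedding margin, which is provided by the second part of \eqref{eq:red:beta}; the first part $2s-1+\beta<r$ is what ensures that applying $\laps{\beta}$ to the $Tu$-analysis above still yields an absolutely convergent integrand. The main obstacle is the delicate bookkeeping of Sobolev exponents needed to fit all error pieces \emph{simultaneously} into the single target space $L^q(\R^n)$ (resp. $\dot H^{\beta,q}$); the individual pointwise estimates are routine, but \eqref{eq:red:rpq}--\eqref{eq:red:beta} are calibrated precisely so that all the bounds close at once.
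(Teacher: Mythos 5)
Your cutoff construction, the treatment of the far-field (disjoint-support) tails, and the estimate of the commutator $[\laps{r},\eta]f_1$ all match the paper's strategy. The genuine gap is in your treatment of the cross term. Your symmetrization converts the term that in the paper appears as $\iint K(x,y)(\eta(x)-\eta(y))w(y)(\psi(x)-\psi(y))|x-y|^{-n-2s}$ into the zeroth-order expression $\int (Tu)\,\varphi$ with $Tu(x)=2\chi_\Omega(x)\int_\Omega K(x,y)\frac{\eta(x)-\eta(y)}{|x-y|^{n+2s}}u(y)\,dy$. But this rearrangement is only legitimate when $2s<1$: the integrand is of size $|x-y|^{1-n-2s}|u(y)|$ near the diagonal, so for $s\geq \tfrac12$ neither $Tu(x)$ nor the corresponding double integral is absolutely convergent, and Fubini-type regrouping of $u(y)\varphi(x)-u(x)\varphi(y)$ into two separate integrals is not allowed. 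Your proposed fix, the splitting $Tu(x)=2u(x)\,\mathcal{L}^s_\Omega\eta(x)+2\int K(x,y)(\eta(x)-\eta(y))(u(y)-u(x))|x-y|^{-n-2s}dy$, does not close the gap because the claim $\mathcal{L}^s_\Omega\eta\in L^\infty(\R^n)$ is false in this regime: for merely bounded measurable $K$ (the theorem assumes only $K\in L^\infty$), the principal value $P.V.\int_\Omega K(x,y)\frac{\eta(x)-\eta(y)}{|x-y|^{n+2s}}dy$ has no cancellation in $y$ around $x$ and in general does not exist when $2s\geq 1$, even for smooth $\eta$. This is exactly the case the hypothesis \eqref{eq:red:sr}, $r>2s-1$, is designed for, so the regime you lose is not peripheral.

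The paper's resolution is structurally different and is the main idea you are missing: the cross term is \emph{not} absorbed into $g_2$. One keeps the test-function difference $(\psi(x)-\psi(y))$, uses the pointwise bound $|\psi(x)-\psi(y)|\aleq |x-y|^{\alpha}(\mathcal{M}\laps{\alpha}\psi(x)+\mathcal{M}\laps{\alpha}\psi(y))$ with $2s-1<\alpha<r$ to show that $\psi\mapsto III_1$ is a bounded linear functional on $\dot H^{r,q'}(\R^n)$ (here \eqref{eq:red:rpqt} calibrates the Sobolev exponents), and then invokes the dual-space characterization (\Cref{pr:dualclass}) to produce $g_1^7\in L^q(\R^n)$ with $III_1=\int g_1^7\laps{r}\psi$. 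Consequently $g_1$ is $\eta f_1$ \emph{plus} this dual-space contribution, not $\eta f_1$ alone; the estimate \eqref{eq:lapsbetag1est} is then obtained by running the same duality argument on $\psi\mapsto III_1[\laps{\beta}\psi]$ under \eqref{eq:red:beta}, which your proposal also cannot reproduce since in your decomposition the corresponding piece sits (ill-definedly) in $g_2$. For $s<\tfrac12$ your argument could be repaired, but as written it does not prove the theorem for all $s\in(0,1)$.
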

We split the proof of \Cref{th:reduction} into several steps.

The first step is a cutoff argument, essentially replacing $u$ with $\eta u$ for a suitable cutoff function $\eta$.
\begin{lemma}\label{la:red:1}
Under the assumptions of  \Cref{th:reduction}, let $\Omega_2 \subsubset \tilde{\Omega} \subsubset \Omega_1$.
Then there exist $w \in H^{s,2}(\R^n) \cap H^{t,p}(\R^n)$ with $\supp w \subset \Omega_1$, $w \equiv u$ in a neighborhood of $\Omega_2$, and $g_1,g_2 \in L^q(\R^n)$ such that
\begin{equation}\label{eq:red:1:pde}
\langle \mathcal{L}^{s}_{\Omega}w, \varphi\rangle  = \int_{\R^n} g_1\, \laps{r} \varphi + \int_{\R^n} g_2\, \varphi \quad \forall \varphi \quad \forall \varphi \in C_c^\infty(\tilde{\Omega}).
\end{equation}
such that \eqref{eq:red:vest} (with $v$ replaced by $w$), \eqref{eq:red:g12est}, and \eqref{eq:lapsbetag1est} hold.
\end{lemma}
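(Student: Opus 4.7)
Fix $\eta \in C_c^\infty(\Omega_1)$ with $\eta \equiv 1$ on $\tilde\Omega$, and set $w := \eta u$. Then $\supp w \subset \Omega_1$ and $w \equiv u$ on $\tilde\Omega$, hence on a neighborhood of $\Omega_2$. The bound \eqref{eq:red:vest} for $w$ follows from boundedness of multiplication by a smooth compactly supported function on $H^{s,2}(\R^n)$, together with \Cref{la:cutoffarg} applied to $\laps{t} w$ to turn the local bound $\|\laps{t} u\|_{L^p(\Omega_1)}$ into the global $\|\laps{t} w\|_{L^p(\R^n)}$ (the tail of $\laps{t}w$ is harmless because $w$ has compact support).

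For any $\varphi \in C_c^\infty(\tilde\Omega)$, the elementary symmetric Leibniz identity
\[
((\eta u)(x) - (\eta u)(y))(\varphi(x)-\varphi(y)) = (u(x)-u(y))((\eta\varphi)(x)-(\eta\varphi)(y)) + (\eta(x)-\eta(y))(u(y)\varphi(x) - u(x)\varphi(y)),
\]
combined with $\eta\varphi = \varphi$ (because $\eta \equiv 1$ on $\supp\varphi$) and the original equation tested against $\eta\varphi$, gives
\[
\langle \mathcal{L}^{s}_{\Omega} w,\varphi\rangle = \int_{\R^n} f_1\,\laps{r}\varphi + \int_{\R^n} f_2\,\varphi + E(\varphi),
\]
where
\[
E(\varphi) := \int_\Omega \int_\Omega K(x,y)\frac{(\eta(x)-\eta(y))(u(y)\varphi(x) - u(x)\varphi(y))}{|x-y|^{n+2s}}\,dx\,dy.
\]
Using $u(y)\varphi(x)-u(x)\varphi(y) = u(y)(\varphi(x)-\varphi(y)) - \varphi(y)(u(x)-u(y))$ I split $E = E_A - E_B$.

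The term $E_B$ is already of the form $\int h\,\varphi$ with
\[
h(y) := \int_\Omega K(x,y)\frac{(\eta(x)-\eta(y))(u(x)-u(y))}{|x-y|^{n+2s}}\,dx,
\]
and since $|\eta(x)-\eta(y)|\aleq \min\{1,|x-y|\}$ reduces the singularity of the kernel by one full order, $h$ is morally a $(2s-1)$-order nonlocal operator applied to $u$. Because $r > 2s-1$ by \eqref{eq:red:sr}, combining Coifman--McIntosh--Meyer / Leibniz-type bounds with the Sobolev embeddings encoded in \eqref{eq:red:rpq} controls $h$ in $L^q(\R^n)$ by $\|\laps{t}u\|_{L^p(\Omega_1)}+\|u\|_{H^{s,2}(\R^n)}$, so I set $g_2 := f_2 - h$. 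For $E_A$, apply the Riesz-potential identity
\[
\varphi(x)-\varphi(y) = c\int_{\R^n}\laps{r}\varphi(z)\,\bigl(|x-z|^{r-n}-|y-z|^{r-n}\bigr)dz
\]
and Fubini to write $E_A(\varphi) = \int g_1^A(z)\,\laps{r}\varphi(z)\,dz$ with
\[
g_1^A(z) = c\int_\Omega\int_\Omega K(x,y)\,u(y)\,\frac{(\eta(x)-\eta(y))(|x-z|^{r-n}-|y-z|^{r-n})}{|x-y|^{n+2s}}\,dx\,dy,
\]
and set $g_1 := f_1 + g_1^A$. A pointwise estimate of this double integral, obtained by splitting $(x,y)$-space into the zones used in the proof of \Cref{la:Kest2} and exploiting the gain $|\eta(x)-\eta(y)|\aleq |x-y|$, bounds $|g_1^A(z)|$ by a Riesz potential of $|u|$; Sobolev embedding under \eqref{eq:red:rpq}--\eqref{eq:red:rpqt} then places $g_1^A \in L^q(\R^n)$.

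For the refined bound \eqref{eq:lapsbetag1est}, I will redo the representation of $g_1^A$ after extracting an additional $\laps{\beta}$ — legitimate because $2s-1+\beta<r$ by \eqref{eq:red:beta} — and pair against $\laps{\beta}f_1$ rather than $f_1$; the same kernel estimates close inside the sharper Sobolev window guaranteed by the second inequality of \eqref{eq:red:beta}. The main obstacle throughout is precisely this bookkeeping: verifying that the kernel splitting of the type used in \Cref{la:Kest2} produces in each zone a Riesz convolution whose image lies in $L^q(\R^n)$ under the assembled scaling constraints \eqref{eq:red:rpq}--\eqref{eq:red:rpqt}, and then re-doing the accounting inside the narrower range of \eqref{eq:red:beta} for the $\laps{\beta}g_1$ estimate. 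The independence of $w,g_1,g_2$ on $q$ and $\beta$ is automatic because their defining formulas involve neither parameter.
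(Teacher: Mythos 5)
Your cutoff is set up differently from the paper's, and this difference is what creates the gap. The paper takes $\eta \equiv 1$ on an intermediate set $\tilde{\Omega}_{1,1}$ with $\tilde{\Omega} \subsubset \tilde{\Omega}_{1,1} \subsubset \Omega_1$, so that for $\varphi \in C_c^\infty(\tilde{\Omega})$ the difference $w-u=(\eta-1)u$ is supported at a \emph{uniformly positive distance} from $\supp \varphi$. Consequently the whole error $\langle \mathcal{L}^s_\Omega w-\mathcal{L}^s_\Omega u,\varphi\rangle$ is a disjoint-support term $\int \tilde g_2 \varphi$ with $\|\tilde g_2\|_{L^\infty}\aleq \|u\|_{L^2}$, one simply sets $g_1:=f_1$, $g_2:=f_2+\tilde g_2$, and \eqref{eq:lapsbetag1est} is essentially free because $g_1$ is untouched. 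By choosing $\eta\equiv 1$ only on $\tilde{\Omega}$ itself, you instead generate the genuinely singular commutator terms $E_A$, $E_B$ (the transition region of $\eta$ reaches $\partial\tilde{\Omega}$, so there is no uniform separation from $\supp\varphi$). These are exactly the hard terms that the paper deliberately postpones to the last step of the proof of \Cref{th:reduction} (the terms $II_1$ and $III_1$, handled there by the maximal-function inequality and, for the $\laps{r}$-part, by a duality argument through \Cref{pr:dualclass}) — and in the paper they are treated only after the lemma has produced a $w$ that is \emph{globally} in $H^{t,p}(\R^n)$ with compact support. Your sketch leaves precisely these estimates ("morally a $(2s-1)$-order operator", "bookkeeping") unproved, although they are the entire content of your version of the lemma.

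Two concrete points make this more than a presentational issue. First, at the lemma stage you only have $\laps{t}u\in L^p(\Omega_1)$ \emph{locally} plus $u\in H^{s,2}(\R^n)$; the maximal-function/CMM estimates you invoke for $h$ and $g_1^A$ require global control of quantities like $\mathcal{M}\laps{\alpha}u$, so an additional localization argument is needed that you do not supply (this is what the paper's ordering of steps avoids). Second, your plan for $E_A$ — a pointwise bound of the kernel representation $g_1^A(z)$ by a Riesz potential of $|u|$ — discards the cancellation that the paper's duality argument preserves: the far-field contribution behaves like $(1+|z|)^{r-n}\|u\|_{L^2}$, which lies in $L^q(\R^n)$ only if $q(n-r)>n$, a condition not guaranteed by \eqref{eq:red:rpq}--\eqref{eq:red:rpqt} (this is also why the paper needs the duality route and the extra hypothesis $r\le s$ when $n=1$). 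The claim for \eqref{eq:lapsbetag1est} ("redo the representation after extracting $\laps{\beta}$") is likewise only asserted; the paper obtains the corresponding statement by applying \Cref{pr:dualclass} to the functional $\psi\mapsto T[\laps{\beta}\psi]$. A minor further point: \Cref{la:cutoffarg} only controls $\laps{t}w$ on sets compactly contained in the region where $w\equiv u$, so it does not by itself give the global bound $\|\laps{t}w\|_{L^p(\R^n)}$ on the transition region of $\eta$ needed for \eqref{eq:red:vest}.
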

\begin{proof}
Let $\tilde{\Omega} \subsubset \tilde{\Omega}_{1,1} \subsubset \Omega_1$ with $\Omega_2 \subsubset \tilde{\Omega}$, and let $\eta \in C_c^\infty(\Omega_1)$, $\eta \equiv 1$ in $\tilde{\Omega}_{1,1}$. 

Set $w := \eta u$, From Poincar\'e inequality and Sobolev embedding, we find that \eqref{eq:red:vest} holds.
Moreover, for any $\varphi \in C_c^\infty(\tilde{\Omega})$, we have that 
\[
\begin{split}
\langle \mathcal{L}^{s}_{\Omega}w, \varphi\rangle =&\int_{\Omega}\int_{\Omega}K(x,y) \frac{ \big (w(x)-w(y) \big )\, (\varphi(x)-\varphi(y))}{|x-y|^{n+2s}}\, dx\, dy \\
 =&\int_{\Omega}\int_{\Omega}K(x,y) \frac{ (u(x)-u(y))\, (\varphi(x)-\varphi(y))}{|x-y|^{n+2s}}\, dx\, dy \\
 &+\int_{\Omega\backslash \tilde{\Omega}_{1,1}}\int_{\tilde{\Omega}} K(x,y)\frac{ (1-\eta(y))u(y)\, \varphi(x)}{|x-y|^{n+2s}}\, dx\, dy.  \\
 \end{split}
\]
Now, to show \eqref{eq:red:1:pde} holds,  we set $g_1 := f_1$ and $g_2 := f_2 + \tilde{g}_2$ where 
\[
 \tilde{g}_2 (x) := \chi_{\tilde{\Omega}}(x) \int_{\Omega\backslash \tilde{\Omega}_{1,1}}K(x,y) \frac{ (1-\eta(y))u(y)\, \varphi(x)}{|x-y|^{n+2s}}\, dy. 
\]

To obtain the estimate \eqref{eq:red:g12est} we only need to estimate $\tilde{g}_2$. Observe that for any $y \in \supp(1-\eta)$ and $x \in \tilde{\Omega}$ we have $|x-y| \ageq c +|y|$. Consequently,
\[
 \|\tilde{g}_2\|_{L^\infty(\R^n)} \aleq \sup_{x \in \R^n} \int_{\Omega} |u(y)|\, (c+|x-y|)^{-n-2s}\, dy \aleq \|u\|_{L^2(\R^n)}.
\]
Since $\tilde{\Omega}$ is bounded and $\supp g_2 \subset \tilde{\Omega}$ we find $g_2 \in L^1 \cap L^\infty(\R^n)$, in particular
\[
 \|\tilde{g}_2\|_{L^q(\R^n)} \aleq \|u\|_{L^2(\R^n)}.
\]
This concludes the proof of \Cref{la:red:1}
\end{proof}

In the second step we increase the domain of integration of \eqref{eq:red:1:pde} from $\Omega$ to $\R^n$. 
\begin{lemma}\label{la:red:2}
Under the assumption of \Cref{th:reduction}, let $\Omega_2 \subsubset \tilde{\Omega} \subsubset \Omega_1$. Then there exist $w \in H^{s,2}(\R^n) \cap H^{t,p}(\R^n)$ with $\supp w \subset \Omega_1$, $w \equiv u$ in a neighborhood of $\Omega_2$, and $h_1,h_2 \in L^q(\R^n)$ such that
\begin{equation}\label{eq:red:2:pde}
 \langle \mathcal{L}^{s}_{\R^{n}}w, \varphi\rangle= \int_{\R^n} h_1\, \laps{r} \varphi + \int_{\R^n} h_2\, \varphi \quad \forall \varphi \quad \forall \varphi \in C_c^\infty(\tilde{\Omega}).
\end{equation}
such that \eqref{eq:red:vest} holds with $v$ replaced by $w$. Moreover, \eqref{eq:red:g12est} and \eqref{eq:lapsbetag1est} with $h_1$, $h_2$ instead of $g_1$, $g_2$, respectively.
\end{lemma}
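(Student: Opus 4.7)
The plan is to bootstrap from the output $(w, g_1, g_2)$ of Lemma~\ref{la:red:1} applied with the same choice of $\tilde{\Omega}$, and absorb the only new error term (arising from enlarging the domain of integration from $\Omega \times \Omega$ to $\R^n \times \R^n$) into an additive correction of $g_2$. Concretely, I will set $h_1 := g_1$ and $h_2 := g_2 + \tilde{h}$, where $\tilde h$ captures exactly the difference $\langle \mathcal{L}^s_{\R^n} w, \varphi\rangle - \langle \mathcal{L}^s_\Omega w, \varphi\rangle$. Since $h_1 = g_1$ is unchanged, estimates \eqref{eq:red:g12est} and \eqref{eq:lapsbetag1est} for $h_1$ are immediate from Lemma~\ref{la:red:1}; the remaining work is to identify $\tilde h$ and bound it in $L^q(\R^n)$.

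The first key step is the identification of $\tilde h$. Since $\supp w \subset \Omega_1 \subsubset \Omega$ and $\supp \varphi \subset \tilde{\Omega} \subsubset \Omega_1 \subsubset \Omega$, both $w$ and $\varphi$ vanish on $\R^n \setminus \Omega$. Splitting the domain $\R^n \times \R^n$ into the four pieces $\Omega\times\Omega$, $\Omega\times(\R^n\setminus\Omega)$, $(\R^n\setminus\Omega)\times\Omega$, $(\R^n\setminus\Omega)\times(\R^n\setminus\Omega)$, the last piece contributes zero, and by symmetry $K(x,y)=K(y,x)$ the two cross pieces coincide after a relabeling. A short computation yields
\[
\langle \mathcal{L}^s_{\R^n} w, \varphi\rangle - \langle \mathcal{L}^s_\Omega w, \varphi\rangle = 2\int_{\tilde{\Omega}} w(x)\, \varphi(x)\, M(x)\, dx, \quad M(x) := \int_{\R^n \setminus \Omega} \frac{K(x,y)}{|x-y|^{n+2s}}\, dy,
\]
so the natural choice is $\tilde h := 2\, \chi_{\tilde{\Omega}}\, w\, M$. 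With $h_1, h_2$ defined as above, equation \eqref{eq:red:2:pde} follows directly from the equation in Lemma~\ref{la:red:1}.

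The second step is the $L^q$-estimate on $\tilde h$. Here I use that $c_0 := \dist(\tilde{\Omega}, \R^n\setminus\Omega) > 0$, which gives the trivial pointwise bound
\[
|M(x)| \leq \|K\|_{L^\infty} \int_{|x-y|\geq c_0} |x-y|^{-n-2s}\, dy \aleq 1 \quad \text{for } x \in \tilde{\Omega}.
\]
Therefore $|\tilde h| \aleq \chi_{\tilde \Omega} |w|$ and $\|\tilde h\|_{L^q(\R^n)} \aleq \|w\|_{L^q(\tilde \Omega)}$. The hypothesis \eqref{eq:red:rpq} includes $\tfrac{1}{q} \geq \tfrac{1}{p} - \tfrac{t}{n}$, so Sobolev embedding applied to the compactly supported $w \in H^{t,p}(\R^n)$ yields $\|w\|_{L^q(\tilde\Omega)} \aleq \|w\|_{H^{t,p}(\R^n)}$; invoking \eqref{eq:red:vest} for $w$ from Lemma~\ref{la:red:1} produces the desired bound by $\|u\|_{H^{s,2}(\R^n)} + \|\laps{t} u\|_{L^p(\Omega_1)}$. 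Combining with Lemma~\ref{la:red:1}'s estimate on $g_2$ gives \eqref{eq:red:g12est} for $h_2$. I expect the main obstacle to be the careful four-piece decomposition and verification that the cross terms combine symmetrically; once the formula for $\tilde h$ is written down correctly, the $L^q$ estimate is almost immediate from the positive distance between $\tilde\Omega$ and $\R^n\setminus\Omega$ together with the Sobolev embedding already encoded in \eqref{eq:red:rpq}.
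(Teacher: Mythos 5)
Your proposal is correct and follows essentially the same route as the paper: reuse $(w,g_1,g_2)$ from \Cref{la:red:1}, split $\R^n\times\R^n$ into $\Omega\times\Omega$ plus cross terms plus $(\R^n\setminus\Omega)^2$, observe the last piece vanishes and the cross terms reduce to a multiplication operator $\tilde h$ supported in $\tilde\Omega$, then bound $\tilde h$ pointwise by $|w|$ using $\dist(\tilde\Omega,\R^n\setminus\Omega)>0$ and conclude via Sobolev embedding and \eqref{eq:red:rpq}. The only cosmetic remark: \Cref{th:reduction} assumes only $K\in L^\infty$, so rather than invoking $K(x,y)=K(y,x)$ to merge the two cross pieces one should keep the combination $K(x,y)+K(y,x)$, which is equally bounded and changes nothing in the estimate.
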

\begin{proof}
Take $w,g_1,g_2$ from \Cref{la:red:1} and let $\varphi \in C_c^\infty(\tilde{\Omega})$.
\begin{equation}\label{eq:splitint}
 \begin{split}
  \langle \mathcal{L}^{s}_{\R^{n}}w, \varphi\rangle=&\int_{\R^n}\int_{\R^n}K(x,y) \frac{ \big (w(x)-w(y) \big )\, (\varphi(x)-\varphi(y))}{|x-y|^{n+2s}}\, dy\, dx\\
 =& \int_{\Omega}\int_{\Omega} K(x,y)\frac{\big (w(x)-w(y) \big )\, (\varphi(x)-\varphi(y))}{|x-y|^{n+2s}}  dy\, dx\\
 &+2\int_{\R^n \backslash \Omega}\int_{\Omega} K(x,y)\frac{ \big (w(x)-w(y) \big )\, (\varphi(x)-\varphi(y))}{|x-y|^{n+2s}} dy\, dx\\
 &+ \int_{\R^n \backslash \Omega}\int_{\R^n \backslash \Omega} K(x,y)\frac{ \big (w(x)-w(y) \big )\, (\varphi(x)-\varphi(y))}{|x-y|^{n+2s}} dy\, dx.
 \end{split}
\end{equation}
The third term in right hand side of \eqref{eq:splitint} vanishes because of $\supp w \subset \Omega_1 \subsubset \Omega$. 
Moreover, since $\supp w \subset \Omega_1 \subsubset \Omega$ and $\supp \varphi \subset \tilde{\Omega} \subsubset \Omega$, the second term in \eqref{eq:splitint} becomes
\[
\begin{split}
&\int_{\R^n \backslash \Omega}\int_{\Omega}K(x,y) \frac{ \big (w(x)-w(y) \big )\, (\varphi(x)-\varphi(y))}{|x-y|^{n+2s}} dy\, dx\\
 &=\int_{\R^n\backslash \Omega}\int_{\tilde{\Omega}} K(x,y)\frac{ w(y)\, \varphi(y)}{|x-y|^{n+2s}}\, dy\, dx\, 
 =\int_{\R^n} \varphi(y)  w(y)\, \chi_{\tilde{\Omega}}(y) \int_{\R^n\backslash \Omega} \frac{K(x,y) }{|x-y|^{n+2s}}\, dx\, dy. 
 \end{split}
\]
Now the conclusion of the lemma is satisfied if we set $h_1 := g_1$ and $h_2 := g_2 + \tilde{h}_2$, where
\[
 \tilde{h}_2(y) := w(y)\, \chi_{\tilde{\Omega}}(y) \int_{\R^n\backslash \Omega} \frac{K(x,y) }{|x-y|^{n+2s}}\, dx. 
\]
To see this, first, we obtain  \eqref{eq:red:2:pde}  from \eqref{eq:red:1:pde}, \eqref{eq:splitint} and the above observations. In addition, estimates \eqref{eq:red:vest} and \eqref{eq:lapsbetag1est} hold from \Cref{la:red:2} since $w$ did not change and $h_1 = g_1$. In order to prove \eqref{eq:red:g12est} for $g_1,g_2$ replaced by $h_1,h_2$ we only need an estimate for $\tilde{h}_2$, which we obtain by arguing in similar fashion as in the proof of \Cref{la:red:1}. Since $\dist(\tilde{\Omega},\R^n \backslash \Omega) > 0$, for all points $x \in \R^n\backslash \Omega$ and $y \in \tilde{\Omega}$ we have $|x-y| \ageq 1+|x|$, and thus
\[
\begin{split}
 |\tilde{h}_2(y)| \aleq& |w(y)|\, \chi_{\tilde{\Omega}}(y) \int_{\R^n\backslash \Omega} \frac{\Lambda}{1+|x|^{n+2s}}\, dx
 \aeq  |w(y)|\, \chi_{\tilde{\Omega}}(y).
\end{split}
 \]
Thus,
\begin{equation}\label{eq:red:2:1}
 \|\tilde{h}_2\|_{L^{q}(\R^n)} \aleq \|w\|_{L^{q}(\tilde{\Omega}}.
\end{equation}
Finally, since $w \in H^{t,p}(\R^n)$ with compact support, in view of \eqref{eq:red:rpq} and Sobolev inequality, \Cref{pr:sob}, we have 
\begin{equation}\label{eq:red:2:2}
 \|w\|_{L^{q}(\tilde{\Omega})} \aleq \|\laps{t} w\|_{L^p(\R^n)}.
\end{equation}
We conclude that $\tilde{h}_2$ satisfies the estimates \eqref{eq:red:g12est} with $g_2$ replaced by $\tilde{h}_2$ in view of \eqref{eq:red:2:1}, \eqref{eq:red:2:2} and \eqref{eq:red:vest}. 
This concludes the proof of \Cref{la:red:2}.
\end{proof}

In the last step of the proof, we increase the domain of the testfunctions in \eqref{eq:red:2:pde} from $\tilde{\Omega}$ to $\R^n$. This is where the the main influence of the conditions on $p,q,r$ etc. come into play.
\begin{proof}[Proof of \Cref{th:reduction}]
Take $w,h_1,h_2$ from \Cref{la:red:2}, so that \eqref{eq:red:2:pde} holds.

Let $\eta \in C_c^\infty(\tilde{\Omega})$, $\eta \equiv 1$ in $\Omega_2$ and set $v := \eta w$. 
Since we know from from \Cref{la:red:2} that $w$ satisfies the estimates \eqref{eq:red:vest}  (with $v$ replaced by $w$), consequently in view of Poincar\'e and Sobolev inequality, so does $v$.

Fix any $\psi \in C_c^\infty(\R^n)$. Observe that
\[
\begin{split}
 \big (v(x)-v(y) \big)\, \big (\psi(x)-\psi(y) \big )  =&\big (\eta(x) w(x)-\eta(y) w(y) \big)\, \big (\psi(x)-\psi(y) \big ) \\
 =&\big (w(x)-w(y) \big )\, \big (\eta(x)\psi(x)-\eta(y)\psi(y) \big ) \\
  &+\big (w(x)-w(y) \big )\, \big (\eta(x)-\eta(y) \big )\, \psi(y)) \\
 &+\big (\eta(x)-\eta(y) \big )\, w(y)\, \big (\psi(x)-\psi(y) \big ).\\
\end{split}
\]
We can now use the map $\eta \psi \in C_c^\infty(\tilde{\Omega})$ as a test function for \eqref{eq:red:2:pde}, and  obtain
\begin{equation}\label{eq:red:finalpde1}
\begin{split}
\langle\mathcal{L}^{s}_{\R^{n}} v, \varphi\rangle=& I + II + III
\end{split}
\end{equation}
where
\[
\begin{split}
I:=&\int_{\R^n} h_1\, \laps{r}(\eta \psi) \,dx+ \int_{\R^n} h_2\, \eta \psi \,dx\\
II:=&\int_{\R^n}\int_{\R^n} K(x,y)\frac{ \big (w(x)-w(y) \big )\, \big (\eta(x)-\eta(y) \big ) \psi(y))}{|x-y|^{n+2s}}\, dx\, dy \\
III:=&\int_{\R^n}\int_{\R^n}K(x,y) \frac{ \big (\eta(x)-\eta(y) \big )w(y)\, \big (\psi(x)-\psi(y) \big )}{|x-y|^{n+2s}}\, dx\, dy. 
\end{split}
\]
Using the commutator notation $[T,m](g) = T(mg)-mTg$, we can rewrite  the first term  of $I$ as 
\[
\begin{split}
 \int_{\R^n} h_1\, \laps{r}(\eta \psi) \,dx=& \int_{\R^n} h_1\, \eta \laps{r} \psi \,dx+ \int_{\R^n} h_1\, [\laps{r},\eta](\psi)\,dx\\
 =& \int_{\R^n} \eta h_1\, \laps{r}\psi\,dx - \int_{\R^n} [\laps{r},\eta] (h_1)\, \psi \,dx.
 \end{split}
\]
In the last step we used an integration by parts, we can justify by approximation as follows: since $r \in (0,1)$ we can use the Coifman–McIntosh–Meyer commutator estimate, e.g. in the formulation in \cite[Theorem 6.1.]{LS18}, and have
\[
  \|[\laps{r},\eta] (h_1)\|_{L^{q}(\R^n)} \aleq \|\eta\|_{\lip}\, \|h_1\|_{L^{q}(\R^n)}.
\]
Also, by Leibniz formula (or Sobolev embedding) for any $\beta > 0$,
\[
 \|\laps{\beta} (\eta h_1)\|_{L^{q}(\R^n)} \aleq \|h_1\|_{H^{\beta,q}(\R^n)},
\]
whenever the right-hand side is finite.
So if we set 
\[
g_1^1 := \eta h_1 \quad \text{and} \quad g_2^1 := - [\laps{r},\eta] (h_1) \quad \text{and} \quad g_2^2 := \eta h_2
\]
we have shown that 
\[
I = \int_{\R^n} g_1^1 \laps{r}\psi + \int_{\R^n} (g_2^1 + g_2^2) \psi,
\]
and $g_1^1$, $g_2^1, g_2^2$ satisfy \eqref{eq:lapsbetag1est}, \eqref{eq:red:g12est} because $h_1, h_2$ satisfies those equations.

Similar to the argument in \eqref{eq:splitint}, by the support of $w$ and $\eta$, we have for the remaining terms of \eqref{eq:red:finalpde1}
\[
\begin{split}
II + III=&\int_{\R^n}\int_{\R^n} K(x,y)\frac{ \big (w(x)-w(y) \big )\, \big (\eta(x)-\eta(y) \big ) \psi(y)}{|x-y|^{n+2s}}\, dx\, dy \\
&+\int_{\R^n}\int_{\R^n} K(x,y)\frac{ \big (\eta(x)-\eta(y) \big )w(y)\, \big (\psi(x)-\psi(y) \big )}{|x-y|^{n+2s}}\, dx\, dy \\
=&\int_{\Omega_1}\int_{\Omega_1} K(x,y)\frac{ \big (w(x)-w(y) \big )\, \big (\eta(x)-\eta(y) \big ) \psi(y))}{|x-y|^{n+2s}}\, dx\, dy \\
&+\int_{\Omega_1}\int_{\Omega_1} K(x,y)\frac{ \big (\eta(x)-\eta(y) \big )w(y)\, \big (\psi(x)-\psi(y) \big )}{|x-y|^{n+2s}}\, dx\, dy \\
&+\int_{\tilde{\Omega}}\int_{\R^n\backslash \Omega_1} (K(x,y)+K(y,x)) \frac{w(y)\, \eta(y) \psi(y)}{|x-y|^{n+2s}}\, dx\, dy \\
&+\int_{\tilde{\Omega}}\int_{\R^n\backslash \Omega_1} (K(y, x)-K(x,y))\frac{\, \eta(y)\, w(y)\, \big (\psi(x)-\psi(y) \big )}{|x-y|^{n+2s}}\, dx\, dy.
\end{split}
\]
We set 
\[\begin{split}
 g_2^3(y) :=& \chi_{\tilde{\Omega}} w(y)\, \eta(y) \int_{\R^n\backslash \Omega_1} \frac{K(x,y)+K(y,x) }{|x-y|^{n+2s}}\, dx\\
 g_2^4(x) :=& -\chi_{\R^n \backslash \Omega_1}(x) \int_{\tilde{\Omega}}\frac{K(x,y) \eta(y)\, w(y)}{|x-y|^{n+2s}}\, dy
 +\chi_{\R^n \backslash \Omega_1}(x) \int_{\tilde{\Omega}}\frac{K(y,x) \eta(y)\, w(y)}{|x-y|^{n+2s}}\, dy\\
 g_2^5(y) :=& -2\chi_{\tilde{\Omega}}(y)\eta(y)\, w(y)\, \int_{\R^n\backslash \Omega_1} \frac{K(x,y) }{|x-y|^{n+2s}}\, dx 
 +2\chi_{\tilde{\Omega}}(y)\eta(y)\, w(y)\, \int_{\R^n\backslash \Omega_1} \frac{K(y,x) }{|x-y|^{n+2s}}\, dx
 \end{split}
\]Then
\[
 \begin{split}
II+III&=II_1 + III_1+\int_{\R^n}\psi(y) g_2^3(y)\, dy
+\int_{\R^n} \psi(x)\, g_2^4(x) dx 
+\int_{\R^n}\psi(y)\, g_2^5(y) dy 
\end{split}
\]
where 
\[
\begin{split}
 II_1 := &\int_{\Omega_1}\int_{\Omega_1} K(x,y)\frac{ \big (w(x)-w(y) \big )\, \big (\eta(x)-\eta(y) \big ) \psi(y))}{|x-y|^{n+2s}}\, dx\, dy,\,\text{and}  \\
 III_1 := &\int_{\Omega_1}\int_{\Omega_1} K(x,y)\frac{ \big (\eta(x)-\eta(y) \big )w(y)\, \big (\psi(x)-\psi(y) \big )}{|x-y|^{n+2s}}\, dx\, dy. 
 \end{split}
\]
As in the steps before,
\[
 \|g_2^3\|_{L^{q}(\R^n)} + \|g_2^5\|_{L^{q}(\R^n)}\aleq \|\laps{s} w\|_{L^p(\R^n)},
\]
As for $g_2^4$, by the distance of $x \in \R^n \backslash \Omega_1$ and $y \in \tilde{\Omega}$ we have $|x-y| \ageq c+|x|$, and thus
\[
 |g_2^4(x)|\aleq \frac{1}{1+|x|^{n+2s}}\, \|w\|_{L^1(\tilde{\Omega})} \aleq \frac{1}{1+|x|^{n+2s}}\, \|w\|_{L^p(\R^n)}.
\]
Since $\frac{1}{1+|x|^{n+2s}}$ is integrable to any power, we find that 
\[
 \|g_2^4\|_{L^{q}(\R^n)} \aleq \|w\|_{L^p(\R^n)} \aleq \|\laps{s} w\|_{L^p(\R^n)} + \|w\|_{L^2(\R^n)}.
\]
That is $g_2^3, g_2^4, g_2^5$ satisfy \eqref{eq:red:g12est} because $w$ satisfies \eqref{eq:red:vest}.

Next we estimate $II_1$.
\[
\int_{\Omega_1}\int_{\Omega_1}K(x,y) \frac{ \big (w(x)-w(y) \big )\, \big (\eta(x)-\eta(y) \big ) \psi(y)}{|x-y|^{n+2s}}\, dx\, dy
 =\int_{\R^n} \psi(y)\, g_2^6(y)\, dy 
\]
for 
\[
 g_2^6(y) := \chi_{\Omega_1}(y)\,\int_{\Omega_1} K(x,y)\frac{ \big (w(x)-w(y) \big )\, \big (\eta(x)-\eta(y) \big ) }{|x-y|^{n+2s}}\, dx.
\]
Now we have, see e.g. \cite[Proposition 6.6.]{S18Arma}, for any $\alpha < 1$,
\[
 |w(x)-w(y)| \aleq |x-y|^{\alpha}\, \brac{\mathcal{M} \laps{\alpha} w(x)+\mathcal{M} \laps{\alpha} w(y)},
\]
where $\mathcal{M}$ denotes the Hardy-Littlewood maximal function. Using this, the Lipschitz continuity of $\eta$ and the definition of the Riesz potential $\lapms{\alpha+1-2s}$, we find for any $\alpha \in (2s-1,1)$
\[
 |g_2^6| \aleq \chi_{\Omega_1} \brac{\mathcal{M} \laps{\alpha} w + \chi_{\Omega_1} \lapms{\alpha+1-2s} \brac{\chi_{\Omega_1} \mathcal{M} \laps{\alpha}w}}
\]
Observe that $t \geq s > 2s-1$. In particular in view of \eqref{eq:red:rpq} we can choose $\alpha \leq  t$ such that 
\[
 t-\frac{n}{p} \geq \alpha - \frac{n}{q},
\]
and from Sobolev embedding (observe that $\Omega_1$ is bounded) we obtain
\[
 \|g_2^6\|_{L^{q}(\R^n)} \aleq \|\laps{t} w\|_{L^p(\R^n)}.
\]
That is, we have shown that 
\[
 II_1 = \int_{\R^n} g_2^6 \psi,
\]
and $g_2^6$ satisfies \eqref{eq:red:g12est} because $w$ satisfies \eqref{eq:red:vest}.

The last term it remains to estimate is $III_1$. Set 
\[
 T[\psi]:=\int_{\Omega_1}\int_{\Omega_1} \frac{K(x,y) \big (\eta(x)-\eta(y) \big )w(y)\, \big (\psi(x)-\psi(y) \big )}{|x-y|^{n+2s}}\, dx\, dy \\
\]
Clearly $T$ is a linear operator acting on $\psi \in C_c^\infty(\R^n)$. Moreover, as above, for any $\alpha \in (2s-1,1)$,
\[
 |T[\psi]| \aleq \int_{\Omega_1}|w|\, \brac{\mathcal{M}\laps{\alpha}\psi+
 \lapms{\alpha+1-2s} \brac{\chi_{\Omega_1} \laps{\alpha} \psi}}\,dx. 
\]
Under the assumption \eqref{eq:red:sr} we can take $\alpha < r$, and have
\[
 \|\laps{\alpha} \psi\|_{L^{\frac{n{q'}}{n-(r-\alpha){q'}}}(\R^n)} \aleq \|\laps{r} \psi\|_{L^{{q'}}(\R^n)}.
\]
We repeat this argument for $T[\laps{\beta} \psi]$. If $2s-1+\beta < r$, we can choose $\alpha \in (2s-1,1)$, $\alpha > 0$, such that $\alpha + \beta < r$, (observe that since ${q'} \leq 2$, $r-\max\{2s-1,0\}-\beta < \frac{n}{{q'}}$ is certainly satisfied if $n \geq 2$, $r \in (0,1)$. If $n=1$, the condition $r \leq s$ implies $r-\max\{2s-1,0\} \leq \frac{1}{2}$ as well),
\[
 |T[\laps{\beta}\psi]|  \aleq \|w\|_{L^{\frac{nq}{n+(r-\alpha-\beta)q}}(\Omega)}\, \|\laps{\alpha+\beta} \psi\|_{L^{\frac{n{q'}}{n-(r-\alpha-\beta){q'}}}(\R^n)}  
\]
If for $\beta \geq 0$ \eqref{eq:red:beta} is satisfied, then
\[
 \|w\|_{L^{\frac{nq}{n+(r-\alpha-\beta)q}}(\Omega)} \aleq \|w\|_{H^{t,p}(\R^n)}
\]
In particular for $\beta =0$, in view of \eqref{eq:red:rpqt},
\begin{equation}\label{eq:local:Tpsi}
 |T[\psi]| \aleq \|w\|_{H^{t,p}(\R^n)}\, \|\laps{r} \psi\|_{L^{{q'}}(\R^n)}.
\end{equation}
and if \eqref{eq:red:beta} is satisfied we also have
\begin{equation}\label{eq:local:Tpsi2}
 |T[\laps{\beta} \psi]| \aleq \|w\|_{H^{t,p}(\R^n)}\, \|\laps{r} \psi\|_{L^{{q'}}(\R^n)}.
\end{equation}

\eqref{eq:local:Tpsi} implies that $T$ is a linear bounded operator on $\dot{H}^{r,{q'}}(\R^n)$. By the characterization of dual spaces, \Cref{pr:dualclass} we find $g_1^7 \in L^{q}(\R^n)$ such that 
\[
III_1 = T[\psi] = \int_{\R^n} g_1^7 \laps{r} \psi\,dx
\]
and
\[
 \|g_1^7\|_{L^{q}(\R^n)} \aleq \|w\|_{H^{t,p}(\R^n)}.
\]
If \eqref{eq:red:beta} is satisfied, \eqref{eq:local:Tpsi2} implies that $\psi \mapsto T[\laps{\beta}\psi]$ is a still linear bounded operator on $\dot{H}^{r,{q'}}(\R^n)$. From the characterization of dual spaces, \Cref{pr:dualclass} we thus find $g_{7,\beta} \in L^{q}(\R^n)$ such that 
\[
\int_{\R^n} g_1^7 \laps{r+\beta} \psi\, T[\laps{\beta}\psi] = \int_{\R^n} g_{7,\beta} \laps{r} \psi.
\]
and
\[
 \|g_{7,\beta}\|_{L^{q}(\R^n)} \aleq \|w\|_{H^{t,p}(\R^n)}.
\]
This implies that $\laps{\beta} g_1^7 = g_{7,\beta}$, and we have consequently the estimate needed for \eqref{eq:lapsbetag1est}
\[
 \|\laps{\beta} g_1^7\|_{L^{q}(\R^n)} \aleq \|w\|_{H^{t,p}(\R^n)}.
\]
That is, $g_1^7$ satisfies \eqref{eq:red:g12est} and \eqref{eq:lapsbetag1est}.

In view \eqref{eq:red:finalpde1} for 
\[
 g_1 := g_1^1 + g_1^7
\]
and 
\[
 g_2 := g_2^1+g_2^2 + g_2^3+g_2^4+g_2^5+g_2^6
\]
we have shown \eqref{eq:red:pdeg} holds, and $g_1,g_2$ satisfy the estimate \eqref{eq:red:g12est} and \eqref{eq:lapsbetag1est}. We have already observed that $w$ and $v$ satisfy the estimate \eqref{eq:red:vest}, so the proof of \Cref{th:reduction} is completed.
\end{proof}

\section{The Regularity theory: Proof of Theorem~\ref{th:main}}\label{s:proofmain}
In this section we prove the main result of the paper, Theorem~\ref{th:main}. The argument of the proof is based on iterating the following incremental  higher integrability result for a priori known smooth enough solution.  
\begin{theorem}\label{th:slighincrease}
Fix $s \in (0,1)$, $t \in [s,2s)$, $t <1$. For  given $\gamma\in (0, 1)$, $\lambda,\Lambda >0$, let $K\in \mathcal{K}(\gamma,\lambda, \Lambda)$. Suppose also that for any $2 \leq p, q < \infty$,  $u \in H^{s,2}(\R^n) \cap H^{t,p}(\R^n) \cap H^{t,2}(\R^n)$ with $\supp u \subset \Omega \subsubset \R^n$ is a solution to
\begin{equation}\label{eq:slight:1}
\langle \mathcal{L}^{s}_{\R^{n}} u, \varphi\rangle= \int_{\R^{n}} f_1\, \laps{2s-t} \varphi 
 + \int_{\R^{n}} f_2\, \varphi\quad \forall \varphi \in C_c^\infty(\R^n).
\end{equation}
Then there exists $\eps > 0$ such that if $r \in [p,p+\eps)$ and $f_1,f_2 \in L^r\cap L^p(\R^n),$ then
\[
 \|\laps{t} u\|_{L^r(\Omega)} \aleq \sum_{i=1}^2\|f_i\|_{L^r(\R^n)} + \|f_i\|_{L^p(\R^n)}+ \|\laps{t} u\|_{L^p(\R^n)}.
\]
In addition, if $\beta \in [0,\eps]$, $\laps{\beta} f_1 \in L^p(\R^n)$,  and $f_1,f_2 \in L^p(\R^n)$,  then $\laps{t+\beta} u \in L^p_{loc}(\R^n)$ and for any $\Omega \subsubset \R^n$ we have the estimate
\[
 \|\laps{t+\beta} u\|_{L^p(\Omega)} \aleq \|\laps{\beta}f_1\|_{L^p(\R^n)} + \|f_1\|_{L^p(\R^n)}+\|f_2\|_{L^p(\R^n)}+ \|\laps{t} u\|_{L^p(\R^n)}.
\]
Here, $\eps > 0$ is uniform in the following sense: $\eps$ depends only $\gamma$ and the number $\theta \in (0,1)$  which is such that
\[
 \theta < s,t,2s-t < 1-\theta,\,\,\text{and   \,\,$ 2 \leq p,q < \frac{1}{\theta}.$}
\]
\end{theorem}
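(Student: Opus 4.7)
The plan is to combine the commutator identity from \Cref{s:commutator} with the weighted-fractional-Laplacian theory of \Cref{s:fraclap}. Set $\bar K(z) := \Gamma K(z,z)$. Since $K \in \mathcal{K}(\gamma,\lambda,\Lambda)$, $\bar K$ is uniformly elliptic, bounded, and $\gamma$-H\"older continuous; indeed
\[
 |\bar K(x)-\bar K(y)| \leq \Gamma |K(x,x)-K(x,y)| + \Gamma|K(x,y)-K(y,y)| \leq 2\Gamma \Lambda |x-y|^\gamma
\]
by \eqref{H-Continuity}. From \eqref{defn-Lcommutator}, equation \eqref{eq:slight:1} is equivalent to
\[
 \int_{\R^n} \bar K(z)\, \laps{t}u\, \laps{2s-t}\varphi \,dz = \int_{\R^n} f_1\, \laps{2s-t}\varphi + \int_{\R^n} f_2\, \varphi - \tfrac{1}{\Gamma} D_{s,t}(u,\varphi) \quad \forall \varphi \in C_c^\infty(\R^n),
\]
so the game is to absorb the commutator as an $L^r$- (respectively $H^{\beta,p}$-) datum on the right-hand side and invoke \Cref{th:regularityKlapls:1} (respectively \Cref{pr:regularityKlapls:2}).

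For the first estimate, I would apply \Cref{th:reformulationcommie} in its second form: for any $\sigma \in (0,\sigma_0)$ and $\eps \in (0,\sigma/4)$,
\[
 |D_{s,t}(u,\varphi)| \leq C \int_{\R^n} \lapms{\sigma-\eps}|\laps{t}u|(x)\, |\laps{2s-t-\eps}\varphi|(x)\, dx.
\]
Writing $\laps{2s-t-\eps}\varphi = \lapms{\eps}\laps{2s-t}\varphi$ and combining H\"older with the Sobolev estimate for $\lapms{\sigma-\eps}$ and $\lapms{\eps}$ (matching dimensions via $\tfrac{1}{r}=\tfrac{1}{p}-\tfrac{\sigma}{n}$) yields
\[
 |D_{s,t}(u,\varphi)| \leq C\, \|\laps{t}u\|_{L^p(\R^n)}\, \|\laps{2s-t}\varphi\|_{L^{r'}(\R^n)}.
\]
Repeating the argument with different Sobolev exponents (using the compact support of $u$ to absorb the loss via interpolation with $\|\laps{t} u\|_{L^2}$) gives the same bound with $r$ replaced by $p$. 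By \Cref{pr:dualclass} there is then $\tilde f_1 \in L^r(\R^n) \cap L^p(\R^n)$ with $\|\tilde f_1\|_{L^r}+\|\tilde f_1\|_{L^p} \aleq \|\laps{t} u\|_{L^p}$ such that $D_{s,t}(u,\varphi) = \int \tilde f_1\, \laps{2s-t}\varphi$. The rewritten equation then falls under \Cref{th:regularityKlapls:1}, applied with data $(f_1 - \tfrac{1}{\Gamma}\tilde f_1, f_2) \in L^r \cap L^p$, and produces the first estimate for any $r \in [p, p+\eps_0)$.

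For the second estimate I would use the same decomposition, but now I additionally need $\|\laps{\beta}\tilde f_1\|_{L^p(\R^n)} \aleq \|\laps{t}u\|_{L^p(\R^n)}$ in order to apply \Cref{pr:regularityKlapls:2}. For $\psi \in C_c^\infty(\R^n)$,
\[
 \int \laps{\beta}\tilde f_1 \cdot \psi \,dx = \int \tilde f_1\, \laps{\beta}\psi \,dx = D_{s,t}(u, \lapms{2s-t-\beta}\psi),
\]
and inserting the test function $\lapms{2s-t-\beta}\psi$ into \Cref{th:reformulationcommie}, using $\laps{2s-t-\eps}\lapms{2s-t-\beta}=\lapms{\eps-\beta}$ for $0 \leq \beta < \eps$, gives
\[
 \left|\int \laps{\beta}\tilde f_1 \cdot \psi\,dx\right| \leq C \int \lapms{\sigma-\eps}|\laps{t}u|(x)\, |\lapms{\eps-\beta}\psi|(x)\,dx.
\]
H\"older and two Sobolev steps bound the right-hand side by $C\|\laps{t}u\|_{L^{p_0}(\R^n)}\|\psi\|_{L^{p'}(\R^n)}$ with $p_0 = np/(n+(\sigma-\beta)p) < p$; compact support of $u$ and interpolation between $L^2$ and $L^p$ yield $\|\laps{\beta}\tilde f_1\|_{L^p} \aleq \|\laps{t}u\|_{L^p}$. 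Feeding the decomposition into \Cref{pr:regularityKlapls:2} produces the second estimate.

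The main technical point will be the careful bookkeeping of Sobolev/H\"older exponents in the two paragraphs above: they must be chosen so that the $\sigma$-gain coming from the commutator bound translates into a strict gain either in integrability (first estimate) or in differentiability (second estimate), while every remaining norm is controllable by $\|\laps{t}u\|_{L^p}+\|\laps{t}u\|_{L^2}$. This forces the constraints $\sigma p < n$ and $\beta < \sigma$ (hence $\beta \in [0,\eps]$ with $\eps$ small). The uniformity of $\eps$ asserted in the theorem is inherited from the uniform dependence of $\sigma_0$ in \Cref{th:reformulationcommie} on $\theta$ and $\gamma$, together with $p,q < 1/\theta$ which keeps the Sobolev exponents bounded away from the critical endpoint.
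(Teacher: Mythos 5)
Your proposal is correct and follows essentially the same route as the paper: rewrite the equation as a weighted fractional Laplace equation for $K(z,z)\laps{t}u$ plus the commutator $D_{s,t}$, bound the commutator via \Cref{th:reformulationcommie} together with H\"older and Sobolev so that it defines a bounded functional on a homogeneous Sobolev space, represent it as a right-hand side datum via \Cref{pr:dualclass}, and conclude with \Cref{th:regularityKlapls:1} (integrability) and \Cref{pr:regularityKlapls:2} (differentiability). The only cosmetic difference is that the paper encodes the gain $\beta$ directly in the dual representation $D_{s,t}(u,\varphi)=\int g_\beta\,\laps{2s-t-\beta}\varphi$, whereas you fix the representation at $\beta=0$ and separately verify $\laps{\beta}\tilde f_1\in L^p$ — these are equivalent.
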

\begin{proof}
First we observe that in view of \Cref{th:reformulationcommie} and \eqref{eq:slight:1} we have for any $\varphi \in C_c^\infty(\R^n)$
{\small \begin{equation}\label{eq:Kzz22}
\begin{split}
 \int_{\R^n}K(z,z) \laps{t} u\, \laps{2s-t} \varphi \,dx=& \int_{\R^n} f_1\, \laps{2s-t} \varphi \,dx + \int_{\R^n} f_2 \varphi \,dx- D_{s,t}(u,\varphi)\\
 =& \int_{\R^{n}} \laps{\beta} f_1\, \laps{2s-t-\beta} \varphi\,dx  + \int_{\R^n} f_2 \varphi\,dx - D_{s,t}(u,\varphi),
 \end{split}
\end{equation}}
where $D_{s,t}(u,\varphi)$ is as defined in \eqref{defn-Lcommutator} and where we have taken without loss of generality that the constant $\Gamma=1$ in  \Cref{th:reformulationcommie}. 
Now we observe that the map $T$ defined as
\[
 T[\varphi]:=D_{s,t}(u,\varphi)
\]
is linear in $\varphi \in C_c^\infty(\R^n)$.  Moreover, from \Cref{th:reformulationcommie} and Sobolev embedding, \Cref{pr:sob}, we have the estimate for any $\beta \in [0,\eps]$
\[
\begin{split}
T[\varphi] \aleq&  \int_{\R^n} |\laps{t} u|(x)\,  \lapms{\sigma-\eps}|\laps{2s-t-\eps} \varphi|(x)\, dx\\
\aleq& \|\laps{t} u\|_{L^{p}(\R^n)}\, \|\lapms{\sigma-\eps}|\laps{2s-t-\eps} \varphi|\|_{L^{p'}(\R^n)}\\
\aleq& \|\laps{t} u\|_{L^{p}(\R^n)}\, \|\laps{2s-t-\eps} \varphi\|_{L^{\frac{n p'}{n+(\sigma-\eps)p'}}(\R^n)}\\
\aleq& \|\laps{t} u\|_{L^{p}(\R^n)}\, \|\laps{2s-t-\beta} \varphi\|_{L^{\frac{n p'}{n+(\sigma +\beta-\eps)p'}}(\R^n)}.
\end{split}
\]
Here $\sigma$ and $\eps$ can be chosen to depend only on $\theta$, and since $p < \frac{1}{\theta}$ we can make that choice so that $\frac{n p'}{n+(\sigma-2\eps)p'} > 1$ and Sobolev embedding is applicable with a uniform constant.

That is, $T$ belongs to $\brac{\dot{H}^{2s-t-\beta,\frac{np'}{n+(\sigma +\beta-\eps)p'}}(\R^n)}^\ast$ for any $\beta \in [0,\eps]$. By classification of the dual spaces, \Cref{pr:dualclass}, and since $\brac{\frac{np'}{n+(\sigma +\beta-\eps)p'}}'= \frac{np}{n-(\sigma +\beta-\eps)p} $ we find $g_\beta \in L^{\frac{np}{n-(\sigma +\beta-\eps)p}}(\R^n)$
\[
 \|g_\beta \|_{L^{\frac{np}{n-(\sigma +\beta-\eps)p}}(\R^n)} \aleq \|\laps{t} u\|_{L^{p}(\R^n)},
\]
and
\[
 T[\varphi] = \int_{\R^n} g_\beta\, \laps{2s-t-\beta} \varphi.
\]
That is, \eqref{eq:Kzz22} becomes for any $\beta \in [0,\eps]$ 
\[
 \int_{\R^n}K(z,z) \laps{t} u\, \laps{2s-t} \varphi = \int_{\R^n} \brac{\laps{\beta} f_1+g_\beta}\, \laps{2s-t-\beta} \varphi\,dx +\int_{\R^n} f_2 \varphi\,dx
\]
for all $ \varphi \in C_c^\infty(\R^n).$

For $\beta= 0$ we obtain from \Cref{th:regularityKlapls:1} that for any $\Omega \subsubset \R^n$, $r \in \left [p,\frac{np}{n-(\sigma -\eps)p}\right ]$, we have
\[
 \|\laps{t} u\|_{L^r(\Omega)} \aleq \|f\|_{L^r(\R^n)} + \|f\|_{L^p(\R^n)} + \|\laps{t} u\|_{L^p(\R^n)}.
\]
For $\beta \in [0,\eps]$ from \Cref{pr:regularityKlapls:2} for any $r \in [p,\frac{np}{n-(\sigma +\beta-\eps)p}]$,
\[
 \|\laps{t+\beta} u\|_{L^p(\Omega)} \aleq \|\laps{\beta} f\|_{L^{r}(\R^n)} \|\laps{t} u\|_{L^r(\R^n)}.
\]

\end{proof}

Iterating \Cref{th:slighincrease} and \Cref{th:reduction} leads to the proof of \Cref{th:main}, namely
\begin{theorem}\label{th:main2}
Fix $s \in (0,1)$, $t \in [s,2s)$, $t <1$. For  given $\gamma\in (0, 1)$, $\lambda,\Lambda >0$, let $K\in \mathcal{K}(\gamma,\lambda, \Lambda)$.
Let $\Omega' \subsubset \Omega'' \subsubset \Omega \subseteq \R^n$ be two open sets.  
Assume that $u \in W^{s,2}(\Omega)$ satisfies  the equation
\begin{equation}\label{eq:m:pde}
 \langle\mathcal{L}^{s}_{\Omega}u, \varphi\rangle= \int_{\R^n} f_1 \laps{2s-t} \varphi + \int f_2 \varphi \quad \forall \varphi \in C_c^\infty(\Omega'').
\end{equation}
If $f_1,f_2 \in L^q(\Omega)\cap L^2(\R^n)$, $q \in [2,\infty)$, then $\laps{t} u \in L^q(\Omega')$ and we have 
\[
 \|\laps{t} u\|_{L^q(\Omega')} \leq C(\Omega,\Omega',\Omega'',s,t,p,q) \brac{\|u\|_{H^{s,2}(\Omega)} + \sum_{i=1}^2 \|f_i\|_{L^q(\Omega)}+\|f_i\|_{L^2(\R^n)}}.
\]
\end{theorem}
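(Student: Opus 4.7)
The plan is to deduce \Cref{th:main2} by bootstrapping from $u \in W^{s,2}(\Omega)$ up to $\laps{t} u \in L^{q}(\Omega')$ through repeated application of the localization theorem \Cref{th:reduction} followed by the incremental regularity theorem \Cref{th:slighincrease}. Because the incremental gain $\eps_{0} > 0$ of \Cref{th:slighincrease} is uniform in any compact subrange of $s, t, 2s-t \in (0,1)$ and $2 \leq p < 1/\theta$, only finitely many iterations are needed to reach any prescribed $q < \infty$.

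Concretely, I would first fix a telescoping chain of open sets
\[
\Omega' = U_{0} \subsubset U_{1} \subsubset \cdots \subsubset U_{N} \subsubset \Omega'',
\]
together with a monotone sequence of exponent pairs $(t_{0},p_{0}) = (s,2), (t_{1},p_{1}), \ldots, (t_{N},p_{N}) = (t,q)$ whose consecutive increments $t_{k+1}-t_{k}$ and $p_{k+1}-p_{k}$ are smaller than the uniform threshold $\eps_{0}$ of \Cref{th:slighincrease}. The base case $\laps{s}\tilde u \in L^{2}_{\mathrm{loc}}(\Omega'')$ is immediate for any $W^{s,2}$-extension $\tilde u$ of $u$. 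At the inductive step $k$, given $\laps{t_{k}} u \in L^{p_{k}}(U_{N-k})$, I would apply \Cref{th:reduction} with $(\Omega_{2},\Omega_{1}) = (U_{N-k-1}, U_{N-k})$ and parameters $(t,p,q,r) = (t_{k},p_{k},p_{k},2s-t_{k})$ to produce a compactly supported $v_{k} \in H^{s,2}(\R^{n}) \cap H^{t_{k},p_{k}}(\R^{n})$ coinciding with $u$ on $U_{N-k-1}$ and satisfying a global equation $\mathcal{L}^{s}_{\R^{n}} v_{k} = \laps{2s-t_{k}} g_{1}^{(k)} + g_{2}^{(k)}$ on $\R^{n}$, with $g_{1}^{(k)},g_{2}^{(k)} \in L^{p_{k}}(\R^{n})$ controlled by the data. \Cref{th:slighincrease} applied to $v_{k}$ then yields $\laps{t_{k+1}} v_{k} \in L^{p_{k+1}}(\R^{n})$, and the identity $v_{k} \equiv u$ on $U_{N-k-1}$ transports the improvement back to $u$, closing the induction.

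The main technical obstacle is the simultaneous verification at each iteration of the Sobolev balance conditions \eqref{eq:red:rpq}--\eqref{eq:red:rpqt}, the compatibility \eqref{eq:red:sr}, and — whenever the increment is in the differentiability direction — the refined condition \eqref{eq:red:beta} that activates the companion estimate \eqref{eq:lapsbetag1est} of \Cref{th:reduction}. These constraints dictate both the strict nesting of the $U_{k}$'s (so that the cutoff argument of \Cref{th:reduction} has room to operate) and the smallness of the increments $t_{k+1}-t_{k}$ and $p_{k+1}-p_{k}$. A secondary subtlety is that the differentiability phase of the bootstrap must not rely on extra regularity of $f_{1}$ beyond $L^{q}(\Omega)\cap L^{2}(\R^{n})$; here one exploits the Riesz-potential gain hidden inside the commutator $D_{s,t}$ of \Cref{th:reformulationcommie}, which is precisely what allows \eqref{eq:lapsbetag1est} of \Cref{th:reduction} to provide the control on $\laps{\beta} g_{1}^{(k)}$ required by the second conclusion of \Cref{th:slighincrease}. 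Once the exponent chain and nested domains are chosen consistently with all these constraints, $N$ is bounded in terms of $s, t, q, \gamma, \Omega, \Omega', \Omega''$ only, and chaining the incremental estimates collapses to the final bound with the claimed constant.
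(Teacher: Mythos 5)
Your overall architecture (nested domains, alternate applications of \Cref{th:reduction} and \Cref{th:slighincrease}, transfer back to $u$ via $v_k\equiv u$ on the smaller set) is the same as the paper's, but there are two genuine gaps. The central one concerns the differentiability half of the bootstrap. You apply \Cref{th:reduction} with $r=2s-t_k$, yet the hypothesis of \Cref{th:reduction} requires the equation to already be in the form $\int f_1\laps{r}\varphi+\int f_2\varphi$ \emph{for that} $r$; the given equation only has $r=2s-t$, so for $t_k<t$ the hypothesis is not met. Moreover, the second conclusion of \Cref{th:slighincrease} genuinely requires $\laps{\beta}g_1^{(k)}\in L^{p_k}(\R^n)$, and \eqref{eq:lapsbetag1est} of \Cref{th:reduction} only delivers this when the \emph{input} $f_1$ is already in $H^{\beta,p}$ — it is proved by a cutoff/duality argument and has nothing to do with the commutator $D_{s,t}$. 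The Riesz-potential gain in \Cref{th:reformulationcommie} only converts the commutator contribution inside \Cref{th:slighincrease} into a lower-order term; it does not remove the hypothesis $\laps{\beta}f_1\in L^p$ on the external data. The paper resolves both points with a preliminary rewriting step (Step 0 of its proof): for every $\sigma\in[s,t]$ it replaces the data by $f_{1,\sigma}:=\lapms{t-\sigma}(\eta f_1)$ (plus disjoint-support corrections absorbed into $f_2$), so the equation holds with $\laps{2s-\sigma}\varphi$ on the right and, crucially, $\laps{\beta}f_{1,\sigma}=\lapms{t-\sigma-\beta}(\eta f_1)\in L^q$ for free whenever $\beta\le t-\sigma$. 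Without this device your iteration has no source for the smoothness of $g_1^{(k)}$ that the differentiability steps need, and the scheme does not close.

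A second, more mechanical flaw: you invoke \Cref{th:reduction} with $\tilde q=p_k$, so the resulting $g_1^{(k)},g_2^{(k)}$ are only in $L^{p_k}(\R^n)$. But to raise integrability from $p_k$ to $p_{k+1}$ via \Cref{th:slighincrease} you need the global data in $L^{p_{k+1}}$; with your choice the integrability step is vacuous. The paper instead takes $\tilde q=q_{k+1}$ strictly larger than $p_k$ (limited by the Sobolev conditions \eqref{eq:red:rpq}--\eqref{eq:red:rpqt} and capped at $q$), which is admissible because $f_1,f_2\in L^q\cap L^2(\R^n)$ lie in every intermediate Lebesgue space. Related smaller inaccuracies: \Cref{th:slighincrease} gives local (not global) conclusions, so the cutoff comparison \Cref{la:cutoffarg} is needed to pass the estimate from $v_k$ back to $u$; and improving $t_k$ and $p_k$ simultaneously in one step forces both hypotheses of \Cref{th:slighincrease} at once, whereas the paper separates each iteration into an integrability sub-step and a differentiability sub-step, each with its own application of \Cref{th:reduction} at the appropriate order $\sigma\in\{s,\,s+\beta_k,\,s+\beta_{k+1}\}$ and its own nested domain.
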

\begin{proof}
Fix $\theta \in (0,1)$ such that
\begin{equation}\label{eq:maint:thetachoice}
 t < 1-10\theta,\ 10\theta < s < 1-10\theta,\ 10\theta < 2s-t < 1-10\theta, 2 \leq q < \frac{1}{10\theta}.
\end{equation}

We also fix $\eps = \eps(\theta,\gamma)$ from \Cref{th:slighincrease}, and w.l.o.g. $\eps < \frac{1}{10} \frac{\theta}{n}$.

\underline{Step 0: Rewriting the equation}
Take some cutoff function $\eta \in C_c^\infty(\Omega)$ with $\eta \equiv 1$ in a neighbourhood of $\Omega''$
\[
\begin{split}
 \int_{\R^{n}} f_1 \laps{2s-t} \varphi =& \int_{\R^{n}} \eta f_1 \laps{2s-t} \varphi \,dx+ \int_{\R^{n}} (1-\eta) f_1 \laps{2s-t} \varphi \,dx\\
  =&\int_{\R^{n}} \eta f_1 \laps{2s-t} \varphi \,dx+ \int_{\R^n} \chi_{\Omega''} \laps{2s-t}\brac{(1-\eta) f_1} \varphi \,dx.
\end{split}
 \]
Now observe that by the disjoint support of $\chi_{\Omega''}$ and $1-\eta$ we have
\[
 \|\chi_{\Omega''} \laps{2s-t}\brac{(1-\eta) f_1}\|_{L^\infty} \aleq \|f_1\|_{L^2(\R^n)}
\]
For $\sigma \in [s,t]$ we set
\[
 \tilde{f}_{1,\sigma} := \lapms{t-\sigma}(\eta f_1) 
\]
and
\[
 \tilde{f}_2 := \chi_{\Omega''} f_2 + \chi_{\Omega''} \laps{2s-t}\brac{(1-\eta) f_1}
\]
then we have for all $\varphi \in C_c^\infty(\Omega''),$
\[
 \int_{\Omega}\int_{\Omega} K(x,y)\frac{ (u(x)-u(y))\, (\varphi(x)-\varphi(y))}{|x-y|^{n+2s}}\, dx\, dy = \int_{\R^n} \tilde{f}_{1,\sigma} \laps{2s-\sigma} \varphi \,dx+ \int \tilde{f}_2 \varphi \,dx.
\]
Moreover $\tilde{f}_2 \in L^q(\R^n) \cap L^2(\R^n)$ and since $t-s \leq 1-s < 1-\theta$ we have by Sobolev embedding, \Cref{pr:sob},
\[
 \|\lapms{t-s}(\eta f_1) \|_{L^q(\R^n)} \leq C(\theta) \|\eta f_1\|_{L^{\frac{nq}{n+(t-s) q}}} \aleq \|\eta f_1\|_{L^q(\R^n)}
\]
If $n\geq 2$ we also have
\begin{equation}\label{eq:22:L2est}
 \|\lapms{t-s}(\eta f_1) \|_{L^2(\R^n)} \leq C(\theta) \|\eta f_1\|_{L^{\frac{2n}{n+(t-s) 2}}} \aleq \|\eta f_1\|_{L^2(\R^n)}
\end{equation}
so that for $n\geq 2$ we have found $\tilde{f}_{1,\sigma}, \tilde{f}_2 \in L^q\cap L^2(\R^n)$ such that \eqref{eq:m:pde} holds for $t$ replaced with $\sigma$ and $f_1$, $f_2$ replaced with $\tilde{f}_{1,\sigma}, \tilde{f}_2$.

If $n=1$ we need a slight adaptation to have \eqref{eq:22:L2est} (if $t$ is close to one and $s$ is close to zero): Let $\eta_2 \in C_c^\infty(\R^n)$ with $\eta_2 \equiv 1$ in a neighborhood of $\Omega''$. Then we set 
\[
 \tilde{\tilde{f}}_{1,\sigma} := \eta_2 \lapms{t-\sigma}(\eta f_1) 
\]
and 
\[
 \tilde{\tilde{f}}_{2,\sigma} := \tilde{f}_{2} + \chi_{\Omega''} \brac{\laps{2s-t} \brac{(1-\eta_2) \lapms{t-\sigma}(\eta f_1) }}.
\]
By the disjoint support we then get the same estimates as before.

In conclusion, for any $\sigma \in [s,t]$ we have $f_{1,\sigma},f_{2,\sigma} \in L^2\cap L^q(\R^n)$, such that for any $ \varphi \in C_c^\infty(\Omega'')$, 
{\small \begin{equation}\label{eq:m:pdesigma}
 \int_{\Omega}\int_{\Omega} K(x,y)\frac{ (u(x)-u(y))\, (\varphi(x)-\varphi(y))}{|x-y|^{n+2s}}\, dx\, dy = \int_{\R^n} f_{1,\sigma} \laps{2s-\sigma} \varphi\,dx + \int f_{2,\sigma} \varphi\,dx .
\end{equation} }

Let $L \in \N$ a number which we shall define later, and choose nested open sets 
\begin{equation}\label{eq:omeganest} \Omega' := \Omega_{2L} \subsubset \ldots \subsubset \Omega_1 \subsubset \Omega'' \subsubset \Omega.\end{equation} 
\underline{Step 1: First improvement}
\begin{equation}\label{eq:m:q1}
 \frac{1}{q_1} := \max\left \{\frac{1}{2} -\frac{\theta}{n},\theta \right \}
\end{equation}
then \eqref{eq:red:rpq} and \eqref{eq:red:rpqt} are satisfied in view of \eqref{eq:maint:thetachoice}.
\begin{equation}\label{eq:m:b1}
 \beta_1 := \min\{\frac{1}{2}\eps,s-t\}
\end{equation}
\begin{equation}\label{eq:m:p1}
 p_1 := \min\{2+\frac{1}{2}\eps,q_1\}
\end{equation}
We claim that
\begin{equation}\label{eq:m:s1}
\begin{split}
 \|\laps{s+\beta_1} u\|_{L^{p_1}(\Omega_2)}  &+ \|\laps{s} u\|_{L^{p_1}(\Omega_2)} \\
 &\aleq \sum_{i=1}^2 \brac{\|f_i\|_{L^2(\R^n)} + \|f_i\|_{L^q(\R^n)}} + \|u\|_{H^{s,2}(\R^n)}.
\end{split}
\end{equation}

We apply \Cref{th:reduction} for $\tilde{s} = \tilde{t} = \tilde{r} = s$, $\tilde{p} =2$ and the equation to \eqref{eq:m:pdesigma} with $\sigma = s$. Then \eqref{eq:red:sr} is satisfied since $s < 1$. 
We also choose $\tilde{q} := q_1 \in (2,q]$ then \eqref{eq:red:rpq} and \eqref{eq:red:rpqt} are satisfied in view of \eqref{eq:maint:thetachoice}.

Observe that $f_{1,\sigma},f_{2,\sigma} \in L^q \cap L^2(\R^n) \subset L^{q_1}(\R^n)$, so from \Cref{th:reduction} we obtain $v_1 \in H^{s,2}(\R^n)$, $\supp v_1 \subsubset \Omega''$
\[
 v_1 \equiv u \text{\quad in a neighborhood of $\Omega_1$}
\]
and for any $ \varphi \in C_c^\infty(\R^n)
$
\begin{equation}\label{eq:m:pdev1}
\begin{split}
 \int_{\R^n}\int_{\R^n}K(x,y) &\frac{(v_1(x)-v_1(y))\, (\varphi(x)-\varphi(y))}{|x-y|^{n+2s}}\, dx\, dy \\
 &= \int_{\R^n} g_{1,s} \laps{s} \varphi \,dx+ \int_{\R^n} g_{2,s} \varphi \,dx,
 \end{split}
 \end{equation}
for some $g_1, g_2 \in L^{q_1}(\R^n)$ with the estimate
\[
 \|g_{1,s}\|_{L^{q_1}(\R^n)} + \|g_{2,s}\|_{L^{q_1}(\R^n)} \aleq \sum_{i=1}^2\brac{\|f_i\|_{L^2(\R^n)} + \|f_i\|_{L^q(\R^n)}} + \|u\|_{H^{s,2}(\R^n)}
\]
and
\[
 \|\laps{s} v_1\|_{L^2(\R^n)} \aleq \|u\|_{H^{s,2}(\R^n)}
\]
and in view of \eqref{eq:lapsbetag1est}, for any $0 \leq \alpha \leq \min\{\theta,t-s\}$ we have (for $\tilde{\beta} := \alpha$) that \eqref{eq:red:beta} is satisfied 
\[
 \|\laps{\alpha} g_{1,\sigma}\|_{L^2(\R^n)} \aleq \|f_1\|_{L^2(\R^n)}+\|u\|_{H^{s,2}(\R^n)}.
\]
In view of \Cref{th:slighincrease} (applied to $\tilde{t} := s$ and the equation \eqref{eq:m:pdev1})  we have the estimate
\[
\begin{split}
 \|\laps{s} v_1\|_{L^{p_1}(\Omega'')} \aleq& \sum_{i=1}^2 \brac{\|g_i\|_{L^{p_1}(\R^n)} + \|g_i\|_{L^{2}(\R^n)}} + \|\laps{s} v_1\|_{L^2(\R^n)}\\
 \aleq& \sum_{i=1}^2 \brac{\|f_i\|_{L^2(\R^n)} + \|f_i\|_{L^q(\R^n)}} + \|u\|_{H^{s,2}(\R^n)}.
 \end{split}
\]
Moreover, since we applied \Cref{th:slighincrease} to the equation \eqref{eq:m:pdev1}, we have 
\[
\begin{split}
 \|\laps{s+\beta_1} v_1\|_{L^{2}(\Omega'')} \aleq&\sum_{i=1}^2 \brac{\|f_i\|_{L^2(\R^n)} + \|f_i\|_{L^q(\R^n)}} + \|u\|_{H^{s,2}(\R^n)}.
 \end{split}
\]
Since $u \equiv v_1$ in a neighborhood of $\Omega_1$, by \Cref{la:cutoffarg}, we find that this implies
\begin{equation}\label{eq:s1:lapsulp1}
 \|\laps{s} u\|_{L^{p_1}(\Omega_1)} \aleq \sum_{i=1}^2 \brac{\|f_i\|_{L^2(\R^n)} + \|f_i\|_{L^q(\R^n)}} + \|u\|_{H^{s,2}(\R^n)}.
\end{equation}
and
\begin{equation}\label{eq:s1:lapsbetaul2}
\begin{split}
 \|\laps{s+\beta_1} u\|_{L^{2}(\Omega_1)} \aleq&\sum_{i=1}^2 \brac{\|f_i\|_{L^2(\R^n)} + \|f_i\|_{L^q(\R^n)}} + \|u\|_{H^{s,2}(\R^n)}.
 \end{split}
\end{equation}

In order to obtain \eqref{eq:m:s1} we need to have an $L^{p_1}$-estimate in \eqref{eq:s1:lapsbetaul2}. For this we repeat this argument for the equation \eqref{eq:m:pdesigma} with $\sigma = s+\beta_1$ (this is only necessary if $t > s$, otherwise $\beta_1 = 0$ and we are done with \eqref{eq:s1:lapsulp1}).

We apply \Cref{th:reduction} for $\tilde{s} = s$, $\tilde{t} = s+\beta_1$ and $\tilde{r} = s-\beta_1$, $\tilde{p} =2$, $\tilde{q} := q_1$ to \eqref{eq:m:pdesigma} with $\sigma = s+\beta_1$.  Again \eqref{eq:red:sr} is satisfied, since $s+\beta_1 < s+t-s = t < 1$. \eqref{eq:red:rpq}, and \eqref{eq:red:rpqt} are satisfied in view of \eqref{eq:maint:thetachoice}. Then \Cref{th:reduction} implies the existence of $v_2 \in H^{s+\beta_1,2}(\R^n)$, $\supp v_2 \subsubset \Omega''$
\[
 v_2 \equiv u \text{\quad in a neighborhood of $\Omega_2$}
\]
and for all $\varphi \in C_c^\infty(\R^n)$
\begin{equation}\label{eq:m:pdev2}
 \begin{split}
 \int_{\R^n}\int_{\R^n}K(x,y)&\frac{ (v_2(x)-v_2(y))\, (\varphi(x)-\varphi(y))}{|x-y|^{n+2s}}\, dx\, dy \\
 &= \int_{\R^n} g_{1,s+\beta_1} \laps{s+\beta_1} \varphi \,dx + \int_{\R^n} g_{2,s+\beta_1} \varphi \,dx,
 \end{split}
\end{equation} 
for some $g_{1,s+\beta_1}, g_{2,s+\beta_1} \in L^{q_1}(\R^n)$ with the estimate
\[
 \|g_{1,s+\beta_1}\|_{L^{q_1}(\R^n)} + \|g_{2,s+\beta_1}\|_{L^{q_1}(\R^n)} \aleq \sum_{i=1}^2\brac{\|f_i\|_{L^2(\R^n)} + \|f_i\|_{L^q(\R^n)}} + \|u\|_{H^{s,2}(\R^n)}
\]
and (with the additional help of \eqref{eq:s1:lapsbetaul2}),
\[
 \|\laps{s+\beta} v_2\|_{L^2(\R^n)} \aleq \sum_{i=1}^2 \brac{\|f_i\|_{L^2(\R^n)} + \|f_i\|_{L^q(\R^n)}} + \|u\|_{H^{s,2}(\R^n)}.
\]
Applying \Cref{th:slighincrease} (for $\tilde{t} := s+\beta_1$ and the equation \eqref{eq:m:pdev2}, observe that $\eps$ does not change) we have
\[
 \|\laps{s+\beta} v_2\|_{L^{p_1}(\Omega_2)} \aleq \sum_{i=1}^2 \brac{\|f_i\|_{L^2(\R^n)} + \|f_i\|_{L^q(\R^n)}} + \|u\|_{H^{s,2}(\R^n)}.
\]
By \Cref{la:cutoffarg}, since $u \equiv v_2$ in a neighborhood of $\Omega_2$ we find 
\begin{equation}\label{eq:s1:lapsbetaul2p1}
\begin{split}
 \|\laps{s+\beta_1} u\|_{L^{p_1}(\Omega_2)} \aleq&\sum_{i=1}^2 \brac{\|f_i\|_{L^2(\R^n)} + \|f_i\|_{L^q(\R^n)}} + \|u\|_{H^{s,2}(\R^n)}.
 \end{split}
\end{equation}
Together, \eqref{eq:s1:lapsbetaul2p1} and \eqref{eq:s1:lapsulp1} imply \eqref{eq:m:s1}.

\underline{Step 2: Iteration}
We define for $k \in \N$, 
\begin{equation}\label{eq:m:qkp1}
 \frac{1}{q_{k+1}} := \max\left \{\frac{1}{p_k} -\frac{\theta}{n},\frac{1}{q}\right \},
\end{equation}
\begin{equation}\label{eq:m:pkp1}
 p_{k+1} := \min\left \{p_k+\frac{1}{2}\eps,q_{k+1} \right \},
\end{equation}
\begin{equation}\label{eq:m:bkp1}
 \beta_{k+1} := \beta_{k}+\min\left \{\frac{1}{2}\eps,s-t-\beta_k \right \}.
\end{equation}
starting from $q_1,p_1,\beta_1$ as in \eqref{eq:m:q1}, \eqref{eq:m:p1}, \eqref{eq:m:b1}, respectively.

Our goal is to show that for any $k \in \N$, 
\begin{equation}\label{eq:m:sk}
\begin{split}
 \|\laps{s+\beta_k} u\|_{L^{p_k}(\Omega_{2k})}  &+\|\laps{s} u\|_{L^{p_k}(\Omega_{2k})} \\
 &\aleq \sum_{i=1}^2 \brac{\|f_i\|_{L^2(\R^n)} + \|f_i\|_{L^q(\R^n)}} + \|u\|_{H^{s,2}(\R^n)}.
 \end{split}
\end{equation}
We prove this by induction  induction.  
We already have shown \eqref{eq:m:sk} to hold for $k=1$, \eqref{eq:m:s1}.

So assume as induction hypothesis that for some $k \in \N$ \eqref{eq:m:sk} holds. We need to show 
\begin{equation}\label{eq:m:skp1}
 \|\laps{s} u\|_{L^{p_{k+1}}(\Omega_{2k+2})} \aleq \sum_{i=1}^2 \brac{\|f_i\|_{L^2(\R^n)} + \|f_i\|_{L^q(\R^n)}} + \|u\|_{H^{s,2}(\R^n)}.
\end{equation}
and
\begin{equation}\label{eq:m:skbetap1}
\|\laps{s+\beta_{k+1}} u\|_{L^{p_{k+1}}(\Omega_{2k+2})} \aleq \sum_{i=1}^2 \brac{\|f_i\|_{L^2(\R^n)} + \|f_i\|_{L^q(\R^n)}} + \|u\|_{H^{s,2}(\R^n)}.
\end{equation}

\underline{First we treat \eqref{eq:m:skp1}}. If $q_{k} = q$ there is nothing to show, because then $p_k = p_{k+1} = q$. If not, we apply \Cref{th:reduction} for $\tilde{s} = \tilde{t} = \tilde{r} = s$, $\tilde{p} =p_{k}$, $\tilde{q} := q_{k}$ to \eqref{eq:m:pdesigma} with $\sigma = s$. Again \eqref{eq:red:sr} is satisfied since $s < 1$. \eqref{eq:red:rpq} and \eqref{eq:red:rpqt} are satisfied in view of \eqref{eq:maint:thetachoice} and the fact that since $\eps < \frac{1}{10} \frac{\theta}{n}$ we have that $\left |\frac{1}{p_k} - \frac{1}{p_{k-1}} \right| \leq \frac{\theta}{n}$. 

Then \Cref{th:reduction} implies the existence of $v_{1} \in H^{s,p_{k}}(\R^n)$, $\supp v_1 \subsubset \Omega''$
\[
 v_1 \equiv u \text{\quad in a neighborhood of $\Omega_{2k+1}$}
\]
and for all $\varphi \in C_c^\infty(\R^n)$, 
\begin{equation}\label{eq:m:pdev2k}
\begin{split}
 \int_{\R^n}\int_{\R^n}K(x,y)&\frac{ (v_1(x)-v_1(y))\, (\varphi(x)-\varphi(y))}{|x-y|^{n+2s}}\, dx\, dy \\
 &= \int_{\R^n} g_{1,s} \laps{s} \varphi \,dx+ \int_{\R^n} g_{2,s} \varphi \,dx,
 \end{split}
\end{equation}
for some $g_{1,s}, g_{2,s} \in L^{q_k}(\R^n)$ with the estimate
\[
 \|g_{1,s}\|_{L^{q_k}(\R^n)} + \|g_{2,s}\|_{L^{q_k}(\R^n)} \aleq \sum_{i=1}^2\brac{\|f_i\|_{L^2(\R^n)} + \|f_i\|_{L^q(\R^n)}} + \|u\|_{H^{s,2}(\R^n)}.
\]
and additionally (using the induction hypothesis \eqref{eq:m:sk})
\[
 \|\laps{s} v_1\|_{L^{p_k}(\R^n)} \aleq \sum_{i=1}^2 \brac{\|f_i\|_{L^2(\R^n)} + \|f_i\|_{L^q(\R^n)}} + \|u\|_{H^{s,2}(\R^n)}.
\]
Applying \Cref{th:slighincrease} for $\tilde{t} := s$ and the equation \eqref{eq:m:pdev2k} we obtain
\[
\begin{split}
 \|\laps{s} v_1\|_{L^{p_{k+1}}(\Omega'')} \aleq& \sum_{i=1}^2 \brac{\|g_i\|_{L^{p_{k+1}}(\R^n)} + \|g_i\|_{L^{2}(\R^n)}} + \|\laps{s} v_1\|_{L^{p_k}(\R^n)}\\
 \aleq& \sum_{i=1}^2 \brac{\|f_i\|_{L^2(\R^n)} + \|f_i\|_{L^q(\R^n)}} + \|u\|_{H^{s,2}(\R^n)}.
 \end{split}
\]
Since $u \equiv v_1$ in a neighborhood of $\Omega_{2k+1}$ this implies \eqref{eq:m:skp1}.

\underline{Now we treat \eqref{eq:m:skbetap1}}.
We apply \Cref{th:reduction} to $\tilde{s} = s$, $\tilde{t} = s+\beta_{k}$, $\tilde{p} = p_{k}$, $\tilde{q} = q_k$, $\tilde{r} = s-\beta_k$ and to the equation \eqref{eq:m:pdesigma} with $\sigma = s+\beta_{k}$. \eqref{eq:red:sr} is satisfied since $s+\beta_k \leq t < 1$. As before, \eqref{eq:red:rpq}, \eqref{eq:red:rpqt} are satisfied in view of the choice of $\theta$, $q_k$, $p_k$, $\beta_{k+1}$. Since we have by assumption \eqref{eq:m:sk}, we find $v_2 \in H^{s,2}\cap H^{s+\beta_k}(\R^n)$, $v_2 \equiv u$ in a neighborhood of $\Omega_{2k+1}$, $g_1,g_2 \in L^{q_k}(\R^n)$ such that for all $ \varphi \in C_c^\infty(\R^n)
$
\[
 \int_{\R^n}\int_{\R^n}K(x,y)\frac{ (v_2(x)-v_2(y))\, (\varphi(x)-\varphi(y))}{|x-y|^{n+2s}}\, dx\, dy = \int_{\R^n} g_{1} \laps{s-\beta_k} \varphi\,dx + \int_{\R^n} g_{2} \varphi\,dx. \]
We apply \Cref{th:slighincrease} for $\tilde{t} = s+\beta_{k}$ to this equation, and find that 
\[
 \|\laps{s+\beta_{k+1}} v_2\|_{L^{p_k}(\Omega_{2k})} \aleq \sum_{i=1}^2 \brac{\|f_i\|_{L^2(\R^n)} + \|f_i\|_{L^q(\R^n)}} + \|u\|_{H^{s,2}(\R^n)}.
\]
Since $v_2 \equiv u$ in a neighborhood of $\Omega_{2k+1}$ we conclude that 
\begin{equation}\label{eq:m:est:2454}
 \|\laps{s+\beta_{k+1}} u\|_{L^{p_k}(\Omega_{2k+1})} \aleq \sum_{i=1}^2 \brac{\|f_i\|_{L^2(\R^n)} + \|f_i\|_{L^q(\R^n)}} + \|u\|_{H^{s,2}(\R^n)}.
\end{equation}
If $p_{k+1} = p_k$ we have \eqref{eq:m:skbetap1}. Otherwise, we need to apply this chain of arguments one more time:
This time, we apply \Cref{th:reduction} to $\tilde{s} = s$, $\tilde{t} = s+\beta_{k+1}$, $\tilde{p} = p_{k}$, $\tilde{q} = q_k$, $\tilde{r} = s-\beta_k$ and to the equation \eqref{eq:m:pdesigma} with $\sigma = s+\beta_{k+1}$. Again, \eqref{eq:red:sr} is satisfied since $s+\beta_{k+1} \leq t < 1$, and \eqref{eq:red:rpqt}, \eqref{eq:red:rpq} are satisfied in view of the choice of $\theta$, $q_k$, $p_k$, $\beta_{k+1}$. Since we have \eqref{eq:m:est:2454}, we obtain from \Cref{th:reduction}  $v_3 \in H^{s,2}\cap H^{s+\beta_{k+1}}(\R^n)$, $v_2 \equiv u$ in a neighborhood of $\Omega_{2k+2}$, $g_1,g_2 \in L^{q_k}(\R^n)$ such that for all $\varphi \in C_c^\infty(\R^n)$
\[
 \int_{\R^n}\int_{\R^n}K(x,y)\frac{ (v_2(x)-v_2(y))\, (\varphi(x)-\varphi(y))}{|x-y|^{n+2s}}\, dx\, dy = \int_{\R^n} g_{1} \laps{s-\beta_{k+1}} \varphi\,dx + \int_{\R^n} g_{2} \varphi \,dx.
\]
We apply \Cref{th:slighincrease} for $\tilde{t} = s+\beta_{k+1}$ to this equation, and find that 
\[
 \|\laps{s+\beta_{k+1}} v_3\|_{L^{p_{k+1}}(\Omega_{2k+1})} \aleq \sum_{i=1}^2 \brac{\|f_i\|_{L^2(\R^n)} + \|f_i\|_{L^q(\R^n)}} + \|u\|_{H^{s,2}(\R^n)}.
\]
Since $v_3 \equiv u$ in a neighborhood of $\Omega_{2k+2}$, we finally conclude \eqref{eq:m:skbetap1}.

\underline{Conclusion:}
From the definition of $p_{k+1}$, $q_{k+1}$, $\beta_{k+1}$ as in \eqref{eq:m:pkp1}, \eqref{eq:m:qkp1}, \eqref{eq:m:bkp1} starting from $p_1,q_1,\beta_1$ as in \eqref{eq:m:p1}, \eqref{eq:m:q1}, \eqref{eq:m:b1} we see that there is a large number (depending on $\eps$ and $\theta$, $s$, $t$, and $q$ -- all of which are fixed numbers in this proof) there is a finite number $L \in \N$ such that $p_{L} = q_{L} = q$, $\beta_{L} = t-s$. Thus, from we have \eqref{eq:m:sk} we obtain
\[
 \|\laps{t} u\|_{L^{p}(\Omega_{2L})}  +\|\laps{s} u\|_{L^{p}(\Omega_{2L})} \aleq \sum_{i=1}^2 \brac{\|f_i\|_{L^2(\R^n)} + \|f_i\|_{L^q(\R^n)}} + \|u\|_{H^{s,2}(\R^n)}.
\]
Since $\Omega' \subset \Omega_{2L}$ (see \eqref{eq:omeganest}), and taking into account the arguments from Step 0 of this proof, we conclude.
\end{proof}

\section{Proof of the corollaries of Theorem~\ref{th:main}}\label{s:corollaries}

\Cref{co:main2} is an immediate consequence of \Cref{th:main} and its $H^{t,q}_{loc}$-estimates.

\begin{proof}[Proof of \Cref{co:maindual}]
Let $\Omega'' \subset \subset \Omega$ with $\Omega' \subset \subset \Omega''$. Let $\eta \in C_c^\infty(\Omega)$ with $\eta \equiv 1$ in $\Omega''$. Since $f\in \brac{H^{2s-t,q'}(\Omega)}^\ast$ we have that $\tilde{f}=\eta f \in \brac{H^{2s-t,q'}(\R^n)}^\ast$, since for any $\varphi \in C_c^\infty(\R^n)$,
\[
 \langle \tilde{f},\varphi \rangle := \langle f, \eta \varphi \rangle
\]
Then $u$ is a solution of 
\[
\langle \mathcal{L}^s_{\Omega} u,\varphi\rangle = \langle \tilde{f}, \varphi \rangle \quad \forall \varphi \in C_c^\infty(\Omega'').
\]
Observe that 
\[
\langle \tilde{f},\varphi \rangle  \aleq \|f\|_{\brac{H^{2s-t,q'}(\Omega)}^\ast}\, \|\eta \varphi\|_{H^{2s-t,q'}(\R^n)}
\]
By the fractional Leibniz rule, we also have 
\[
 \|\eta \varphi\|_{H^{2s-t,q'}(\R^n)} \aleq \|\varphi\|_{H^{2s-t,q'}(\R^n)}
\]
Moreover since $q' \leq 2$ and $\eta$ has compact support,
\[
 \|\eta \varphi\|_{H^{2s-t,2}(\R^n)} \aleq \|\varphi\|_{H^{2s-t,2}(\R^n)}.
\]
In view of \Cref{pr:dualclass} we find $f_1,f_2 \in L^{q}\cap L^2(\R^n)$ such that
\[
 \langle \tilde{f}, \varphi \rangle = \int_{\R^{n}}  f_1\laps{2s-t} \varphi\, dx  + \int_{\R^{n}} f_2 \varphi\, dx \quad \forall \varphi \in C_c^\infty(\R^n),
\]
and 
\[
\|f_1\|_{L^q(\R^n)} +  \|f_2\|_{L^q(\R^n)} + \|f_1\|_{L^2(\R^n)} +  \|f_2\|_{L^2(\R^n)}\aleq \|\tilde{f}\|_{\brac{H^{2s-t,q'}(\R^n)}^\ast} \aleq \|f\|_{\brac{H^{2s-t,q'}(\Omega)}^\ast}.
\]
Thus, $u$ is a solution of 
\[
\langle \mathcal{L}^s_{\Omega} u,\varphi\rangle = \int_{\R^{n}}  f_1\laps{2s-t} \varphi\, dx  + \int_{\R^{n}} f_2 \varphi \quad \forall \varphi \in C_c^\infty(\Omega'').
\]
Applying \Cref{th:main} to this equation in $\Omega' \subsubset \Omega''$ we obtain the claim.

\end{proof}

Lastly, we show the following corollary of \Cref{th:main} for equations of the type $\mathcal{L}^{s}_{\Omega} u = \div_{s,\Omega} F$, where $\div_{s}$ denotes a fractional divergence as treated e.g. in \cite{DEG13,MS18}. Observe that \Cref{co:main3} is a direct consequence of \Cref{co:main4} if we set $F(x,y):= \frac{f(x)-f(y)}{|x-y|^s}$.
\begin{corollary}\label{co:main4}
Let  $s\in(0, 1)$ and $p\geq 2$. Let $\Omega \subsubset \R^n$ be a smoothly bounded set, and let $\Omega_1 \subsubset \Omega$ be open. Assume that $u \in W^{s,2}(\Omega)$ satisfies 
\[
 \langle \mathcal{L}^{s}_{\Omega} u, \varphi\rangle = 
 \int_{\Omega}\int_{\Omega} \frac{F(x,y)\, (\varphi(x)-\varphi(y))}{|x-y|^{n+s}}\, dx\, dy 
 \]
 for any $\varphi\in C_c^{\infty}(\Omega)$, where $\mathcal{L}^{s}_{\Omega}$ corresponds to $K\in \mathcal{K}(\alpha, \lambda, \Lambda)$ for some given $\alpha\in (0, 1)$ and $\lambda, \Lambda>0$. Then if for any $t > 0$ we have
 \[
  \int_{\Omega}\int_{\Omega} \frac{|F(x,y)|^p}{|x-y|^{n+tp}}\, dx\, dy < \infty
 \]
then for any $r \in [s,s+t) $ we have $u\in W^{t, p}_{loc}(\Omega)$, and for any $\Omega_1 \subset \Omega$ we have the estimate
\[
 [u]_{W^{r,p}(\Omega_1)} \leq C\, \brac{\brac{\int_{\Omega}\int_{\Omega} \frac{|F(x,y)|^p}{|x-y|^{n+tp}}\, dx\, dy}^{\frac{1}{p}} + \|u\|_{L^2(\Omega)} + [u]_{W^{s,2}(\Omega)}}.
\]
\end{corollary}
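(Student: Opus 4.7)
The plan is to reduce the statement to \Cref{th:main} by representing the bilinear right-hand side as a linear functional of the form $\int f_1\,\laps{2s-r}\varphi + \int f_2\,\varphi$ with $f_1,f_2$ of suitable integrability, and then invoking the main theorem with the parameter $t$ there playing the role of $r$ here. Fix $r \in (s, s+t)$ with $r < \min\{2s,1\}$; the boundary case $r=s$ then follows from the inclusion $W^{r',p}_{loc} \hookrightarrow W^{s,p}_{loc}$ applied to some $r' \in (s,s+t)$ slightly above $s$.

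Set $t' := r-s \in (0,s) \cap (0,t)$ and define the linear functional
\[
T[\varphi] := \int_\Omega \int_\Omega \frac{F(x,y)(\varphi(x)-\varphi(y))}{|x-y|^{n+s}}\,dx\,dy, \qquad \varphi \in C_c^\infty(\R^n).
\]
Applying H\"older with the split $|x-y|^{n+s} = |x-y|^{n/p+t'}\cdot|x-y|^{n/p'+(s-t')}$,
\[
|T[\varphi]| \leq \brac{\int_\Omega\int_\Omega \frac{|F(x,y)|^p}{|x-y|^{n+t'p}}}^{1/p}\, [\varphi]_{W^{s-t',p'}(\Omega)}.
\]
Since $s-t' = 2s-r \in (0,1)$ and $p' \leq 2$, the Triebel--Lizorkin identifications $F^\sigma_{p',p'} = W^{\sigma,p'}$, $F^\sigma_{p',2} = H^{\sigma,p'}$ together with the inclusion $F^\sigma_{p',p'} \subset F^\sigma_{p',2}$ give $[\varphi]_{W^{2s-r,p'}(\R^n)} \lesssim \|\varphi\|_{H^{2s-r,p'}(\R^n)}$; and because $\Omega$ is bounded and $t'\leq t$, the $t'$-weighted integral of $|F|^p$ is controlled by the $t$-weighted one assumed finite. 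Consequently $T$ extends to a bounded linear functional on $H^{2s-r,p'}(\R^n)$, and \Cref{pr:dualclass} produces $f_1,f_2 \in L^p(\R^n)$ with
\[
T[\varphi] = \int_{\R^n} f_1\,\laps{2s-r}\varphi + \int_{\R^n} f_2\,\varphi \quad \forall\,\varphi \in C_c^\infty(\R^n),
\]
and with norms controlled by $\brac{\iint |F|^p/|x-y|^{n+tp}}^{1/p}$.

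To apply \Cref{th:main} we also need $f_1,f_2 \in L^2(\R^n)$, not merely in $L^p(\Omega)$. Because $T$ depends only on values of $\varphi$ in $\Omega$, one may pick a cutoff $\eta \in C_c^\infty(\R^n)$ with $\eta \equiv 1$ on $\Omega$, replace $\varphi$ by $\eta\varphi$ on the dual side, and use the Coifman--McIntosh--Meyer commutator identity $\laps{2s-r}(\eta\varphi) = \eta\,\laps{2s-r}\varphi + [\laps{2s-r},\eta]\varphi$ (the same machinery used in \Cref{la:cutoffarg} and throughout \Cref{s:localglobal}) to absorb the commutator term into $f_2$ and localize $f_1,f_2$ to a compact set; boundedness of their support combined with $p \geq 2$ then automatically gives the $L^2(\R^n)$-integrability. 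With this representation in hand, \Cref{th:main} applied with the parameter $t$ there equal to $r$ and $q=p$, on nested $\Omega_1 \subsubset \Omega_1' \subsubset \Omega$, yields $\laps{r} u \in L^p_{loc}(\Omega)$ with the required estimate in terms of $\|u\|_{W^{s,2}(\Omega)}$ and $\|f_1\|_{L^p} + \|f_2\|_{L^p}$. Finally, the inclusion $H^{r,p}_{loc} \hookrightarrow W^{r,p}_{loc}$ (valid since $p\geq 2$) converts the Bessel-potential bound into the Gagliardo-seminorm bound asserted in \Cref{co:main4}.

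The main technical obstacle is the localization/cutoff step producing compactly-supported $f_1,f_2$ satisfying the identity for every $\varphi \in C_c^\infty(\Omega_1)$ without loss in the sharp Gagliardo estimate: the commutator error from $[\laps{2s-r},\eta]\varphi$ must be handled within the same $L^p$-framework. The remainder is essentially a Hölder plus duality manipulation plus a direct invocation of the main theorem.
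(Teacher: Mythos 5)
Your overall route coincides with the paper's: split the kernel weight $|x-y|^{n+s}$ by H\"older, regard the right-hand side as a bounded functional on a Bessel-potential space, use \Cref{pr:dualclass} to produce $f_1,f_2\in L^p(\R^n)$, apply \Cref{th:main} with target order $r$ (and $q=p$), and convert the resulting bound on $\laps{r}\tilde u$ back into a Gagliardo seminorm. Your extra localization step to secure $f_1,f_2\in L^2(\R^n)$ (which \Cref{th:main} does require) is sound and in fact more explicit than the paper's own treatment, and your restriction $r<\min\{2s,1\}$ is exactly what an application of \Cref{th:main} needs.

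There is, however, one step that is false as written: the claim that $F^{\sigma}_{p',p'}\subset F^{\sigma}_{p',2}$ yields $[\varphi]_{W^{2s-r,p'}(\R^n)}\aleq\|\varphi\|_{H^{2s-r,p'}(\R^n)}$. That inclusion gives the \emph{opposite} norm inequality: for $p'\le 2$ one has $W^{\sigma,p'}\subset H^{\sigma,p'}$, i.e. $\|\varphi\|_{H^{\sigma,p'}}\aleq\|\varphi\|_{W^{\sigma,p'}}$ (the paper states exactly this in the introduction), and the same-smoothness reverse bound fails whenever $p>2$. Hence your H\"older estimate $|T[\varphi]|\aleq \Lambda\,[\varphi]_{W^{2s-r,p'}}$ does not by itself place $T$ in $\brac{H^{2s-r,p'}(\R^n)}^\ast$. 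The repair uses slack you left on the table: since $r<s+t$ strictly, choose $t'\in(r-s,\,t)$, so H\"older controls $T$ by $[\varphi]_{W^{2s-r-\delta,p'}}$ with $\delta=t'-(r-s)>0$, and then the lossy embedding $F^{2s-r}_{p',2}\subset F^{2s-r-\delta}_{p',p'}$ (valid for any microscopic indices once there is a strict gain in smoothness) gives $[\varphi]_{W^{2s-r-\delta,p'}}\aleq\|\varphi\|_{H^{2s-r,p'}}$ --- this is precisely how the paper argues (``for any $r>s-\tilde t$''). The same epsilon of room also covers your final, somewhat glossed, lossless conversion $H^{r,p}_{loc}\hookrightarrow W^{r,p}_{loc}$: the paper instead concludes $[u]_{W^{\tilde r,p}}$ for $\tilde r<r$, which suffices because the admissible range of $r$ in the corollary is open. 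With these adjustments your argument goes through and is essentially the paper's proof.
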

\begin{proof}
Set 
\[
 \Lambda := \brac{\int_{\Omega}\int_{\Omega} \frac{|F(x,y)|^p}{|x-y|^{n+tp}}\, dx\, dy}^{\frac{1}{p}}
\]
Observe that since $\Omega$ is bounded, we have for any $\tilde{t} \in [0,t]$,
\[
 \brac{\int_{\Omega}\int_{\Omega} \frac{|F(x,y)|^p}{|x-y|^{n+\tilde{t}p}}}^{\frac{1}{p}} \aleq \Lambda.
\]

Let $\Omega_2 \subsubset \Omega_3 \subset \R^n$ be an open set such that $\Omega_1 \subsubset \Omega_2 \subsubset \Omega_3 \subsubset \Omega$. Take $\eta \in C_c^\infty(\Omega)$ such that $\eta \equiv 1$ in a neighborhood of $\Omega_3$. Then for any $\varphi \in C_c^\infty(\Omega_3)$,
\[
 \begin{split}
  &\int_{\Omega}\int_{\Omega} \frac{F(x,y)\, (\varphi(x)-\varphi(y))}{|x-y|^{n+s}}\, dx\, dy \\
  =& \int_{\Omega}\int_{\Omega} \frac{F(x,y)\, (\eta(x)\varphi(x)-\eta(y)\varphi(y))}{|x-y|^{n+s}}\, dx\, dy.
 \end{split}
\]
Moreover we have for any $\varphi \in C_c^\infty(\R^n)$, and any $\tilde{t} \in [0,t]$,
\[
\begin{split}
 T[\varphi] :=& \int_{\Omega}\int_{\Omega} \frac{F(x,y)\, (\eta(x)\varphi(x)-\eta(y)\varphi(y))}{|x-y|^{n+s}}\, dx\, dy\\
 \aleq&\Lambda [\varphi]_{W^{s-\tilde{t},p'}(\R^n)}.
 \end{split}
\]
By Sobolev embedding, for any $r > s-\tilde{t}$,
\[
 [\varphi]_{W^{s-\tilde{t},p'}(\R^n)} \aleq \|\varphi\|_{H^{r,p'}(\R^n)}.
\]
That is, $T$ is an element of $(H^{r,p'}(\R^n))^\ast$, and by \Cref{pr:dualclass} we find $f_1, f_2 \in L^{p}(\R^n)$ such that 
\[
 T[\varphi] = \int_{\R^n} f_1 \laps{s} \varphi\,dx + \int_{\R^n} f_2 \varphi\,dx,
\]
with
\[
 \|f_1\|_{L^p(\R^n)}+\|f_2\|_{L^p(\R^n)} \aleq \Lambda.
\]
In particular we have for any $\varphi \in C_c^\infty(\Omega_2)$
\[
 \langle \mathcal{L}^{s}_{\Omega} u, \varphi\rangle = \int_{\R^n} f_1 \laps{s} \varphi\,dx + \int_{\R^n} f_2 \varphi\,dx,
\]
and from \Cref{th:main} we conclude that for any $W^{s,2}$-extension $\tilde{u}: \R^n \to \R$ of $u \Big |_{\Omega}$ we have
\[
 \|\laps{r} \tilde{u}\|_{L^p(\Omega_2)} \aleq \Lambda + [\tilde{u}]_{W^{s,2}(\R^n)}+\|\tilde{u}\|_{L^2(\R^n)}
\]
Again from Sobolev embedding this implies for any $0<\tilde{r} < r$
\[
 [u]_{W^{\tilde{r},p}(\Omega_1)} \aleq \Lambda + [\tilde{u}]_{W^{s,2}(\R^n)}+\|\tilde{u}\|_{L^2(\R^n)},
\]
Since $\Omega$ is an extension domain we can find an extension $\tilde{u}$ such that 
\[
  [\tilde{u}]_{W^{s,2}(\R^n)}+\|\tilde{u}\|_{L^2(\R^n)} \aleq  [u]_{W^{s,2}(\Omega)}+\|u\|_{L^2(\Omega)},
\]
and conclude the theorem.
\end{proof}

\bibliographystyle{abbrv}
\bibliography{bib}

\begin{thebibliography}{10}

\bibitem{ABES17}
P.~{Auscher}, S.~{Bortz}, M.~{Egert}, and O.~{Saari}.
\newblock {Non-local Gehring lemmas in spaces of homogeneous type and
  applications}.
\newblock {\em arXiv e-prints}, Jul 2017.

\bibitem{euclideuclid.tunis/1543854680}
P.~Auscher, S.~Bortz, M.~Egert, and O.~Saari.
\newblock Nonlocal self-improving properties: a functional analytic approach.
\newblock {\em Tunisian J. Math.}, 1(2):151,183, 2019.

\bibitem{Warma}
M.~W. Biccari, Umberto and E.~Zuazua.
\newblock Local elliptic regularity for the dirichlet fractional laplacian.
\newblock {\em Advanced Nonlinear Studies}, 17(2):387--409, 2017.

\bibitem{BRS2}
S.~{Blatt}, P.~{Reiter}, and A.~{Schikorra}.
\newblock {On O'hara knot energies I: Regularity for critical knots}.
\newblock {\em Preprint, arXiv:1905.06064}, 2019.

\bibitem{alma992559020102311}
S.-S. Byun and L.~Wang.
\newblock Elliptic equations with bmo coefficients in reifenberg domains.
\newblock {\em Communications on pure and applied mathematics.},
  57(10):1283,1310, 2004-10.

\bibitem{alma9925334574502311}
L.~A. Caffarelli and I.~Peral.
\newblock On {$W^{1,p}$} estimates for elliptic equations in divergence form.
\newblock {\em Comm. Pure Appl. Math.}, 51(1):1--21, 1998.

\bibitem{CRW}
R.~R. Coifman, R.~Rochberg, and G.~Weiss.
\newblock Factorization theorems for {H}ardy spaces in several variables.
\newblock {\em Ann. of Math. (2)}, 103(3):611--635, 1976.

\bibitem{gale_ofa485955384}
M.~Cozzi.
\newblock Interior regularity of solutions of non-local equations in sobolev
  and nikol'skii spaces.(report).
\newblock {\em Annali di Matematica Pura ed Applicata}, 196(2):555--578,
  2017-04-01.

\bibitem{DEG13}
M.~D'Elia and M.~Gunzburger.
\newblock The fractional {L}aplacian operator on bounded domains as a special
  case of the nonlocal diffusion operator.
\newblock {\em Comput. Math. Appl.}, 66(7):1245--1260, 2013.

\bibitem{DNPV12}
E.~Di~Nezza, G.~Palatucci, and E.~Valdinoci.
\newblock Hitchhiker's guide to the fractional {S}obolev spaces.
\newblock {\em Bull. Sci. Math.}, 136(5):521--573, 2012.

\bibitem{DONG20121166}
H.~Dong and D.~Kim.
\newblock {On Lp-estimates for a class of non-local elliptic equations}.
\newblock {\em Journal of Functional Analysis}, 262(3):1166 -- 1199, 2012.

\bibitem{FJ90}
M.~Frazier and B.~Jawerth.
\newblock A discrete transform and decompositions of distribution spaces.
\newblock {\em J. Funct. Anal.}, 93(1):34--170, 1990.

\bibitem{G19}
N.~Garofalo.
\newblock Fractional thoughts.
\newblock In {\em New developments in the analysis of nonlocal operators},
  volume 723 of {\em Contemp. Math.}, pages 1--135. Amer. Math. Soc.,
  Providence, RI, 2019.

\bibitem{GT01}
D.~Gilbarg and N.~S. Trudinger.
\newblock {\em Elliptic partial differential equations of second order}.
\newblock Classics in Mathematics. Springer-Verlag, Berlin, 2001.
\newblock Reprint of the 1998 edition.

\bibitem{GMF}
L.~Grafakos.
\newblock {\em Modern {F}ourier analysis}, volume 250 of {\em Graduate Texts in
  Mathematics}.
\newblock Springer, New York, third edition, 2014.

\bibitem{IS98}
T.~Iwaniec and C.~Sbordone.
\newblock Riesz transforms and elliptic {PDE}s with {VMO} coefficients.
\newblock {\em J. Anal. Math.}, 74:183--212, 1998.

\bibitem{KMS15}
T.~Kuusi, G.~Mingione, and Y.~Sire.
\newblock Nonlocal self-improving properties.
\newblock {\em Anal. PDE}, 8(1):57--114, 2015.

\bibitem{LS18}
E.~Lenzmann and A.~Schikorra.
\newblock Sharp commutator estimates via harmonic extensions.
\newblock {\em Nonlinear Analysis}, 2018.

\bibitem{MS18}
K.~Mazowiecka and A.~Schikorra.
\newblock Fractional div-curl quantities and applications to nonlocal geometric
  equations.
\newblock {\em J. Funct. Anal.}, 275(1):1--44, 2018.

\bibitem{M63}
N.~G. Meyers.
\newblock An {$L^{p}$}-estimate for the gradient of solutions of second order
  elliptic divergence equations.
\newblock {\em Ann. Scuola Norm. Sup. Pisa Cl. Sci. (3)}, 17:189--206, 1963.

\bibitem{EM75}
N.~G. Meyers and A.~Elcrat.
\newblock Some results on regularity for solutions of non-linear elliptic
  systems and quasi-regular functions.
\newblock {\em Duke Math. J.}, 42:121--136, 1975.

\bibitem{Nowak19}
S.~{Nowak}.
\newblock {$H^{s,p}$ regularity theory for a class of nonlocal elliptic
  equations}.
\newblock {\em Nonlinear Analysis (accepted), arXiv:1906.06190}, 2019.

\bibitem{RS96}
T.~Runst and W.~Sickel.
\newblock {\em Sobolev spaces of fractional order, {N}emytskij operators, and
  nonlinear partial differential equations}, volume~3 of {\em De Gruyter Series
  in Nonlinear Analysis and Applications}.
\newblock Walter de Gruyter \& Co., Berlin, 1996.

\bibitem{S02}
S.~G. Samko.
\newblock {\em Hypersingular integrals and their applications}, volume~5 of
  {\em Analytical Methods and Special Functions}.
\newblock Taylor \& Francis, Ltd., London, 2002.

\bibitem{ArminPhD}
A.~{Schikorra}.
\newblock {Regularity of n/2-harmonic maps into spheres}.
\newblock {\em PhD-thesis, arXiv:1003.0646}, Mar 2010.

\bibitem{S15}
A.~Schikorra.
\newblock Integro-differential harmonic maps into spheres.
\newblock {\em Comm. Partial Differential Equations}, 40(3):506--539, 2015.

\bibitem{S16}
A.~Schikorra.
\newblock Nonlinear commutators for the fractional {$p$}-{L}aplacian and
  applications.
\newblock {\em Math. Ann.}, 366(1-2):695--720, 2016.

\bibitem{S18Arma}
A.~Schikorra.
\newblock Boundary equations and regularity theory for geometric variational
  systems with {N}eumann data.
\newblock {\em Arch. Ration. Mech. Anal.}, 229(2):709--788, 2018.

\bibitem{SSS15}
A.~Schikorra, T.-T. Shieh, and D.~Spector.
\newblock {$L^p$} theory for fractional gradient {PDE} with {$VMO$}
  coefficients.
\newblock {\em Atti Accad. Naz. Lincei Rend. Lincei Mat. Appl.},
  26(4):433--443, 2015.

\bibitem{SS15}
T.-T. Shieh and D.~E. Spector.
\newblock On a new class of fractional partial differential equations.
\newblock {\em Adv. Calc. Var.}, 8(4):321--336, 2015.

\bibitem{SS18}
T.-T. Shieh and D.~E. Spector.
\newblock On a new class of fractional partial differential equations {II}.
\newblock {\em Adv. Calc. Var.}, 11(3):289--307, 2018.

\bibitem{alma996965690102311}
E.~M. Stein.
\newblock Singular integrals and differentiability properties of functions,
  1970.

\bibitem{Uchiyama}
A.~Uchiyama.
\newblock On the compactness of operators of {H}ankel type.
\newblock {\em T\^{o}hoku Math. J. (2)}, 30(1):163--171, 1978.

\end{thebibliography}

\end{document}